\newtheorem{lemma}[equation]{Lemma}
\newtheorem{theorem}[equation]{Theorem}
\newtheorem*{theorem*}{Theorem}
\newtheorem{proposition}[equation]{Proposition}
\newtheorem{corollary}[equation]{Corollary}
\theoremstyle{definition}
\newtheorem{definition}[equation]{Definition}
\newtheorem{remark}[equation]{Remark}
\newtheorem{example}[equation]{Example}
\numberwithin{equation}{section}
\newcommand{\hooklongrightarrow}{\lhook\joinrel\longrightarrow}
\DeclareMathOperator{\im}{im}
\newcommand{\nubar}{\overline{\nu}}
\newcommand{\mubar}{\overline{\mu}}
\newcommand{\phibar}{\overline{\varphi}}
\newcommand{\pbar}{\overline{p}}
\newcommand{\Xbar}{\overline{X}}
\newcommand{\TM}{\mathbb{T}M}
\newcommand{\T}{\mathbb{T}}
\newcommand{\G}{\mathcal{G}}
\newcommand{\HH}{\mathcal{H}}
\renewcommand{\O}{\mathcal{O}}
\newcommand{\s}{\subset}
\renewcommand{\L}{\mathcal{L}}
\newcommand{\too}{\longrightarrow}
\newcommand{\mtoo}{\longmapsto}
\newcommand{\C}{\mathbb{C}}
\newcommand{\g}{\mathfrak{g}}
\renewcommand{\t}{\mathfrak{t}}
\renewcommand{\c}{\mathfrak{c}}
\newcommand{\h}{\mathfrak{h}}
\newcommand{\z}{\mathfrak{z}}
\newcommand{\uu}{\mathfrak{u}}
\newcommand{\p}{\mathfrak{p}}
\newcommand{\Z}{\mathfrak{Z}}
\renewcommand{\l}{\mathfrak{l}}
\newcommand{\bh}{\overline{\h}}
\newcommand{\bH}{\overline{H}}
\newcommand{\D}{\mathbf{D}}
\DeclareMathOperator{\Id}{Id}
\DeclareMathOperator{\Ad}{Ad}
\DeclareMathOperator{\ad}{ad}
\newcommand{\sll}[1]{\mkern-4mu\mathbin{/\mkern-5mu/}_{\mkern-4mu{#1}}}
\newcommand{\tto}{{\;\substack{\too\\[-1em] \too}\;}}
\newcommand{\sss}{\mathsf{s}}
\newcommand{\ttt}{\mathsf{t}}
\DeclareMathOperator{\Lie}{Lie}
\newcommand{\Gad}{G_{\ad}}
\newcommand{\tmu}{\widetilde{\mu}}
\newcommand{\tm}{\widetilde{m}}
\newcommand{\tn}{\widetilde{n}}
\newcommand{\ts}{\tilde{s}}
\title{Reduction along strong Dirac maps}
\author[Ana B\u{a}libanu]{Ana B\u{a}libanu}
\address[Ana B\u{a}libanu]{Department of Mathematics, Louisiana State University, 494 E. Parker Blvd., Baton Rouge, LA  70803, USA}
\email{ana@math.lsu.edu}
\author[Maxence Mayrand]{Maxence Mayrand}
\address[Maxence Mayrand]{D\'epartement de math\'ematiques, Universit\'e de Sherbrooke, 2500 Bd de l'Universit\'e, Sherbrooke, QC, J1K 2R1, Canada}
\email{maxence.mayrand@usherbrooke.ca}
\date{}\date{}
\def\l@subsection{\@tocline{2}{0pt}{3pc}{5pc}{}}
\begin{document}
\maketitle

\begin{abstract}
We develop a general procedure for reduction along strong Dirac maps, which are a broad generalization of Poisson momentum maps. We recover a large number of familiar constructions in Poisson and quasi-Poisson geometry, and we introduce new examples of Poisson, quasi-Poisson, and Dirac reduced structures. In particular, we obtain quasi-Poisson analogues of several classes of spaces that are studied in geometric representation theory.
\end{abstract}

\tableofcontents

\section*{Introduction}
If $\g$ is a Lie algebra whose dual is equipped with the Kostant--Kirillov--Souriau Poisson structure, any Poisson map $\Phi:(M,\pi_M)\too(\g^*,\pi_{KKS})$ induces an infinitesimal action of $\g$ on $M$ via
\begin{align*}
\g&\too TM\\
\xi&\mtoo \pi_M^\#(\Phi^*\xi).
\end{align*}
This action is Hamiltonian, and $\Phi$ is its moment map. According to the Marsden--Weinstein Theorem \cite{mar.wei:74}, we may use this moment map to ``reduce'' the symmetries of the manifold $M$ at any point $x$ of $\g^*$ which is a regular value by taking the quotient 
\[\Phi^{-1}(x)/\g_x\]
with respect to the action of the isotropy Lie algebra $\g_x$. If this quotient is a manifold, it inherits a Poisson structure from $M$.

If we replace the space $\g^*$ by an arbitrary Poisson manifold $(X,\pi_X)$, the graph $L$ of $\pi_X^\#$ is a Lie algebroid. Any Poisson map $\Phi:(M,\pi_M)\too (X,\pi_X)$ induces a Lie algebroid action of $L$ on $M$, and for any point $x$ of $X$ the isotropy Lie algebra $L_{x}$ acts on the fiber of $\Phi$ above $x$. If $x$ is a regular value and the reduced space
\[\Phi^{-1}(x)/L_x\]
is a manifold, reduction equips it with a natural Poisson structure.

More generally, when the space $(X,L_X)$ is Dirac, the correct analogue of a moment map is a \emph{strong Dirac map}, a notion first studied by Bursztyn and Crainic \cite{bur.cra:05}. A strong Dirac map is a forward-Dirac map $\Phi:(M,L_M)\too (X,L_X)$ which restricts to an isomorphism between the kernels of the Dirac structures. Such a map induces an action of the Lie algebroid $L_X$ on $M$, and at any regular value $x$ of $\Phi$ we can form the reduced space
\[\Phi^{-1}(x)/L_{X,x}.\]
If it is a manifold, the Dirac pullback of $L_M$ to the fiber $\Phi^{-1}(x)$ descends to a Poisson structure on this reduced space.

{We develop a generalization of this procedure in which reduction takes place along a submanifold and the quotient is by the action of a Lie algebroid or Lie groupoid. Concretely, for any Dirac manifold $(Z,L_Z)$ we introduce the notion of a \emph{$Z$-level} or \emph{relative reduction level} of $X$---a generalized submanifold $(S,\gamma)$ of $X\times \overline{Z}$ which satisfies certain compatibility conditions. Each $Z$-level is equipped with a \emph{stabilizer subalgebroid}
\[B_{S,\gamma}\subset L_X,\]
and we prove the following main theorem, as Theorem \ref{main}.}

{\begin{theorem*}
Let $\Phi:(M,L_M)\too (X,L_X)$ be a strong Dirac map, and suppose that $(S,\gamma)$ is a $Z$-level of $X$ which meets $\Phi$ cleanly. If the quotient 
\[(M\times_XS)/B_{S,\gamma}\]
is a manifold, it inherits from $M$ a natural Dirac structure, and the second projection descends to a strong Dirac map
\[\pbar:(M\times_XS)/B_{S,\gamma}\too Z.\]
\end{theorem*}}

\noindent In other words, the symmetries induced on $M$ by the action of $L_X$ are reduced to $L_Z$-symmetries on the quotient space $(M\times_XS)/B_{S,\gamma}$. In the special case when $Z$ is a point, all symmetries are removed and the resulting Dirac structure on the reduced space is Poisson{---reduction levels in this case can therefore be viewed as pre-Poisson submanifolds of the Dirac manifold $X$.}

When $(M, L_M)$ is presymplectic and $Z$ is a point, we also give an interpretation of this theorem in shifted symplectic geometry \cite{pan.toe.vaq.vez:13} as an intersection of two Lagrangians in a 1-shifted symplectic stack. Namely, if $(X, L_X)$ integrates to a twisted presymplectic groupoid $\G \tto X$ and the Lie subalgebroid $B_{S, \gamma}$ to a subgroupoid $\HH \tto S$, then the maps of quotient stacks $[S/\HH] \too [X/\G]$ and $[M/\G] \too [X/\G]$ have Lagrangian structures. Their fiber product is the reduction $\Phi^{-1}(S)/\HH$, which is therefore 0-shifted symplectic.

Many familiar reduction procedures fall under the scope of our main theorem. Among them are Marsden--Weinstein reduction at a point and along a coadjoint orbit \cite{mar.wei:74}, parabolic and unipotent reduction for the action of a complex reductive group, and the reduction of quasi-Hamiltonian and quasi-Poisson manifolds introduced by Alekseev, Malkin, and Meinrenken \cite{ale.mal.mei:98} and by Alekseev, Kosmann-Schwarzbach, and Meinrenken \cite{ale.kos.mei:02}. When $M$ is Hamiltonian or quasi-Hamiltonian for the action of a compact Lie group, our theorem recovers symplectic implosion as defined by Guillemin, Jeffrey, and Sjamaar \cite{gui.jef.sja:02}, as well as its quasi-Hamiltonian counterpart introduced by Hurtubise, Jeffrey, and Sjamaar \cite{hur.jef.sja:06}. Moreover, in the case when $M$ is a symplectic manifold, our results can be specialized to the procedure of symplectic reduction along a submanifold which was studied by the second named author together with Crooks \cite{cro.may:21}.

In addition, this theorem provides a Dirac perspective on several constructions in Poisson and quasi-Poisson geometry. By taking fiber products along strong Dirac maps, we give a new interpretation of the quasi-Hamiltonian reduction of a fusion product in terms of Dirac reduction along a diagonal. We also show that our reduction procedure satisfies a kind of ``universality'' that already appears in symplectic and quasi-Hamiltonian implosion---any reduction of $M$ can be obtained by first reducing the presymplectic groupoid of $X$ and then taking a fiber product.

Motivated by examples from geometric representation theory, we give several applications to quasi-Poisson geometry that rely on the structure of semisimple and reductive groups. We recover a special case of quasi-Poisson reduction relative to a subgroup, which was developed by Li-Bland and \v{S}evera \cite{lib.sev:15} and by \v{S}evera \cite{sev:15}. Specializing to the case when $G$ is a semisimple complex group, we also introduce a quasi-Poisson analogue of Whittaker reduction, a type of Hamiltonian reduction which was first used by Kostant \cite{kos:79}. Because it can be viewed as a type of unipotent reduction at a nilpotent character, it plays a major role in many areas of representation theory and mathematical physics. We construct its multiplicative counterparts in Theorem \ref{whittaker} using generalizations of the Steinberg cross-section \cite{ste:65} defined by He and Lusztig \cite{lus.he:12} and by Sevostyanov \cite{sev:11}, extending previous results of the first named author in \cite{bal:21}. 

\begin{theorem*}
Fix a Weyl group element $w$, let $U$ be the unipotent subgroup associated to its normal representative $\dot{w}$, and let $Z$ be the centralizer of $\dot{w}$ in the corresponding Levi. Write $\Sigma$ for the slice associated to $w$ and $\Theta$ for its $U$-saturation. If $\Phi:M\too G$ is a quasi-Poisson moment map, then 
\[\Phi^{-1}(\Sigma)\cong\Phi^{-1}(\Theta)/U\]
is a Hamiltonian quasi-Poisson $Z$-manifold.
\end{theorem*} 

We also apply our main theorem to develop quasi-Hamiltonian versions of the open Moore--Tachikawa spaces. These are manifolds which were introduced by Ginzburg and Kazhdan \cite{gin.kaz:21} as a first step towards the construction of the Moore--Tachikawa functor \cite{moo.tac:12}---a conjectural 2-dimensional topological quantum field theory valued in a category of holomorphic symplectic varieties. The multiplicative analogue of this conjecture requires a functor which takes values in a category of quasi-Hamiltonian varieties. We construct smooth open submanifolds of these varieties, and we show in Theorem \ref{mooretac} that they satisfy the gluing laws predicted by the Moore--Tachikawa conjecture.

\begin{theorem*}
Let $\Sigma_\Delta$ be the diagonal Steinberg cross-section of the $n$-fold product $G^n$, let $\mathcal{Z}$ be the multiplicative universal centralizer of $G$, and let $\HH$ be the kernel of the $n$-fold multiplication map
\[\mathcal{Z}\times_\Sigma\ldots\times_\Sigma\mathcal{Z}\too\mathcal{Z}.\]
Then $\Sigma_\Delta$ is a reduction level of $G^n$ with stabilizer subgroupoid $\HH$, and the reduction $\mathfrak{Z}_n$ of the fusion double $G^n\times \Gad^n$ at $\Sigma_\Delta$ is a quasi-Hamiltonian $G^n$-manifold. There is a natural isomorphism of quasi-Hamiltonian manifolds
\[\mathfrak{Z}_{m+n-2}\cong(\mathfrak{Z}_m\circledast\mathfrak{Z}_n)\sll{1}G.\]
\end{theorem*} 

\subsection*{Outline}
In Section \ref{first} we recall some background on Dirac geometry, including strong Dirac maps and quasi-Poisson spaces. We also review some technical conditions under which a Dirac structure can be pushed forward along a quotient map. 

In Section \ref{second} we introduce the notion of a reduction level, and we prove our main theorem on Dirac reduction. We end this section by giving many familiar examples of reduction in Poisson, quasi-Poisson, and Dirac geometry that fit into the framework of the main theorem.

In Section \ref{third} we show that products of strong Dirac maps can be reduced along the diagonal. We use this perspective to give Dirac interpretations of several reduction procedures in Poisson and quasi-Poisson geometry, including the quasi-Hamiltonian reduction of a fusion product. 

In Section \ref{qpsec} we apply our results to the quasi-Poisson framework to define multiplicative analogues of a number of constructions in geometric representation theory. We introduce multiplicative versions of Whittaker reduction and of the open Moore--Tachikawa varieties.

In Section \ref{fifth} we give a global counterpart of our main result in the case when the strong Dirac map is a presymplectic realization. This extends our main reduction theorem to quotients by groupoids which are not necessarily source-connected. We also explain how these results can be interpreted in terms of shifted symplectic geometry.

\subsection*{Acknowledgments}
We would like to thank Pavol \v{S}evera for making us aware of his work with David Li-Bland on quasi-Poisson reduction, and Henrique Bursztyn, Alejandro Cabrera, Rui Fernandes, Eckhard Meinrenken, and Marco Zambon for their interesting and constructive comments. We are also grateful to the anonymous referee for many thoughtful suggestions that improved this paper. During the completion of this work, A.B. was partially supported by a National Science Foundation MSPRF under award number DMS--1902921, and M.M. by a Discovery Grant (RGPIN-2023-04587) of the Natural Sciences and Engineering Research Council of Canada (NSERC).

%
%
%
%
%
%
%
\section{Dirac geometry}
\label{first}
We begin by establishing some conventions on Dirac manifolds, and we refer to \cite{bur:13, cra.fer.mar:21, gua:11, mei:18, sev.wei:01} for more details. In Section \ref{1.2} we review the notion of strong Dirac map, and in Section \ref{qpintro} we recall the particular example of quasi-Poisson manifolds. In Section \ref{1.3} we give some technical conditions under which Dirac structures can be pushed forward along surjective submersions.

%
%
%
%
%
%
%
\subsection{Strong Dirac maps}
\label{1.2}
Let $(M,L_M)$ and $(N, L_N)$ be (real or complex) Dirac manifolds twisted by closed $3$-forms $\eta_M$ and $\eta_N$ respectively. A smooth map $f:M\longrightarrow N$ is a \emph{strong Dirac map} if it is forward-Dirac and satisfies 
\begin{equation}
\label{strong}
\Phi^*\eta_N=\eta_M\quad\text{and}\quad \ker \Phi_*\cap \ker L_M=0.
\end{equation}
Such maps were first studied in \cite{bur.cra:05}, where they were called \emph{Dirac realizations}, and subsequently in \cite{ale.bur.mei:09}, where the current name was introduced.

In particular, $\Phi$ is a strong Dirac map if and only if it induces an isomorphism between $\ker L_M$ and $\ker L_N$. This condition implies that, for any $(v,\alpha)\in L_N$, there exists a unique $w\in TM$ such that
\[\Phi_*w=v\quad\text{and}\quad (w,\Phi^*\alpha)\in L_M.\]
Therefore, we obtain a Lie algebroid action \cite[Corollary 3.12]{bur.cra:05} of $L_N$ on $M$ via
\begin{align*}
\rho_M:L_N\times_NM&\longrightarrow TM\\
		(v,\alpha)\,\,\,&\longmapsto \,\,w.
\end{align*}

\begin{example}
\cite[Example 2.4]{bur.cra:09} Let $N$ be a Poisson manifold and let $\Phi:M\longrightarrow N$ be a strong Dirac map. Because $\Phi$ intertwines the $3$-form twists and induces an isomorphism on Dirac kernels, $M$ is also a Poisson manifold and therefore $\Phi$ is a Poisson map. It follows that strong Dirac maps to $N$ are in one-to-one correspondence with Poisson maps to $N$. 

In particular, let $N=\mathfrak{g}^*$ be the dual of a Lie algebra, equipped with the canonical Kirillov--Kostant--Souriau Poisson structure. Any Poisson map $\Phi:(M,\pi_M)\longrightarrow (\mathfrak{g}^*,\pi_{\mathrm{KKS}})$ is a strong Dirac map, so it induces a $\mathfrak{g}$-action
\begin{align*}
\rho_M:\g&\longrightarrow TM\\
		\xi&\longmapsto \pi_M^\#( \Phi^*(\xi)).
		\end{align*}
With respect to this action $\Phi$ is a moment map---in other words, strong Dirac maps to $(\mathfrak{g}^*,\pi_{\mathrm{KKS}})$ are in one-to-one correspondence with Hamiltonian Poisson $\mathfrak{g}$-spaces.
\end{example}

\begin{example}
\cite[Theorem 3.16]{bur.cra:05}
If $G$ is a  Lie group whose Lie algebra $\g$ carries an invariant, nondegenerate, symmetric bilinear form, we can consider quasi-Poisson structures with respect to the action of $G$, as we recall in the next section. In this setting, any quasi-Poisson moment map is a strong Dirac map \cite[Proposition 3.19]{bur.cra:05}. Conversely, any strong Dirac map from $(M,L_M)$ to $(G,\pi_G)$ induces on $M$ an action of $\mathfrak{g}$ with respect to which the Dirac structure $L_M$ is quasi-Poisson, and whose associated moment map is $\Phi$ \cite[Proposition 3.20]{bur.cra:05}. Therefore, strong Dirac maps to $(G, L_G)$ are in one-to-one correspondence with Hamiltonian quasi-Poisson $\mathfrak{g}$-spaces.
\end{example}

%
%
%
%
%
%
%
\subsection{Quasi-Poisson manifolds}
\label{qpintro}
In this section we review Dirac structures associated to quasi-Poisson bivectors. Let $\mathfrak{g}$ be a Lie algebra equipped with an invariant, nondegenerate, symmetric bilinear form, let $G$ be a Lie group that integrates it, and let $\eta_G$ be the bi-invariant Cartan $3$-form on $G$. Under the isomorphism between $\g$ and $\g^*$ given by the inner product, the Cartan $3$-form corresponds to a $3$-tensor $\chi\in\wedge^3\mathfrak{g}.$

If $\g$ acts on a manifold $M$ via
\begin{align*}
\g&\longrightarrow TM \\
\xi&\longmapsto \xi_M,
\end{align*}
a \emph{quasi-Poisson} structure on $M$ is a $\g$-invariant bivector $\pi$ with the property that
\[[\pi,\pi]=\chi_M.\]
This bivector is \emph{Hamiltonian} if there exists a group-valued moment map
\[\Phi:M\longrightarrow G,\]
equivariant relative to the conjugation action of $G$ on itself, which satisfies a multiplicative analogue of the usual moment map condition \cite[Definition 2.2]{ale.kos.mei:02}. 

By \cite[Theorem 3.16]{bur.cra:05}, the Hamiltonian quasi-Poisson structure $\pi$ corresponds to the $-\Phi^*\eta_G$-twisted Dirac structure
\begin{equation}
\label{qpdirac}
L=\{(\pi^\#(\alpha)+\xi_M,C^*(\alpha)+\Phi^*\sigma(\xi))\mid\alpha\in T^*M, \xi\in\g\}.
\end{equation}
Here 
\begin{align*}
\sigma:\g&\too T^*G\\
		\xi&\longmapsto \tfrac{1}{2}(\xi^L+\xi^R)^\vee,
		\end{align*}
where $x^\vee$ is the dual of $x\in TG$ under the identification $TG\cong T^*G$ given by the nondegenerate bilinear form, and 
\[C^*=\Id-\frac{1}{4}e_{iM}\otimes\Phi^*(\theta_i^L-\theta_i^R),\]
where $\{e_i\}$ is an orthonormal basis of $\g$ and $\theta_i^L$ and $\theta_i^R$ are the corresponding left- and right-invariant $1$-forms on $G$. In particular, the projection of $L$ onto $TM$ is the image of the bundle map
\begin{align*}
T^*M\oplus\mathfrak{g}&\longrightarrow TM \\
		(\alpha,\xi)&\longmapsto\pi^\#(\alpha)+\xi_M.
		\end{align*}
The foliation integrating this distribution is the foliation of $M$ by \emph{quasi-Hamiltonian} leaves.

\begin{example}
\label{cd}
The group $G$ has a natural quasi-Poisson structure relative to the conjugation action \cite[Proposition 3.1]{ale.kos.mei:02}, which is called the \emph{Cartan--Dirac structure}. It can be viewed as a multiplicative analogue of the classical Kirillov--Kostant--Souriau Poisson structure on $\g^*$. Its moment map is the identity, and the associated Dirac structure is
\begin{equation}
\label{cdexp}
L_G\coloneqq\left\{\left(\xi^L-\xi^R,\sigma(\xi)\right)\mid\xi\in\g\right\}.
\end{equation}
Projecting onto the tangent bundle, the nondegenerate leaves of the Cartan--Dirac structure are the conjugacy classes of $G$. In particular,  there is a natural isomorphism 
\[L_G\cong G\rtimes\g\]
between the Cartan--Dirac structure and the action Lie algebroid on $G$ corresponding to the conjugation action.
\end{example}

\begin{example}
\label{fusdouble}
The quasi-Hamiltonian \emph{internal fusion double} $D(G)\coloneqq G\times G$ \cite[Example 5.3]{ale.kos.mei:02} is a nondegenerate quasi-Poisson $G\times G$-manifold with respect to the action
\[(g,h)\cdot (a,b)=(gah^{-1},hbh^{-1}).\]
The associated moment map is given by 
\begin{align*}
\Phi:D(G)&\longrightarrow G\times G\\
	(a,b)&\longmapsto (aba^{-1},b^{-1}),
	\end{align*}
and the quasi-Poisson structure on $D(G)$ is a multiplicative counterpart of the canonical symplectic structure on the cotangent bundle of $T^*G$. Viewed as a groupoid over $G$ with source and target maps given by the two components of $\Phi$, the double $D(G)$ is a presymplectic groupoid integrating the Cartan--Dirac structure $L_G$.
\end{example}

%
%
%
%
%
%
%
\subsection{Quotients of Dirac structures}
\label{1.3}
Let $(M, L_M)$ be an $\eta_M$-twisted Dirac manifold. A local vector field $Z$ on $M$ is \emph{Dirac} if its flow consists of local Dirac diffeomorphisms. This occurs if and only if the Lie derivative $\L_Z$ preserves the sheaf of sections of $L_M$ and the $3$-form $\L_Z\eta_M$ vanishes along the presymplectic leaves of $M$. A distribution $D \subseteq TM$ is called a \emph{Dirac distribution} if every vector in $D$ extends to a Dirac local section of $D$, and a Dirac distribution $D$ is called \emph{regular} if $D \cap L_M$ has constant rank.

\begin{remark}
\label{mzq8z1k7}
Suppose that $D \s TM$ is a distribution contained in $L_M \cap \ker \eta_M$. If $Z$ is a local section of $D$, then 
\[ \llbracket (Z, 0), (v, \alpha)\rrbracket=(\L_Zv, \L_Z\alpha) \qquad\text{for all }(v,\alpha)\in L_M.\]
Since $L_M$ is closed under the Dorfman bracket, $Z$ is a Dirac vector field and therefore $D$ is a regular Dirac distribution.
\end{remark}

Given an involutive distribution $D$ on $M$, we will say that its leaf space $M/D$ ``is a manifold'' if it has the structure of a smooth manifold such that the quotient map is a smooth submersion. The next proposition shows that in this case, if $D$ is a regular Dirac distribution and compatible with $\eta_M$, then the Dirac structure on $M$ pushes forward to a Dirac structure on $M/D$.

\begin{proposition}
\label{bgfkcgj1}
Let $D \subseteq TM$ be an involutive regular Dirac distribution contained in $\ker \eta_M$, and suppose that the leaf space $Q \coloneqq M/D$ is a manifold with quotient map $\pi : M \longrightarrow Q.$
\begin{enumerate}[label=\textup{(\arabic*)}]
\item\label{tpcp6ia5}
The pushforward 
\[L_Q \coloneqq \pi_*L_M\]
is an $\eta_Q$-twisted Dirac structure on $Q$, where $\eta_Q$ is uniquely characterized by the property that $\pi^*\eta_Q = \eta_M$.
\item\label{15i0o7u8}
If $(Z,L_Z)$ is an $\eta_Z$-twisted Dirac manifold and $\varphi : (M,L_M) \too (Z, L_Z)$ is an f-Dirac map such that $D$ is contained in the kernel of $\varphi_*$, then $\varphi$ descends to an f-Dirac map 
\[\phibar : (Q,L_Q) \too (Z,L_Z).\]
Moreover, if $D$ contains $\ker \varphi_* \cap L_M$ and if $\varphi^*\eta_Z=\eta_M$, then $\phibar$ is a strong Dirac map.
\item\label{gpxy5jhi}
If $D$ is contained in the kernel of $L_M$, then the f-Dirac map 
\[\pi : (M,L_M) \too (Q, L_Q)\]
is also b-Dirac.
\item\label{r0dvmlu4}
If $D$ is contained in the kernel of $L_M$, the nondegenerate leaves of $Q$ are the images of nondegenerate leaves of $M$. In particular, if $M$ is nondegenerate and $L_M = L_\omega$ is the graph of a $2$-form $\omega$, then $L_Q$ is also nondegenerate, and the corresponding 2-form $\overline{\omega}$ is uniquely characterized by the property that $\pi^*\overline{\omega} = \omega$. 
\end{enumerate}
\end{proposition}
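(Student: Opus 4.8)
The plan is to establish the four parts in order, doing essentially all the work in \ref{tpcp6ia5}. I begin with the twist form. Since $D = \ker d\pi$ and $D \subseteq \ker\eta$, every local section $Z$ of $D$ has $\imath_Z\eta = 0$, hence $\L_Z\eta = d\imath_Z\eta + \imath_Z d\eta = 0$ by Cartan's formula and $d\eta = 0$; thus $\eta$ is horizontal and flow-invariant for the submersion $\pi$ with connected fibres, so it descends to a $\theta \in \Omega^3(Q)$ with $\pi^*\theta = \eta$, which is closed and unique since $\pi^*$ is injective on forms.

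For \ref{tpcp6ia5} I would first check that $L_M$ is $\pi$-invariant. As $D$ is a Dirac distribution it is locally spanned by Dirac vector fields, whose flows are local Dirac automorphisms by Proposition \ref{mplljqhp}; concatenating these flows connects any two points of a $\pi$-fibre, and the induced automorphisms of $\mathbb{T}M$ preserve $L_M$ while commuting with $\pi_*$, so $\pi_*L_{M,p}$ depends only on $\pi(p)$. Each $\pi_*L_{M,p}$ is manifestly isotropic, and has the correct dimension by a linear-algebra computation: writing $D_p\subseteq\mathbb{T}_pM$ for $\{(z,0): z\in D_p\}$, so that its orthogonal for the symmetric pairing is $D_p^\perp = T_pM\oplus\operatorname{Ann}(D_p)$, the canonical surjection $L_{M,p}\cap D_p^\perp \twoheadrightarrow \pi_*L_{M,p}$, $(v,\pi^*\beta)\mapsto(d\pi(v),\beta)$, has kernel $L_{M,p}\cap D_p$, and the Lagrangian identity $\dim(L\cap W^\perp) = \dim M - \dim W + \dim(L\cap W)$ then gives $\dim\pi_*L_{M,p} = \dim M - \dim D_p = \dim Q$. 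Hence $L_Q\coloneqq\pi_*L_M$ is pointwise Lagrangian.

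\textbf{The main obstacle is smoothness} of $L_Q$. The dimension count together with regularity (constancy of $\dim(L_M\cap D)$) shows that $L_M\cap D^\perp$ is a smooth subbundle of $\mathbb{T}M$; the bundle map over $\pi$ sending $(v,\pi^*\beta)$ to $(d\pi(v),\beta)$ on $L_M\cap D^\perp$ is fibrewise onto $L_Q$ with constant-rank kernel $L_M\cap D$, so pulling back along a local section of $\pi$ exhibits $L_Q$ locally as the image of a smooth subbundle under a constant-rank bundle map, hence as a smooth subbundle of $\mathbb{T}Q$. The same computation yields the pointwise identity $\pi^*L_Q = (L_M\cap D^\perp) + D$, which I reuse below. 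For integrability, every local section of $L_Q$ lifts through the above bundle map to a $\pi$-related section of $L_M$; since $\pi^*\theta = \eta$, $\pi$-related sections have $\pi$-related $\eta$-twisted Dorfman brackets, so the bracket of two sections of $L_Q$ is $\pi$-related to a section of $L_M$ and therefore takes values in $\pi_*L_M = L_Q$. (Alternatively one may cite the standard fact that a smooth pushforward of a Dirac structure along a surjective submersion with connected fibres is a Dirac structure.)

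It remains to treat \ref{15i0o7u8}--\ref{r0dvmlu4}. For \ref{15i0o7u8}: since $D = \ker d\pi \subseteq \ker f_*$ and $\pi$ has connected fibres, $f$ descends to a smooth map $\bar f$ with $f = \bar f\circ\pi$, and $\bar f_*L_Q = \bar f_*\pi_*L_M = f_*L_M = L_X$ by functoriality of the pushforward ($L_Q$ being $\bar f$-invariant because $L_M$ is $f$-invariant), so $\bar f$ is f-Dirac; if moreover $\ker f_*\cap L_M\subseteq D$, then for $w\in\ker d\bar f$ with $(w,0)\in L_Q$, a lift $(v,0)\in L_{M,p}$ with $d\pi(v)=w$ satisfies $v\in\ker L_M$ and $f_*v = d\bar f(w) = 0$, so $(v,0)\in\ker f_*\cap L_M\subseteq D$ and $w = d\pi(v) = 0$; hence $\ker\bar f_*\cap L_Q = 0$ and $\bar f$ is a Dirac realization. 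For \ref{gpxy5jhi}: if $D\subseteq\ker L_M$ then $D\subseteq L_M$, and since $L_M$ is Lagrangian this forces $L_M\subseteq D^\perp$, so the identity above gives $\pi^*L_Q = (L_M\cap D^\perp)+D = L_M$, i.e.\ $\pi$ is b-Dirac. For \ref{r0dvmlu4}: still assuming $D\subseteq\ker L_M$, every $(v,\alpha)\in L_M$ has $\alpha\in\operatorname{Ann}(D)$, hence $\alpha = \pi^*\beta$ and $(d\pi(v),\beta)\in L_Q$, while conversely every element of $L_Q$ lifts to $L_M$; thus $p_T(L_Q) = d\pi\bigl(p_T(L_M)\bigr)$, so the characteristic distributions of $L_M$ and $L_Q$ are $\pi$-related, and path-lifting along $\pi$ shows that each nondegenerate leaf of $M$ maps onto a nondegenerate leaf of $Q$, every such leaf of $Q$ arising this way. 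In particular, if $L_M = L_\omega$ then $p_T(L_M) = TM$, so $p_T(L_Q) = TQ$ and $L_Q = L_{\bar\omega}$ for some $\bar\omega\in\Omega^2(Q)$; then $\pi$ being b-Dirac gives $L_\omega = \pi^*L_{\bar\omega} = L_{\pi^*\bar\omega}$, whence $\omega = \pi^*\bar\omega$, and $\bar\omega$ is unique because $\pi^*$ is injective on forms.
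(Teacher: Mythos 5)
Your proof is correct, and for the crux of part \ref{tpcp6ia5} --- establishing $\pi$-invariance of $L_M$ by connecting points of a fibre with flows of Dirac local sections of $D$ and invoking Proposition \ref{mplljqhp} --- it coincides with the paper's argument. Where you genuinely diverge is that the paper, once $\pi$-invariance is known, delegates the Lagrangian/smoothness/integrability of $\pi_*L_M$ to a citation (Proposition 1.13 of Bursztyn's notes), whereas you prove it from scratch: the Lagrangian identity $\dim(L\cap W^\perp)=\dim M-\dim W+\dim(L\cap W)$ gives the dimension count, regularity of $D$ gives smoothness of $L_M\cap D^\perp$ and hence of its constant-rank image, and $\pi$-related lifts give closure under the Dorfman bracket. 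This makes the proof self-contained at the cost of length. Your identity $\pi^*L_Q=(L_M\cap D^\perp)+D$ is a nice organizing device: it makes part \ref{gpxy5jhi} a one-line consequence of $L_M\subseteq D^\perp$ (the paper instead repeats the element-wise decomposition $(u,\pi^*\alpha)=(v,\pi^*\alpha)+(u-v,0)$, which is the same computation in disguise). In part \ref{r0dvmlu4} you obtain $\pi^*\bar\omega=\omega$ by noting $L_Q=L_{\bar\omega}$ and using that $\pi$ is b-Dirac, so $L_\omega=\pi^*L_{\bar\omega}=L_{\pi^*\bar\omega}$; the paper instead checks $\L_Z\omega=0$ and $\imath_Z\omega=0$ for Dirac sections $Z$ of $D$ and descends $\omega$ directly. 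Both are valid; yours is slightly slicker since it reuses part \ref{gpxy5jhi}. Parts \ref{15i0o7u8} and the leaf statement of \ref{r0dvmlu4} match the paper's arguments essentially verbatim. No gaps.
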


\begin{proof}
(1) Since $D$ is contained in $\ker \eta_M$ and $d\eta_M = 0$, there is a unique closed 3-form $\eta_Q$ on $Q$ such that $\pi^*\eta_Q = \eta_M$. By \cite[Proposition 1.13]{bur:13}, it then suffices to show that $L_M$ is $\pi$-invariant. Let $p$ and $q$ be points of $M$ such that $\pi(p)=\pi(q)$, and suppose that $Z$ is a Dirac local section of $D$ whose flow $f$ at some time satisfies $f(p)=q.$ Since the local automorphism $(f_*, f^{-1*})$ of $\TM$ preserves $L_M$, for any $(\pi_*v,\alpha)\in\pi_*L_{M,p}$ we have
\[(f_*v, f^{-1*}\pi^*\alpha)=(f_*v,\pi^*\alpha) \in L_{M, q}.\]
It follows that
\[(\pi_*v, \alpha)=(\pi_*f_*v,\alpha)\in \pi_*L_{M,q},\]
so $\pi_*L_{M,p}$ is contained in $\pi_*L_{M,q}$. Since these are vector spaces of the same dimension, they are equal and therefore $L_M$ is $\pi$-invariant.

(2) Suppose that $D$ is contained in $\ker \varphi_*$ and let $\phibar : Q \too Z$ be the induced map. Then 
\[\phibar_*L_Q = \phibar_* \pi_* L_M = \varphi_* L_M = L_Z,\] 
so $\phibar$ is f-Dirac. If $D$ contains $\ker \varphi_* \cap L_M$, then
\[\ker\phibar_*\cap L_Q=\ker\phibar_*\cap\pi_* L_M=\pi_*(\ker \varphi_*\cap L_M)=0.\]
Moreover, 
\[\pi^*\phibar^*\eta_Z=\varphi^*\eta_Z=\eta_M\]
and so, by uniqueness, $\phibar^*\eta_Z=\eta_Q$. Therefore $\phibar$ is a strong Dirac map.

(3) Suppose that $D$ is contained in the kernel of $L_M$ and let $(u,\pi^*\alpha)\in\pi^*\pi_* L_M,$ so that $(\pi_*u, \alpha)\in \pi_*L_M$. Then there is an element $v$ in $TM$ such that $\pi_*v=\pi_*u$ and $(v,\pi^*\alpha)\in L_M$, so $u - v$ is contained in $D$ and therefore also in the kernel of $L_M$. It follows that
\[(u, \pi^*\alpha) = (v, \pi^*\alpha) + (u - v, 0) \in L_M.\]
Since both sides are distributions on $M$ of the same rank, they are equal and so $\pi$ is b-Dirac.

(4) Suppose once again that $D$ is contained in the kernel of $L_M$. Then
\[p_T(\pi_*L_M)=\pi_*p_T(L_M\cap (TM\oplus\im\pi^*))=\pi_*p_T(L_M),\]
where the second equality follows from the fact that $\pi$ is b-Dirac by part \ref{gpxy5jhi}. In particular, if $L_M$ is nondegenerate and $L_M=L_\omega$, this implies that $L_Q$ is also nondegenerate. In this case, whenever $Z$ is a Dirac vector field we have 
\[\L_Z\omega = 0\quad\text{and}\quad \imath_Z\omega = 0,\]
where the first equality follows by definition and the second follows from the fact that $D$ is in the kernel of $L_M$. Therefore there is a unique 2-form $\overline{\omega}$ on $Q$ such that $\pi^*\overline{\omega} = \omega$, and so $L_Q=L_{\overline{\omega}}$.
\end{proof}

\begin{remark}
\label{strata1}
More generally, $M=\sqcup M^i$ may be a union of manifolds in which each stratum $M^i$ is a Dirac manifold. Then we can apply Proposition \ref{bgfkcgj1} to any collection $D=\{D^i\subset TM^i\}$ of involutive regular Dirac distributions for which the leaf spaces $M^i/D^i$ are smooth manifolds. We then obtain a topological quotient
\[M/D\coloneqq\bigsqcup M^i/D^i\]
of $M$ which is a disjoint union of Dirac manifolds.
\end{remark}

\vspace{.1in}

%
%
%
%
%
%
%
\section{Dirac reduction}
\label{second}
We are now ready to introduce reduction levels and relative reduction levels of Dirac manifolds, in Section \ref{2.1}. In Section \ref{2.2} we give a large number of examples, and in Section \ref{2.3} we prove our main reduction Theorem \ref{main}. In Section \ref{2.4} we illustrate this theorem by showing that it recovers a variety of familiar constructions from symplectic geometry and representation theory.  

\subsection{Reduction levels in Dirac manifolds}
\label{2.1}
{Let $X$ be a manifold equipped with a closed $3$-form $\eta_X$, and let $L_X$ be an $\eta_X$-twisted Dirac structure on $X$. Recall that a generalized submanifold of $X$ \cite{gua:11} is a pair $(S,\gamma)$ consisting of a submanifold $\imath:S\hooklongrightarrow X$ and a 2-form $\gamma\in\Omega^2(S)$ such that
\[\imath^*\eta_X+d\gamma=0.\]
The \emph{generalized tangent bundle} of $(S,\gamma)$ is the subbundle of $\TM$ given by the Dirac pushforward of the graph of $\gamma$---in other words,
\[\T^\gamma S\coloneqq \{(v,\alpha)\in \TM\mid v\in TS\,\text{ and }\,\imath^*\alpha=\gamma^\flat(v)\}.\]}

\begin{definition}
\label{levels}
{A \emph{reduction level} of $X$ is a generalized submanifold $(S,\gamma)$ such that the distribution
\[A_{S,\gamma} \coloneqq L_X \cap \T^\gamma S\]
has constant rank.
Proposition \ref{subalg} shows that in this case $A_{S,\gamma}$ is a Lie subalgebroid of $L_X$.} 
\end{definition}

In the special case when $\gamma= 0$ we simply write $S$ for the reduction level $(S,0)$ and $A_{S}$ for the corresponding stabilizer subalgebroid. Definition \ref{levels} has the following generalization, which we will use throughout the remainder of the paper.

\begin{definition}
\label{genlevels}
{Let $(Z, L_Z)$ be an $\eta_Z$-twisted Dirac manifold and let $\overline{Z}$ denote the same manifold with the opposite Dirac structure
\[-L_Z\coloneqq\{(v,-\alpha)\mid (v,\alpha)\in L_Z\}.\]
A \emph{$Z$-level} or \emph{relative reduction level} of $X$ is a reduction level $(S,\gamma)$ of $X\times\overline{Z}$ such that the natural projections}
\begin{equation*}
\begin{tikzcd}
&S\arrow[ld, swap, "p_X"]\arrow[rd, "p_Z"]&\\
X&&\overline{Z},									&
\end{tikzcd}
\end{equation*}
{satisfy
\begin{equation}
\label{relativebrane}
p_{Z*}(\tau_{{-\gamma}} p_X^*L_X) = L_Z,
\end{equation}
where $\tau_{{-\gamma}}$ is the gauge transformation by {$-\gamma$}. Proposition \ref{subalg} shows that in this case the kernel
\[B_{S,\gamma}\coloneqq \ker\left[A_{S,\gamma}\too L_{\overline{Z}}\right]\]
of the pointwise projection from $A_{S,\gamma}$ to $L_{\overline{Z}}$ is a Lie algebroid, which is naturally identified with a subalgebroid of $L_X$ and which we call the \emph{stabilizer subalgebroid of} $(S, \gamma)$.}
\end{definition}

Note that, while the Lagrangian distribution $p_X^*L_X$ is not necessarily a smooth bundle and therefore may fail to be a Dirac structure on $S$, in the first condition of Definition \ref{genlevels} we can nevertheless define its gauge transform by {$-\gamma$} and its pushforward along $p_Z$ pointwise.

When $Z$ is a single point, Definition \ref{genlevels} specializes to Definition \ref{levels} and we use the same notation. In particular, in this case we have $B_{S,\gamma}=A_{S,\gamma}$, so for consistency we will use the notation $B_{S,\gamma}$ throughout. Moreover, as in the previous case, when $\gamma=0$ we write $S$ for the $Z$-level $(S,0)$ and $B_{S}$ for the stabilizer subalgebroid $B_{S,0}$. 

\begin{proposition}
\label{subalg}
Let $(S,\gamma)$ be a $Z$-level of $X$. Then the distribution $B_{S,\gamma}$ is a Lie subalgebroid of $L_{X\times\overline{Z}}$.
\end{proposition}

\begin{proof}
{First we show that the pointwise projection $A_{S,\gamma}\too L_{\overline{Z}}$ is surjective. Suppose that $(v,\alpha)$ is a point of $L_{\overline{Z}}$. By condition \eqref{relativebrane}, there is a vector $w\in TS$ with $p_{Z*}w=v$ and a covector $\beta\in T^*X$ satisfying
\[(w,-p_Z^*\alpha+\gamma^\flat(w))=(w, p_X^*\beta)\in p_X^*L_X.\]
Then the pair $(w,p_X^*\beta+p_Z^*\alpha)$ belongs to $A_{S,\gamma}$, and its projection onto $L_{\overline{Z}}$ is precisely $(v,\alpha).$}

{This surjectivity, together with the constant-rank condition on $B_{S,\gamma}$, implies that the kernel $B_{S,\gamma}$ has constant rank, and is therefore a smooth subbundle of $L_{X\times\overline{Z}}$. It remains to show that for any two sections $(v, \alpha)$ and $(w, \beta)$ of $L_{X\times\overline{Z}}$ which restrict to sections of $B_{S,\gamma}$, the bracket 
\[\llbracket(v, \alpha), (w, \beta) \rrbracket\]
also restricts to a section of $B_{S,\gamma}$. Since $v$ and $w$ are contained in the kernel of $p_{Z*}$, so is $[v, w]$. Moreover,
\begin{align*}
(\imath_{v\wedge w}(\eta_X-\eta_Z))_{\vert S} = -\imath_{v\wedge w} d\gamma 
				&= \imath_w d(\imath_v \gamma)-\L_v (\imath_w \gamma) + \imath_{[v, w]}\gamma \\
				&=  \imath_wd(\imath^*\alpha)-\L_v(\imath^*\beta)+  \imath_{[v, w]}\gamma,
\end{align*}
so
\[\gamma^\flat[v, w]= \L_v(\imath^*\beta)-\imath_wd\alpha_{\vert S}+\imath_{v\wedge w}(\eta_X-\eta_Z)_{\vert S}\]
and therefore $\llbracket(u, \alpha), (v, \beta) \rrbracket$ restricts to a section of $B_{S,\gamma}$.}
\end{proof}

\begin{remark}
\label{oldlevels}
{A special class of relative reduction levels arises when $S$ is a submanifold of $X$, $\gamma$ is a $2$-form on $S$, and $\varphi:S\too Z$ is a smooth map of constant rank to a Dirac manifold $(Z,L_Z)$. The graph of $\varphi$ is a $Z$-level of $X$ if and only if 
\begin{itemize}[topsep=1.5pt]
\item $\varphi_*(\tau_{-\gamma}\imath^*L_X) = L_Z$ and $\imath^*\eta_X + d\gamma=\varphi^*\eta_Z$, and
\item the distribution
\begin{align*}
B &\coloneqq L_X \cap \imath_* L_\gamma\cap (\ker \varphi_* \oplus T^*X)\\
					&\,\,= \{(v, \alpha) \in L_X \mid v \in \ker \varphi_* \text{ and } \imath^*\alpha=\gamma^\flat(v)\}
					\end{align*}
has constant rank,
\end{itemize}
\noindent and in this case $B$ is its stabilizer subalgebroid.}
\end{remark}

%
%
%
%
%
%
%
\subsection{Examples of reduction levels}
\label{2.2}
Many types of submanifolds in Poisson, quasi-Poisson, and Dirac geometry can be viewed as reduction levels. We begin with a series of familiar examples.

\begin{example}
\label{ex:poisson2}
If $\gamma= 0$, then $S$ is a reduction level of $X$ if and only if
\[B_S=L_X\cap (TS\oplus TS^\circ)\]
has constant rank. When $X$ is a Poisson manifold, this implies that a submanifold of $X$ is a reduction level if and only if it is a pre-Poisson submanifold \cite{cat.zam:09}. In particular, any coisotropic submanifold of $X$ whose characteristic distribution has constant rank is a reduction level.
\end{example}

\begin{example}
\label{point}
Any point $x\in X$ is a reduction level, and its stabilizer subalgebroid
\[B_x=L_{X,x}\]
is the isotropy Lie algebra of $L_X$ at $x$.
\end{example}

\begin{example}
Suppose that the Dirac manifold $(X,L_X)$ is untwisted, and that $\O$ is a nondegenerate leaf of $X$. Then
\[B_\O=L_X\cap (T\O\oplus T\O^\circ)=(\ker L_X \oplus (L_X \cap T^*X))|_\O,\]
and $\O$ is a reduction level of $X$ when this distribution has constant rank. In particular, in the case when $X=\g^*$ is the dual of a Lie algebra, any coadjoint orbit $\O$ is a reduction level with stabilizer subalgebroid
\[B_{\O}=\{(\xi,x)\in\mathcal{O}\times\g\mid \ad_x^*(\xi)=0\}.\]
\end{example}

\begin{example}
\label{orbit}
Let $\O$ be a nondegenerate leaf of $(X,L_X)$ with presymplectic form $\omega$. Then $d\omega=-\imath^*\eta_X$ and
\[B_{\O,\omega}=L_X\cap\imath_*L_{\omega}=L_X\cap\imath_*\imath^*L_X=L_{X\vert\O},\]
where the last equality follows since the inclusion map of a presymplectic leaf is both b-Dirac and f-Dirac \cite[Example 3.5]{bur.cra:05}. This distribution always has constant rank, and therefore $(\O,\omega)$ is always a reduction level of $X$. In the special case when $X=\g^*$ is the dual of a Lie algebra and $\O$ is a coadjoint orbit with symplectic form $\omega$, the stabilizer subalgebroid is the action Lie algebroid
\[B_{\O,\omega}=\mathcal{O}\rtimes\g.\]
\end{example}

\begin{lemma}
\label{ptrans}
Suppose that $\imath:S\hooklongrightarrow X$ is a submanifold such that 
\begin{itemize}
\item $S$ intersects every nondegenerate leaf of $X$ cleanly,
\item $\imath^*\eta_{X}=0$, and
\item the kernel of $\imath^*L_X$ has constant rank.
\end{itemize}
Then $S$ is a reduction level of $X$ with stabilizer subalgebroid $B_S=\ker\imath^*L_X$. 
\end{lemma}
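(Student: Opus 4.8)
The plan is to verify the two defining conditions of a reduction level (Definition \ref{levels}) for the pair $(S,0)$ and, along the way, to identify the stabilizer subalgebroid with $\ker\imath^* L_X$; the whole argument is powered by the transversality hypothesis. With $\gamma=0$, Definition \ref{levels} gives
\[
A_{S,0} = \{(v,\alpha)\in L_X \mid v\in TS,\ \imath^*\alpha = 0\} = L_X\cap(TS\oplus TS^\circ).
\]
The key construction is the bundle map $\Psi\colon A_{S,0}\to\mathbb{T}S$, $(v,\alpha)\mapsto(v,\imath^*\alpha)=(v,0)$. Its image lies in $\imath^* L_X\cap TS = \ker\imath^* L_X$, and conversely any $(v,0)\in\ker\imath^* L_X$ comes from some $(v,\beta)\in L_X$ with $v\in TS$ and $\imath^*\beta=0$, hence with $\beta\in TS^\circ$, so $(v,\beta)\in A_{S,0}$ and $\Psi(v,\beta)=(v,0)$; thus $\Psi$ is fibrewise onto $\ker\imath^* L_X$. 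For injectivity I would use transversality: if $(v,\alpha),(v,\alpha')\in A_{S,0}$ then $(0,\alpha-\alpha')\in L_X$, so $\alpha-\alpha'$ lies in $L_X\cap T^*X = p_T(L_X)^\circ$ (since $L_X$ is Lagrangian) and also in $TS^\circ$; because $S$ meets the nondegenerate leaf through each of its points transversally, $p_T(L_X)+TS = TX$ along $S$, whence $p_T(L_X)^\circ\cap TS^\circ = 0$ and $\alpha=\alpha'$. Therefore $\Psi$ is a fibrewise isomorphism $A_{S,0}\to\ker\imath^* L_X$.

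Granting this, the constant-rank hypothesis on $\ker\imath^* L_X$ transfers through $\Psi$ to $A_{S,0}$, and a constant-rank intersection of vector bundles is a smooth subbundle; this is the first condition of Definition \ref{levels}, and $\Psi$ is precisely the asserted identity $A_S = \ker\imath^* L_X$. For the second condition, $p_T(A_{S,0})\subseteq p_T(L_X)\cap TS$, which at any point of $S$ equals $T\O\cap TS = T(S\cap\O)$ for the nondegenerate leaf $\O$ through that point (using transversality to identify $T(S\cap\O)$ with $T\O\cap TS$); since $\eta$ vanishes on $S\cap\O$, this yields $p_T(A_{S,0})\subseteq\ker\imath^*\eta$, which is the second condition of Definition \ref{levels} with $\gamma=0$. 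With both conditions checked, Proposition \ref{subalg} shows that $A_{S,0}$ is a Lie subalgebroid of $\imath^* L_X$, so $(S,0)$ is a reduction level of $X$ with stabilizer subalgebroid $A_S = \ker\imath^* L_X$.

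The step that needs the most care is the construction of $\Psi$ and the proof that it is a fibrewise isomorphism onto $\ker\imath^* L_X$. The subtlety is that $\imath^* L_X$ is a priori only a pointwise-defined Lagrangian distribution on $S$, possibly non-smooth, so the comparison with $A_{S,0}$ must be run fibrewise; it is precisely the transversality of $S$ to the nondegenerate leaves that makes $\Psi$ injective, and hence lets the constant-rank assumption be imposed on $\ker\imath^* L_X$, where it is natural, rather than directly on $A_S$. Transversality is also what guarantees $T_p(S\cap\O)=T_pS\cap T_p\O$, which is what makes the final condition on $\imath^*\eta$ follow from the hypothesis $\eta_{\vert S\cap\O}=0$.
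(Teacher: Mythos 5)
Your proof is correct and follows essentially the same route as the paper: the paper cites \cite{bal:21} for the fact that transversality forces $L_X\cap TS^\circ=0$ and then identifies $\ker\imath^*L_X\cong L_X\cap(TS\oplus TS^\circ)=A_S$, whereas you prove that vanishing directly (your injectivity step, via $L_X\cap T^*X\subseteq p_T(L_X)^\circ$ and $p_T(L_X)+TS=TX$) and package the identification as the fibrewise isomorphism $\Psi$. You additionally spell out the verification of the condition $p_T(A_S)\subseteq\ker\imath^*\eta$, which the paper's proof leaves implicit.
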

\begin{proof}
As in the proof of \cite[Lemma 1.18]{bal:21}, cleanness implies that 
\[L_X\cap TS^\circ=0,\]
and in particular that $\imath^*L_X$ is a non-twisted Dirac structure on $S$. Moreover, we have
\[\ker\imath^* L_X\cong\frac{L_X\cap(TS\oplus TS^\circ)}{L_X\cap TS^\circ}\cong L_X\cap(TS\oplus TS^\circ).\]
In view of Example \ref{ex:poisson2}, it follows that $B_S=\ker\imath^*L_X$ has constant rank, and therefore that $S$ is a reduction level of $(X,L_X)$. 
\end{proof}

\begin{example}
A submanifold $\imath:S\hooklongrightarrow X$ is called a \emph{clean Poisson--Dirac submanifold} \cite{bra.fre.mar:20} if $S$ intersects every nondegenerate leaf $\O$ of $X$ cleanly, and the restriction of the presymplectic form $\omega_{\O}$ to the intersection $S\cap\O$ is a symplectic form. In this case, the pullback $\imath^*L_X$ is a Poisson structure, so its kernel is trivial. In view of Lemma \ref{ptrans}, this means that $S$ is a reduction level of $X$ with trivial stabilizer subalgebroid. In particular, any Poisson transversal is a reduction level with trivial stabilizer subalgebroid.
\end{example}

Reduction levels and relative reduction levels also appear in a number of examples from geometric representation theory. We recall some of these examples here. Let $\g$ be a reductive complex Lie algebra, and equip it with a nondenerate invariant symmetric bilinear form. The resulting isomorphism $\g\cong\g^*$ gives the Lie algebra $\g$ a Kirillov--Kostant--Souriau Poisson structure. Let $\p$ be a parabolic subalgebra with Levi decomposition $\p=\l +\uu$.

\begin{example} 
\label{unips}
The distribution
\begin{align*}
B_{\uu}&= L_{\g}\cap (T\uu\oplus T\uu^\circ)\\
			&=\{(\xi,x)\in\uu\times\g\mid x\in\p\}\\
			&=\uu\rtimes\p
			\end{align*}
has constant rank, so $\uu$ is a reduction level of $\g$ with stabilizer subalgebroid $\uu\rtimes\p$.
\end{example}

\begin{example}
\label{parabs}
The nondegenerate form on $\g$ restricts to a nondegenerate form on the reductive subalgebra $\l$, which allows us to identify $\l\cong\l^*$. The quotient map 
\[\varphi:\p\too\l\]
fits into the commutative diagram
\begin{equation*}
\begin{tikzcd}[column sep=large, row sep=large]
\p			\arrow[r, hook, "\imath"] \arrow[d, swap, "\varphi"]			&\g\arrow[ld, "f"]		\\
\l,									&
\end{tikzcd}
\end{equation*}
where the horizontal arrow is the inclusion and the diagonal map $f$ is induced by the restriction $\g^*\too\l^*$ and is therefore a Poisson map. Identifying $L_{\g}$ with the action Lie algebroid $\g\rtimes\g$, the kernel of the distribution $\imath^*L_{\g}$ is precisely $\p\rtimes\uu,$ and therefore $\varphi_*\imath^*L_{\g}=f_*L_{\g}=L_{\l}$. 

Viewed as a submanifold of $\g\times\l$ via the embedding
\[(\imath, \varphi):\p\hooklongrightarrow\g\times\l\]
as in Remark \ref{oldlevels}, the subspace $\p$ is an $\l$-level of $\g$ with stabilizer subalgebroid
\begin{align*}
B_{\p}&= L_{\g}\cap (T\p\oplus T\p^\circ)\cap (\ker\varphi_*\oplus T^*\g)\\
			&=\{(x,\xi)\in\p\times\g\mid \xi\in\p\cap\uu\}\\
			&=\p\rtimes\uu.
			\end{align*}
\end{example}

The last two examples have counterparts when $G$ is a complex group integrating $\g$, equipped with the Cartan--Dirac quasi-Poisson structure, and $P$ is the parabolic subgroup with Lie algebra $\p$, unipotent radical $U$, and Levi quotient $L\coloneqq P/U$. The corresponding reduction levels are special cases of a more general class of reduction levels of Cartan--Dirac structures which we discuss in Section \ref{qpsec}.

\begin{example}
Since the nilradical $\uu$ satisfies $(\uu,[\uu,\uu])=0$, the Cartan $3$-form of $G$ vanishes along $U$. Using the identification \eqref{cdexp}, 
\begin{align*}
B_{U}&=L_G\cap (TU\oplus TU^\circ)\\
			&=\{(\xi^R-\xi^L,\sigma(\xi))\mid \xi\in\p\}\\
			&=U\rtimes\p
			\end{align*}			
is a distribution of constant rank, so $U$ is a reduction level of $G$ with stabilizer subalgebroid $U\rtimes\p$.
\end{example}

\begin{example}
Let $\imath:P\hooklongrightarrow G$ be the inclusion of a parabolic subgroup with unipotent radical $U$, and write 
\[\varphi:P\longrightarrow L\]
for the projection of $P$ onto the Levi. Then
\begin{align*}
\varphi_*\imath^*L_G&=\varphi_*\{(\xi^R-\xi^L, \sigma(\xi))\mid \xi\in\p\}\\
				&=\{(\xi^R-\xi^L, \sigma(\xi))\mid x\in\l\}\\
				&=L_L.
				\end{align*}
Therefore $P$, viewed as a submanifold of $G\times L$ via
\[(\imath,\varphi):P\hooklongrightarrow G\times L,\]
satisfies condition \eqref{relativebrane}. Moreover, since the Cartan $3$-form vanishes along $U$ we obtain $\varphi^*\eta_L=\imath^*\eta_G$, and we compute the associated distribution as
\begin{align*}
B_{P}&=L_G\cap (\ker\varphi_*\oplus TP^\circ)\\
			&=\{(\xi^R-\xi^L,\sigma(\xi))\mid \xi\in\uu\}\\
			&=P\rtimes\uu.
			\end{align*}			
Since it has constant rank, the submanifold $P$ is an $L$-level of $G$ with stabilizer subalgebroid $P\rtimes\uu$. 
\end{example}

Now let $\g$ be the Lie algebra of a real compact Lie group, choose a maximal Cartan subalgebra $\t$ and a set of simple roots $\Delta$, and denote by $\t^*_+\subset\t^*$ the corresponding closed dominant Weyl chamber. This chamber is a disjoint union
\[\t^*_+=\bigcup_{J\subset\Delta}\sigma_J\]
of faces defined by
\[\sigma_J\coloneqq\{s\in\t^*_+\mid s(\alpha^\vee)=0\text{ if and only if }\alpha\in J\},\]
where we write $\alpha^\vee$ for the coroot corresponding to the simple root $\alpha$. Note that any two elements of $\sigma_J$ have the same centralizer under the coadjoint action, and that this is a connected subgroup of $G$, which we denote by $G_J$. 

\begin{example}
\label{faces}
Writing $\imath:\sigma_J\hooklongrightarrow\g^*$ and $\varphi:\sigma_J\hooklongrightarrow\t^*$ for the inclusion maps, we see that
\[\varphi_*\imath^*L_{\g^*}=L_{\t^*}\]
are both the zero Poisson structure on $\t^*$, since $\sigma_J$ is a cross-section for the coadjoint orbits that it meets. Moreover, viewing $\sigma_J$ as a submanifold of $\g^*\times\t^*$ via the two inclusion maps, we obtain
\begin{align*}
B_{\sigma_J}&= L_{\g^*}\cap (T\sigma_J\oplus T\sigma_J^\circ)\\
			&=\{(s, \xi)\in\sigma_J\times\g\mid \ad_\xi^*s=0, \xi\in T\sigma_J^\circ\}\\
			&=\{(s, \xi)\in\sigma_J\times\g\mid \xi\in[\g_s,\g_s]\}\\
			&=\sigma_J\rtimes[\g_J,\g_J],
			\end{align*}
where we denote by $\g_J$ the Lie algebra of $G_J$ and where the third equality follows, for instance, from the proof of \cite[Lemma 4.27]{cro.may:21}. So $\sigma_J$ is a $\t^*$-level of $\g^*$ with stabilizer subalgebroid $\sigma_J\rtimes [\g_J,\g_J]$, and we can view the closed dominant Weyl chamber $(\t^*_+,\varphi)$ as a stratified $\t^*$-level of $\g^*$.
\end{example}

For a multiplicative counterpart to the previous example, let $G$ be the simply-connected compact Lie group integrating $\g$, equipped with the Cartan--Dirac structure. For each subset $J\subset\Delta$ define
\[S_J\coloneqq\{t\in T\mid \alpha(t)=1\text{ if and only if }\alpha\in J\}\]
to be the image of the corresponding face of the dominant alcove under the exponential map. The centralizer of any element in the submanifold $S_J$ is precisely the connected subgroup $G_J$.

\begin{example} 
\label{alcove}
Writing once again $\imath:S_J\hooklongrightarrow G$ and $\varphi:S_J\hooklongrightarrow T$ for the inclusion maps, we see that $\varphi_*\imath^*L_{G}$ is the zero Poisson structure on $T$ because $S_J$ intersects each conjugacy class in a unique point. We view $S_J$ as a submanifold of $G\times T$ and compute 
\begin{align*}
B_{S_J}&= L_G\cap (TS_J\oplus TS_J^\circ) \\
			&=\{(t,\xi)\in S_J\times\g\mid \xi^R-\xi^L=0, \sigma(\xi)\in TS_J^\circ\}\\
			&=\{(t,\xi)\in S_J\times\g\mid \xi\in[\g_t,\g_t]\}\\
			&=S_J\rtimes[\g_J,\g_J],
			\end{align*}
where once again the third equality follows from the proof of \cite[Lemma 4.27]{cro.may:21}. Since the Cartan $3$-form $\eta_G$ vanishes along $T$, we conclude that $S_J$ is a $T$-level of $G$ with stabilizer subalgebroid $S_J\rtimes [\g_J,\g_J]$. We can therefore view the image $(\overline{\mathcal{A}},\varphi)$ of the closed dominant alcove as a stratified $T$-level of the compact Lie group $G$.
\end{example}

%
%
%
%
%
%
\subsection{Reducing Dirac structures}
\label{2.3}
Let $(M, L_M)$ be an $\eta_M$-twisted Dirac manifold together with a strong Dirac map $\Phi:M\longrightarrow X.$ Condition \eqref{strong} induces a Lie algebroid action
\[\rho_M:L_{X}\times_{X}M\longrightarrow TM.\]
Let $(S,\gamma)$ be a $Z$-level of $X$ and {suppose that the map $\Phi\times p_X$ intersects the diagonal in $X\times X$ cleanly, so that the fiber product $C \coloneqq M\times_XS$ is a smooth manifold. We then have a Cartesian diagram}
\begin{equation}
\label{diagram}
\begin{tikzcd}[row sep=large, column sep=large]
C			\arrow[r, "p_M"] \arrow[d, swap, "\Phi"]			&M	\arrow[d, "\Phi"]\\
S		\arrow[r, "p_X"]							&X,
\end{tikzcd}
\end{equation}
{and the action of $L_{X}$ on $M$ pulls back to an action
\begin{align*}
\rho_C : B_{S,\gamma} \times_S C &\too TC\\
			(v,\alpha)&\mtoo (w,v)
			\end{align*}
of the stabilizer subalgebroid $B_{S,\gamma}$ on $C$, where $(v,\alpha)\in L_X$ and $w$ is the unique element of $TM$ such that $\Phi_*w=v$ and $(w,\Phi^*\alpha)\in L_M.$}

\begin{proposition}
\label{restriction}
{Suppose that the isotropy algebras of $\rho_C$ have constant dimension along $C$. Then
\begin{equation}
\label{twistpull}
L_C \coloneqq \tau_{-\Phi^*\gamma}p_M^*L_M
\end{equation}
is a $\Phi^*(p_X^*\eta_X + d\gamma)$-twisted Dirac structure on $C$ which satisfies 
\[\Phi_*L_C=\tau_{-\gamma}p_X^*L_X.\]}
\end{proposition}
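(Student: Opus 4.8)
The plan is to verify the two assertions of Proposition \ref{restriction} in turn: first that $L_C$ as defined in \eqref{twistpull} is a genuine Dirac structure with the claimed twist, and second that its pushforward along $\Phi_C$ is the gauge transform $\tau_{-\gamma}\imath^*L_X$. For the first part, recall that $\jmath^*L_M$ is the Dirac pullback of $L_M$ along the inclusion $\jmath:C\hooklongrightarrow M$; by the general theory reviewed in Section \ref{first}, this is an $\jmath^*\eta_M$-twisted Lagrangian distribution, and it will be a smooth subbundle — hence an honest Dirac structure — precisely when it has locally constant rank. Here $\eta_M = \Phi^*\eta$ (up to a closed form vanishing on $p_T(L_M)$, which does not matter), so $\jmath^*\eta_M = \Phi_C^*\imath^*\eta$. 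Applying the gauge transformation $\tau_{-\Phi_C^*\gamma}$ then changes the twist by $-d(-\Phi_C^*\gamma) = \Phi_C^*d\gamma$, giving the stated twist $\Phi_C^*(\imath^*\eta + d\gamma)$; this is the routine bookkeeping part, using that gauge transformations commute with pullbacks as in \eqref{gaugecom}.

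The crux is the constant-rank claim, and this is where the hypothesis on the isotropy algebras of $\rho_C$ enters. First I would identify the kernel of the pullback: since $\Phi$ is a Dirac realization, $\ker\Phi_* \cap L_M = 0$, and a clean-intersection/dimension count should show that the failure of $\jmath^*L_M$ to have constant rank is governed exactly by how $TC$ (equivalently $\ker\Phi_{C*}$ together with the fiber directions) meets $L_M$. More precisely, I expect $\dim (\jmath^*L_M)_p$ to be expressible in terms of $\dim(L_{M,p} \cap (TC_p \oplus T^*M_p))$ minus a correction coming from $TC^\circ$, and the Lie algebroid action $\rho_C: A_{S,\gamma,\varphi}\times_S C \to TC$ lets one rewrite the relevant intersection. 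Concretely, an element of $A_{S,\gamma,\varphi}$ acts trivially at a point $p\in C$ iff the corresponding $w = \rho_M(v,\alpha)$ vanishes there, and by the defining property of the $L_X$-action (uniqueness of $w$ with $\Phi_*w = v$ and $(w,\Phi^*\alpha)\in L_M$), the vanishing locus of $\rho_C$ is controlled by which pairs $(v,\alpha)\in A_{S,\gamma,\varphi}$ have $(0,\Phi^*\alpha)\in L_M$ — that is, by the isotropy. So the hypothesis that these isotropy algebras have constant dimension translates, via a short linear-algebra argument with the Cartesian square \eqref{diagram}, into the constant-rank statement for $L_C$. This linear-algebra step — pinning down the exact formula relating $\operatorname{rank}(\jmath^*L_M)$, the rank of $A_{S,\gamma,\varphi}$, and $\dim\ker\rho_C$ — is the main obstacle; everything hinges on being careful with the clean-intersection hypothesis and the fact that $\Phi_C$ need not be a submersion.

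For the second assertion, $\Phi_{C*}L_C = \tau_{-\gamma}\imath^*L_X$, I would argue by a direct description of pushforward and pullback of Dirac structures under the maps in \eqref{diagram}, together with \eqref{gaugecom}. Since gauge transformations commute with pushforward, $\Phi_{C*}L_C = \Phi_{C*}\tau_{-\Phi_C^*\gamma}\jmath^*L_M = \tau_{-\gamma}\Phi_{C*}\jmath^*L_M$, so it suffices to show $\Phi_{C*}\jmath^*L_M = \imath^*L_X$. The inclusion $\Phi_{C*}\jmath^*L_M \subseteq \imath^*L_X$ follows formally from $\Phi$ being f-Dirac: if $(v, \jmath^*\Phi^*\beta) = (v,\Phi_C^*\imath^*\beta)\in (\jmath^*L_M)_p$ then $(\jmath_*v,\Phi^*\imath^*\beta)\in L_{M,p}$ pushes forward under $\Phi$ to $(\Phi_{C*}v, \imath^*\beta)\in L_{X,\Phi(p)}$, which lies in $\imath^*L_X$. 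For the reverse inclusion I would use that $\Phi$ is a Dirac realization: given $(u,\imath^*\beta)\in (\imath^*L_X)_x$ with $x = \Phi(p)$, the element $(u,\beta)\in L_X$ (or rather a representative with $\imath^*$ of the covector being $\imath^*\beta$) is acted on, producing a unique $w\in T_pM$ with $\Phi_*w = u$ and $(w,\Phi^*\beta)\in L_{M,p}$; one then checks $w\in T_pC = \ker(d(\imath\circ\Phi_C)\text{-defect})$ using that $u\in TS$ and the clean intersection, so $(w,\Phi_C^*\imath^*\beta)\in(\jmath^*L_M)_p$ maps to $(u,\imath^*\beta)$. A dimension comparison using part one then upgrades the inclusion to equality. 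The f-Dirac direction is immediate; the reverse direction is where one must again lean on the Dirac-realization hypothesis and clean intersection, but it is more routine than the rank computation above.
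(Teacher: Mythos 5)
Your outline follows the same route as the paper's proof---smoothness of $\jmath^*L_M$ via a constant-rank criterion tied to $\ker\rho_C$, the twist by gauge-commutation, and the pushforward identity via the Cartesian square---but the step you label ``the main obstacle'' is exactly where the content of the proposition lives, and you do not carry it out. Two corrections to your setup. First, the issue is not whether $\jmath^*L_M$ has locally constant rank: a pointwise pullback of a Lagrangian subbundle is automatically Lagrangian of rank $\dim C$ everywhere, so its rank is constant for free. What must be shown is \emph{smoothness}, and by Courant's criterion this reduces to showing that $L_M\cap TC^\circ$ has constant rank, via the identification $\jmath^*L_M\cong\bigl(L_M\cap(TC\oplus T^*M)\bigr)/\bigl(L_M\cap TC^\circ\bigr)$. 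The paper then closes the gap with two short identifications that you gesture at but never establish: (i) since $\rho_C(v,\alpha)=0$ forces $v=0$ and hence $\imath^*\alpha=\gamma^\flat(v)=0$, one has $\ker\rho_C=\ker\rho_M\cap L_X\cap TS^\circ$; and (ii) because $\Phi$ is f-Dirac, every element of $L_M\cap TC^\circ$ has the form $(0,\Phi^*\alpha)$ with $\alpha\in TS^\circ$ and $(0,\alpha)\in L_X$, giving a pointwise isomorphism $L_M\cap TC^\circ\cong\bigl(\ker\rho_M\cap L_X\cap TS^\circ\bigr)/\ker\Phi^*$. Constancy of the numerator is precisely the isotropy hypothesis via (i); constancy of the denominator is clean intersection. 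Note that the rank of $A_{S,\gamma,\varphi}$, which appears in the formula you propose to find, plays no role.

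Second, your forward inclusion $\Phi_{C*}\jmath^*L_M\subseteq\imath^*L_X$ is not purely formal from $\Phi$ being f-Dirac: a general element of $\jmath^*L_M$ has covector $\jmath^*\beta'$ for an arbitrary $\beta'\in T^*M$ satisfying $\jmath^*\beta'=\Phi_C^*\delta$, not one already of the form $\jmath^*\Phi^*\beta$, so it cannot be pushed forward along $\Phi$ until you adjust $\beta'$ by an element of $TC^\circ$ and invoke the equality $TC^\circ=\Phi^*(TS^\circ)$ along $C$, which itself comes from clean intersection. This is fixable, and it is the content of the identity $\Phi_{C*}\jmath^*=\imath^*\Phi_*$ for clean Cartesian squares that the paper simply cites from \cite{bal:21}. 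Your reverse inclusion via the Lie algebroid action is correct (no dimension count is even needed once clean intersection gives $T_pC=(d\Phi_p)^{-1}(TS)$), and your twist computation is fine.
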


\begin{proof}
{Since $\rho_M(v,\alpha)=0$ implies that $v=0$, the isotropy subalgebras of $\rho_C$ are given by the intersection
\begin{equation}
\label{eq:dim}
\ker\rho_C=\ker\rho_M\cap \ker p_X^*.
\end{equation}
To show that $L_C$ is a Dirac structure, we must show that $p_M^*L_M$ is a smooth bundle, and for this is it sufficient to check that $L_M\cap \ker p_M^*$ has constant rank \cite[Proposition 1.10]{bur:13}.}

{First note that
\begin{align*}
L_M\cap \ker p_M^*&=\{(0,\beta)\in L_M\mid p_M^*\beta=0\}\\
		&=\{(0,\Phi^*\alpha)\in L_M\mid \Phi^*p_X^*\alpha=0\}.
		\end{align*}
Because $\Phi$ is f-Dirac, $(0,\Phi^*\alpha)\in L_M$ implies that $(0,\alpha)\in L_{X}$. At each point of $C$ we therefore get a natural isomorphism
\[L_M\cap \ker p_M^*\cong \frac{\ker\rho_M\cap \ker p_X^*}{\ker\Phi^*}.\]
Since the fiber product is smooth, the bottom of this quotient has constant dimension. Moreover, in view of \eqref{eq:dim} and the assumption of the proposition, the top also has constant dimension. Therefore the dimension of the left-hand side is constant along $C$, and it follows that $L_C$ is a Dirac structure.}

To check the final two conditions we chase diagram \eqref{diagram}. Since $\Phi$ is a strong Dirac map and $\tau_{-\Phi^*\gamma}$ is a gauge transform, the Dirac structure $L_C$ is twisted by
\[p_M^*\Phi^*\eta_X + d\Phi^*\gamma = \Phi^*(p_X^*\eta_X + d\gamma).\]
Moreover,
\begin{align*}
\Phi_*L_C&=\Phi_*\tau_{-\Phi^*\gamma}p_M^*L_M \\
			&=\tau_{-\gamma}\Phi_*p_M^*L_M \\
			&=\tau_{-\gamma}p_X^*L_X,
			\end{align*}
where the second equality {follows by commuting the gauge transformation with the Dirac pushforward} and the third follows from \cite[Lemma 1.16]{bal:21}
\end{proof}

\begin{proposition}
\label{distribution}
The orbit distribution of the action map $\rho_C$ satisfies
\[\im \rho_C = L_C \cap \ker(p_{Z*}\circ\Phi_*).\]
\end{proposition}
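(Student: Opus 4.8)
The plan is to analyze both sides of the claimed equality fiberwise over a point $p \in C$, using the explicit description of the action map $\rho_C$. Recall that $\rho_C$ is the restriction to $A_{S,\gamma,\varphi} \times_S C$ of the Lie algebroid action $\rho_M$, which sends $(v,\alpha) \in L_{X,\Phi(p)}$ to the unique $w \in T_pM$ with $\Phi_* w = v$ and $(w, \Phi^*\alpha) \in L_{M,p}$. So $\im \rho_C$ at $p$ consists of all such $w$ with $(v,\alpha) \in A_{S,\gamma,\varphi}$, i.e. with $v \in \ker \varphi_*$, $v \in TS$, and $\imath^*\alpha = \gamma^\flat(v)$.

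First I would prove the inclusion $\im \rho_C \subseteq \ker L_C \cap \ker(\varphi_* \circ \Phi_{C*})$. For $w = \rho_C(v,\alpha)$: since $\Phi_{C*}w = v \in \ker \varphi_*$, we get $w \in \ker(\varphi_* \circ \Phi_{C*})$ immediately. To see $w \in \ker L_C$, I need $(w,0) \in L_C = \tau_{-\Phi_C^*\gamma}\jmath^*L_M$, equivalently $(w, -\Phi_C^*\gamma^\flat(w)) \in \jmath^*L_M$, equivalently $(w, -\jmath^*\Phi^*(\gamma^\flat(v)))$ lifts into $L_M$ along $\jmath$ — but this is exactly $(w, \Phi^*\alpha - \Phi^*(\imath^*\alpha)) \in L_M$ pulled back, and $\Phi^*(\imath^*\alpha)$ and $\Phi_C^*\gamma^\flat(v)$ agree as elements of $T^*C$ because $\imath^*\alpha = \gamma^\flat(v)$. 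Here I would need to carefully track the difference between $\Phi^*\alpha \in T^*_pM$ and its restriction to $T_pC$, using that $w \in T_pC$ so pairings only see the restriction; the clean-intersection hypothesis and the Cartesian diagram \eqref{diagram} guarantee that these pullbacks are compatible. This direction is essentially a bookkeeping argument with gauge transforms and the commutativity of \eqref{diagram}.

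The reverse inclusion is the substantive part and I expect it to be the main obstacle. Suppose $w \in \ker L_C \cap \ker(\varphi_* \circ \Phi_{C*})$. Set $v \coloneqq \Phi_{C*}w$; then $v \in TS$ (since $w \in TC$ and the diagram is Cartesian) and $v \in \ker \varphi_*$. From $w \in \ker L_C$, unwinding the gauge transform and the pullback as above, I get that there exists $\alpha \in T^*_{\Phi(p)}X$ with $(v,\alpha) \in L_X$ and with $\imath^*\alpha$ agreeing with $\gamma^\flat(v)$ on the relevant subspace; the point is to promote this to an honest equality $\imath^*\alpha = \gamma^\flat(v)$ in $T^*S$, possibly after adjusting $\alpha$ by an element of $TS^\circ$ using that $L_X \cap TS^\circ$ contributes only to the ambiguity and that $(v,\alpha)$ can be chosen so that the associated $w' = \rho_M(v,\alpha)$ equals $w$ — this last step uses the injectivity of $\rho_M$ from condition \eqref{strong} together with the fact that $w$ itself is a lift of $v$ lying in $L_M$ (after the gauge correction). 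Once $(v,\alpha) \in A_{S,\gamma,\varphi}$ is arranged, we have $w = \rho_C(v,\alpha) \in \im \rho_C$. The delicate point throughout is the interplay between covectors on $M$, their pullbacks to $C$, and covectors on $X$ and $S$; I would organize this by first establishing, using the clean intersection and the forward-Dirac property of $\Phi$, the identity $\jmath^*L_M \cap (\text{appropriate subbundle}) \cong$ a quotient involving $L_X \cap (TS \oplus T^*X)$, analogous to the isomorphisms already used in the proof of Proposition \ref{restriction}, and then read off both inclusions from that identification.
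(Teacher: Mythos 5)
Your forward inclusion is correct and is exactly the paper's argument: for $w=\rho_C(v,\alpha)$ one has $\Phi_{C*}w=v\in\ker\varphi_*$, and the chain $\jmath^*\Phi^*\alpha=\Phi_C^*\imath^*\alpha=\Phi_C^*\gamma^\flat(v)=(\Phi_C^*\gamma)^\flat(w)$ shows $(w,0)\in\tau_{-\Phi_C^*\gamma}\jmath^*L_M=L_C$. No issues there.

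The converse, as you have sketched it, has a gap at precisely the point you flag, and the "promotion" strategy you propose is not the way out. Unwinding $w\in\ker L_C$ directly, as you suggest, only gives a covector $\beta\in T^*M$ with $(w,\beta)\in L_M$ and $\jmath^*\beta=(\Phi_C^*\gamma)^\flat(w)$; there is no a priori reason for $\beta$ to be of the form $\Phi^*\alpha$ for some $\alpha\in T^*X$, and adjusting a candidate $\alpha$ by elements of $TS^\circ$ does not obviously preserve membership of $(v,\alpha)$ in $L_X$. The missing idea is that Proposition \ref{restriction} already hands you exactly the statement you need: it proves $\Phi_{C*}L_C=\tau_{-\gamma}\imath^*L_X$. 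Applying this to $(w,0)\in L_C$ gives $(v,0)\in\tau_{-\gamma}\imath^*L_X$ with $v=\Phi_{C*}w$, and by the very definition of the gauge-transformed pullback this produces $\alpha\in T^*X$ with $(v,\alpha)\in L_X$ and $\imath^*\alpha=\gamma^\flat(v)$ as an honest equality in $T^*S$ --- no partial equality to upgrade. Since also $v\in\ker\varphi_*$, this $(v,\alpha)$ lies in $A_{S,\gamma,\varphi}$. Finally, your appeal to ``injectivity of $\rho_M$'' for identifying $w$ with $\rho_C(v,\alpha)$ is not the relevant mechanism: what one checks is that $(w,\Phi^*\alpha)\in L_M$, and then the uniqueness of lifts guaranteed by condition \eqref{strong} forces $w=\rho_M(v,\alpha)=\rho_C(v,\alpha)$. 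So the skeleton of your argument is the paper's, but the step that makes it close --- passing to the pushforward $\Phi_{C*}L_C$ rather than unwinding $\jmath^*L_M$ on $M$ --- is absent, and without it the existence of a suitable $\alpha$ on $X$ is not established.
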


\begin{proof}
{We begin by showing the forward inclusion. Let $(v, \alpha) \in B_{S,\gamma}$ and let $(u,v)= \rho_C(v, \alpha)$, so that $u$ is the unique element of $TM$ satisfying
\[\Phi_*u=v \quad \text{and} \quad (u, \Phi^*\alpha) \in L_M.\]
Then $(u,v) \in \ker (p_{Z*}\circ\Phi_*)$. Moreover,
\[p_M^*\Phi^*\alpha = \Phi^*p_X^*\alpha = \Phi^*\gamma^\flat(v),\]
so
\[((u,v), 0) = ((u,v), p_M^*\Phi^*\alpha - \Phi^*\gamma^\flat(v)) \in L_C,\]
and therefore $(u,v)\in\ker L_C$.}

{Conversely, let $(u,v) \in L_C \cap \ker(p_{Z*}\circ\Phi_*)$. Then there exists a 1-form $\beta\in T^*M$ such that
\[(u,\beta)\in L_M\quad\text{and}\quad p_M^*\beta=\Phi^*(\gamma^\flat(v)),\]
and the clean intersection condition guarantees that $\beta=\Phi^*\alpha$ for some covector $\alpha$ in $T^*X$. Since $\Phi$ is f-Dirac we have
\[(\Phi_*u,\alpha)=(v,\alpha)\in L_X,\]
and since $v$ is in the kernel of $p_{Z*}$ we get $(v,\alpha)\in B_{S,\gamma}$. Therefore 
\[(u,v)=\rho_C(v,\alpha).\qedhere\]}
\end{proof}

\begin{theorem}
\label{main}
{Suppose that the orbit space $Q\coloneqq C/B_{S,\gamma}$ is a manifold with quotient map $\pi:C\longrightarrow Q.$
\begin{enumerate}[label=\textup{(\arabic*)}]
\item\label{8b2exmfb}
The pushforward
\[L_Q\coloneqq \pi_*L_C\]
is an $\eta_Q$-twisted Dirac structure on $Q$, where $\eta_Q$ is uniquely characterized by the condition
\[\pi^*\eta_Q=\Phi^*(p_X^*\eta_X + d\gamma).\]
\item\label{bq5k3wpc}
The composition $p_Z \circ \Phi : C \too Z$ descends to a strong Dirac map 
\[\pbar:Q\longrightarrow Z.\]
\item\label{8b2exmfa}
The $f$-Dirac map $\pi : (C, L_C) \too (Q, L_Q)$ is also b-Dirac.
\item\label{4s6edzna}
The nondegenerate leaves of $L_Q$ are the images under $\pi$ of the nondegenerate leaves of $L_C$. In particular, if $M$ is nondegenerate and $L_M=L_\omega$, then $L_Q$ is also nondegenerate, and the presymplectic form $\overline{\omega}$ determining it is uniquely characterized by the equation
\[\pi^*\overline{\omega} = p_M^*\omega - \Phi^*\gamma.\]
\end{enumerate}}
\end{theorem}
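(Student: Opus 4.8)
The plan is to obtain Theorem \ref{main} as an instance of Proposition \ref{bgfkcgj1}, applied to the Dirac manifold $(C,L_C)$ furnished by Proposition \ref{restriction} and the involutive distribution $D\coloneqq\im\rho_C\s TC$. First I would note that the hypothesis of Proposition \ref{restriction} holds automatically here: since $\pi:C\to Q$ is a submersion, the orbits of $\rho_C$ are its fibres and so have constant dimension, and because $A_{S,\gamma,\varphi}$ has constant rank it follows that the isotropy algebras of $\rho_C$ have constant dimension along $C$. Proposition \ref{restriction} then provides the $\Phi_C^*(\imath^*\eta+d\gamma)$-twisted Dirac structure $L_C=\tau_{-\Phi_C^*\gamma}\jmath^*L_M$ on $C$, together with the identity $\Phi_{C*}L_C=\tau_{-\gamma}\imath^*L_X$.

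Next I would check the hypotheses of Proposition \ref{bgfkcgj1} for $D$, with the twist taken to be $\Phi_C^*(\imath^*\eta+d\gamma)$. The distribution $D$ is involutive, being the orbit distribution of a Lie algebroid action, and it is of constant rank because the orbits of $\rho_C$ have constant dimension. By Proposition \ref{distribution}, $D=\ker L_C\cap\ker(\varphi_*\circ\Phi_{C*})$, so in particular $D\s\ker L_C$. Moreover, if $u\in D$ then $\Phi_{C*}u\in p_T(A_{S,\gamma,\varphi})$, which is contained in $\ker(\imath^*\eta+d\gamma)$ by the last axiom of Definition \ref{genlevels}; since $\imath_u\Phi_C^*(\imath^*\eta+d\gamma)=\Phi_C^*\bigl(\imath_{\Phi_{C*}u}(\imath^*\eta+d\gamma)\bigr)$ vanishes for every $u\in D$, we get $D\s\ker\Phi_C^*(\imath^*\eta+d\gamma)$. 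As $D$ is then contained in $L_C\cap\ker\Phi_C^*(\imath^*\eta+d\gamma)$, Remark \ref{mzq8z1k7} shows that $D$ is a regular Dirac distribution, and $C/D=Q$ is a manifold by assumption, so Proposition \ref{bgfkcgj1} applies.

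It remains to read off the four statements. Part \ref{8b2exmfb} is Proposition \ref{bgfkcgj1}\ref{tpcp6ia5}. For part \ref{bq5k3wpc} I would apply Proposition \ref{bgfkcgj1}\ref{15i0o7u8} to $f\coloneqq\varphi\circ\Phi_C:C\to Z$: this map is f-Dirac because, by functoriality of the pushforward together with Proposition \ref{restriction} and the first axiom of Definition \ref{genlevels}, $f_*L_C=\varphi_*(\Phi_{C*}L_C)=\varphi_*(\tau_{-\gamma}\imath^*L_X)=L_Z$; and by Proposition \ref{distribution} we have $D=\ker f_*\cap\ker L_C$, so $D\s\ker f_*$ while $D$ also contains $\ker f_*\cap L_C$, whence $\overline{f}=\phibar$ is a Dirac realization. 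Parts \ref{8b2exmfa} and \ref{4s6edzna} follow from Proposition \ref{bgfkcgj1}\ref{gpxy5jhi} and \ref{r0dvmlu4}, both of which apply since $D\s\ker L_C$; for the final claim of part \ref{4s6edzna}, when $L_M=L_\omega$ one has $L_C=\tau_{-\Phi_C^*\gamma}\jmath^*L_\omega=L_{\jmath^*\omega-\Phi_C^*\gamma}$, which is nondegenerate, and Proposition \ref{bgfkcgj1}\ref{r0dvmlu4} characterizes $\bar\omega$ by $\pi^*\bar\omega=\jmath^*\omega-\Phi_C^*\gamma$.

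All the substance of this argument is contained in Propositions \ref{restriction}, \ref{distribution}, and \ref{bgfkcgj1}, so I do not expect a serious obstacle; the points needing care are the bookkeeping ones---verifying that the constant-dimension hypothesis of Proposition \ref{restriction} is forced by $Q$ being a manifold, carrying the twist $\Phi_C^*(\imath^*\eta+d\gamma)$ of $L_C$ correctly through the quotient and checking that $D$ lies in its kernel, and making sure the composition $(\varphi\circ\Phi_C)_*L_C=\varphi_*(\Phi_{C*}L_C)$ is legitimate even though $\Phi_{C*}L_C=\tau_{-\gamma}\imath^*L_X$ need not be a smooth subbundle of $\mathbb{T}S$.
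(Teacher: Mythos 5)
Your proof is correct and follows essentially the same route as the paper: both deduce the theorem from Proposition \ref{bgfkcgj1} applied to the orbit distribution $D=\im\rho_C$, using Propositions \ref{restriction} and \ref{distribution} together with Remark \ref{mzq8z1k7}. Your additional verifications---that the manifold hypothesis on $Q$ forces the constant-dimension hypothesis of Proposition \ref{restriction}, and that $D$ lies in the kernel of the twist $\Phi_C^*(\imath^*\eta+d\gamma)$---are details the paper leaves implicit, and they are correct.
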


\begin{proof}
(1) By Proposition \ref{bgfkcgj1}\ref{tpcp6ia5}, to check that $L_Q$ is an $\eta_Q$-twisted Dirac structure on $Q$ it suffices to check that the orbit distribution $\im \rho_C$ on $C$ is a regular Dirac distribution contained in the kernel of 
\[\Phi^*(p_X^*\eta_X + d\gamma) = \Phi^*p_Z^*\eta_Z.\]
By Remark \ref{mzq8z1k7}, this follows from Proposition \ref{distribution}.

(2) Note that the composition $p_Z\circ\Phi$ is f-Dirac by Proposition \ref{restriction}. Moreover, by Proposition \ref{distribution} we have 
\[\im \rho_C \s \ker (p_Z\circ\Phi)_*\quad\text{ and }\quad\ker (p_Z\circ\Phi)_* \cap L_C \s \im \rho_C,\]
so Proposition \ref{bgfkcgj1}\ref{15i0o7u8} implies that $p_Z$ descends to a strong Dirac map.

Parts (3) and (4) are a direct consequence of parts \ref{gpxy5jhi} and \ref{r0dvmlu4} of Proposition \ref{bgfkcgj1}, since Proposition \ref{distribution} implies that $\im \rho_C$ is contained in the kernel of $L_C$.
\end{proof}

{The $Z$-level $(S,\gamma)$ is an ordinary reduction level of $X$ if and only if $Z$ is a single point. In this case we obtain the following immediate corollary, which shows that ordinary reduction levels can be viewed as pre-Poisson submanifolds of twisted Dirac manifolds.}

\begin{corollary}
{Suppose that $(S,\gamma)$ is a reduction level of $(X,L_X)$ and that $\Phi:(M,L_M)\too (X,L_X)$ is an strong Dirac map} {which intersects $S$ cleanly. If the orbit space $Q\coloneqq \Phi^{-1}(S)/B_{S,\gamma}$ is a manifold sitting in the diagram}
\begin{equation*}
\begin{tikzcd}[row sep=large, column sep=large]
\Phi^{-1}(S)			\arrow[hook, r, "\jmath"] \arrow[d, swap, "\pi"]			&M\\
Q		&,
\end{tikzcd}
\end{equation*}
{then the distribution
\[L_Q\coloneqq \pi_*\tau_{-\Phi^*\gamma}\jmath^*L_M\]
is the graph of a Poisson structure on $Q$.}
\end{corollary}

\begin{remark}
\label{strata2}
We make several remarks about these results:
\begin{enumerate}[leftmargin=20pt] 
\item Suppose that the Dirac manifold $(X,L_X)$ is integrable with presymplectic groupoid $\G \tto X$. If the Lie algebroid $B_{S,\gamma}$ integrates to a source-connected groupoid
\[\G_{S,\gamma}\tto S\]
which acts on $C$, the orbits of $\G_{S,\gamma}$ coincide with those of $B_{S,\gamma}$ and Theorem \ref{main} gives a Dirac structure on the quotient $C/\G_{S,\gamma}$. In the nondegenerate case, an extension of this result to actions of certain non-source-connected groupoids is proved in Section \ref{fifth}.
\item We can apply Theorem \ref{main} to the case when $S=\sqcup S_i$ is a union of subpaces of $X\times Z$, as in Examples \ref{faces} and \ref{alcove}. That is, suppose that there exist $2$-forms $\gamma_i\in\Omega^2(S_i)$ such that each $(S_i,\gamma_i)$ is a $Z$-level of $X$. If each of these $Z$-levels satisfies the hypotheses of Theorem \ref{main}, we obtain a reduced space
\[\bigsqcup (M\times_XS_i)/B_{S_i,\gamma_i}\]
which is a topological quotient of $M\times_XS$ and which is equipped with a continuous map to $Z$ that restricts to a strong Dirac map along every stratum. {In certain cases, such as those of Examples \ref{faces} and \ref{alcove}, this quotient has the structure of a stratified space in the sense of \cite{mat:73}.}
\item Lastly, the assumptions on the $Z$-level $(S,\gamma)$ could be relaxed by replacing the pullback condition 
\[p_Z^*\eta_Z=p_X^*\eta_X + d\gamma\]
by the less restrictive condition
\[p_T(B_{S,\gamma})\subset\ker(p_X^*\eta_X + d\gamma).\]
The resulting reduction produces a f-Dirac map $\pbar:Q\too Z$ which satisfies the second condition of \eqref{strong}, but not the first.
\end{enumerate}
\end{remark}

Suppose that $(Y,L_Y)$ is an $\eta_Y$-twisted Dirac structure and consider the product Dirac manifold $(X\times Y, L_X\times L_Y)$. If $S$ is a $Z$-level of $X$, then $S\times Y$ is a $Z$-level of $X\times Y$. Theorem \ref{main} then has the following immediate corollary, which we will use repeatedly to study quasi-Poisson reduction in Section \ref{qpsec}.

\begin{corollary}
\label{cormain}
{Let 
\[(\mu,\nu):M\too X\times Y\]
be a strong Dirac map such that the $Z$-level $S$ of $X$ intersects $\mu\times p_X$ cleanly, and let $C= M\times_XS$. Suppose that the orbit space $Q\coloneqq C/B_{S,\gamma}$ is a manifold with quotient map $\pi:C\longrightarrow Q.$ Then the pushforward
\[L_Q\coloneqq \pi_*L_C\]
is an $\eta_Q$-twisted Dirac structure on $Q$, where $\eta_Q$ is uniquely characterized by the condition
\[\pi^*\eta_Q=\mu_C^*(p_X^*\eta_X + d\gamma)+p_M^*\nu^*\eta_Y,\]
and the map $p_Z$ descends to give a strong Dirac map 
\[(\pbar,\nubar):Q\longrightarrow Z\times Y.\]}
\end{corollary}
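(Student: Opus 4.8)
The plan is to deduce this directly from Theorem \ref{main}, applied to the Dirac realization $(\mu,\nu):M\too X\times Y$ and to the generalized reduction level of the product Dirac manifold $(X\times Y,L_X\times L_Y)$ obtained by crossing $S$ with $Y$. Explicitly, equip $S\times Y$ with the $2$-form $\mathrm{pr}_S^*\gamma$, pulled back along the projection $\mathrm{pr}_S:S\times Y\too S$, and with the constant-rank map $\varphi\times\Id_Y:S\times Y\too Z\times Y$. The first task is to verify the assertion made just before the corollary, namely that $(S\times Y,\mathrm{pr}_S^*\gamma,\varphi\times\Id_Y)$ is a generalized reduction level relative to $(Z\times Y,L_Z\times L_Y)$ in the sense of Definition \ref{genlevels}. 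This is routine, since pullback, pushforward and gauge transformation of a product Dirac structure are computed factorwise: $\imath_{S\times Y}^*(L_X\times L_Y)=(\imath^*L_X)\times L_Y$ pointwise, the gauge transform by $\mathrm{pr}_S^*\gamma$ only affects the first factor, and $(\varphi\times\Id_Y)_*$ splits as $\varphi_*\times(\Id_Y)_*$. Hence the first condition of Definition \ref{genlevels} reduces to $\varphi_*(\tau_{-\gamma}\imath^*L_X)=L_Z$, which holds by hypothesis, while any element of $L_X\times L_Y$ lying over $\ker(\varphi\times\Id_Y)_*=\ker\varphi_*\times\{0\}$ automatically has vanishing $Y$-components, so that
\[
A_{S\times Y,\,\mathrm{pr}_S^*\gamma,\,\varphi\times\Id_Y}=A_{S,\gamma,\varphi}\times\{0\}.
\]
This distribution has constant rank, and its anchor image $p_T(A_{S,\gamma,\varphi})\s\ker(\imath^*\eta+d\gamma)$ is exactly $\ker\big(\imath_{S\times Y}^*(\mathrm{pr}_X^*\eta+\mathrm{pr}_Y^*\zeta)+d\,\mathrm{pr}_S^*\gamma\big)$, again because the $Y$-components vanish.

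Next I would check that the hypotheses of Theorem \ref{main} transfer. Since $(\mu,\nu)^{-1}(S\times Y)=\mu^{-1}(S)=C$ and the clean-intersection condition constrains only the $X$-factor, $S\times Y$ intersects $(\mu,\nu)$ cleanly. Under the canonical identification of $A_{S\times Y,\,\mathrm{pr}_S^*\gamma,\,\varphi\times\Id_Y}$ with $A_{S,\gamma,\varphi}$, the action of the former on $C$ induced by the $L_X\times L_Y$-action on $M$ is precisely the $A_{S,\gamma,\varphi}$-action meant by the corollary: only $\mu$ and $\mu^*$ enter, because the relevant $Y$-components of both the algebroid elements and the pulled-back $1$-forms vanish. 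Consequently the two subalgebroids have the same orbits on $C$, their orbit spaces coincide, and this common orbit space is the manifold $Q=C/A_{S,\gamma,\varphi}$ posited in the statement, so Theorem \ref{main} applies.

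It then remains to read off the conclusions. Part \ref{8b2exmfb} gives $L_Q=\pi_*L_C$ with $L_C=\tau_{-(\mu,\nu)_C^*\mathrm{pr}_S^*\gamma}\,\jmath^*L_M=\tau_{-\mu_C^*\gamma}\,\jmath^*L_M$ and a twist $\theta$ characterized by $\pi^*\theta=(\mu,\nu)_C^*\big(\imath_{S\times Y}^*(\mathrm{pr}_X^*\eta+\mathrm{pr}_Y^*\zeta)+d\,\mathrm{pr}_S^*\gamma\big)$; expanding the pullbacks factorwise and using $\mathrm{pr}_S\circ(\mu,\nu)_C=\mu_C$ and $\mathrm{pr}_Y\circ(\mu,\nu)_C=\nu\circ\jmath$ turns this into
\[
\pi^*\theta=\mu_C^*(\imath^*\eta+d\gamma)+\jmath^*\nu^*\zeta,
\]
as claimed. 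Part \ref{bq5k3wpc} says that $(\varphi\times\Id_Y)\circ(\mu,\nu)_C=(\varphi\circ\mu_C,\nu\circ\jmath)$ descends to a Dirac realization $Q\too Z\times Y$, whose two components are by definition $\phibar$ and $\nubar$. The only thing demanding care is the factorwise bookkeeping for products of Dirac structures and the identification of the stabilizer subalgebroid; there is no real obstacle here, which is why the result is packaged as a corollary of Theorem \ref{main} rather than proved from scratch.
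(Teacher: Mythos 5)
Your proposal is correct and follows exactly the route the paper intends: it fills in the details of the remark immediately preceding the corollary, namely that $(S\times Y,\mathrm{pr}_S^*\gamma,\varphi\times\Id_Y)$ is a generalized reduction level of $(X\times Y,L_X\times L_Y)$ relative to $(Z\times Y,L_Z\times L_Y)$ with stabilizer subalgebroid canonically identified with $A_{S,\gamma,\varphi}$, after which Theorem \ref{main} applies verbatim and the twist formula falls out of the factorwise pullback computation. The paper presents this as an "immediate corollary" without spelling out these checks, and your write-up supplies precisely the bookkeeping that justifies that claim.
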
 

We conclude this section by giving a simple freeness criterion that determines when the $Z$-level $S$ is transverse to a strong Dirac map. It is a generalization of the classical fact that, when $M$ is a Hamiltonian $G$-space, the point $0\in\g^*$ is a regular value of the moment map if and only if the action of $G$ on the $0$-fiber is locally free. To state it, recall that the condition $\rho_C(v, \alpha)=0$ implies that $v = 0$, so we may view $\ker \rho_C$ as a subset of $T^*X$.

\begin{proposition}
\label{freeness}
We have
\[p_{X*}(TS) + \Phi_*(p_T(L_M))  = (\ker \rho_C)^\circ.\]
In particular, if $B_{S,\gamma}$ acts freely on $C$, then $S$ intersects $\Phi\times p_X$ transversally, and the converse also holds if $M$ is a nondegenerate Dirac manifold.
\end{proposition}
\begin{proof}
We have
\begin{align*}
(\ker \rho_C)^\circ &= \{u \in TX \mid \alpha(u) = 0\text{ for all }\alpha \in T^*X\text{ such that }\imath^*\alpha = 0\text{ and }(0, \Phi^*\alpha) \in L_M \}\\
					&= (TS^\circ \cap (\Phi^*)^{-1}(L_M \cap T^*M))^\circ\\
					&= (TS + \Phi_*(p_T(L_M)))^\circ,
\end{align*}
where the second equality follows from the identity $p_T(L_M)^\circ = L_M \cap T^*M$.
\end{proof}

%
%
%
%
%
%
%
\subsection{Familiar examples of reduction}
\label{2.4}
The reduction procedure of Theorem \ref{main} recovers a number of familiar constructions in symplectic and Poisson geometry.

\begin{example}(Symplectic reduction along a submanifold.)
Let $X$ be a Poisson manifold. A strong Dirac map to $X$ is simply a Poisson map $\Phi:M\longrightarrow X$ from a Poisson manifold $M$. By Example \ref{ex:poisson2}, whenever $S\subset X$ is a pre-Poisson submanifold which intersects $\Phi$ cleanly and such that the quotient 
\[Q=\Phi^{-1}(S)/B_S\]
is a manifold, Theorem \ref{main} equips $Q$ with a reduced Poisson structure as in \cite[Theorem 8.83]{cra.fer.mar:21}. In particular, if $M$ is a symplectic manifold, the reduced structure on $Q$ is also symplectic and was studied in \cite{cro.may:21}.
\end{example}

\begin{example}
\label{pointred}
(Dirac reduction at a point.)
Let $X$ be a Dirac manifold and let $\Phi:M\longrightarrow X$ be a strong Dirac map. In view of Example \ref{point}, whenever $x\in X$ is a regular value of $\Phi$ and the quotient
\[Q_x\coloneqq \Phi^{-1}(x)/B_x\]
is a manifold, we recover the usual Poisson structure obtained by reducing a strong Dirac map at a point \cite[Theorem 4.11]{bur.cra:05}.
\end{example}

\begin{example}
\label{orbitred}
(Dirac reduction along a leaf.)
Let $X$ be a twisted Dirac manifold and let $\O$ be a nondegenerate leaf of $X$ with presymplectic form $\omega$. By Example \ref{orbit}, $(\O,\omega)$ is a reduction level of $X$ with stabilizer subalgebroid $B_{\O,\omega}=L_{X\vert\O}.$ Given a strong Dirac map $\Phi:M\too X$, the reduced space
\[Q_\O\coloneqq \Phi^{-1}(\O)/L_{X\vert\O}\]
is Poisson. 

Moreover, fixing a point $x\in\O$ gives a local diffeomorphism
\[\psi:Q_x\too Q_\O\]
between the Dirac reduction of $M$ at $x\in X$ and the space $Q_\O$, which fits into the diagram
\begin{equation*}
\begin{tikzcd}[row sep=large, column sep=large]
\Phi^{-1}(x)\arrow[r, hook, "\imath"]\arrow[d,"q"]	&\Phi^{-1}(\O)\arrow[r, hook, "\jmath"]\arrow[d, "q"]	&M\\
Q_x\arrow[r, "\psi"]	&\,Q_\O.	&
\end{tikzcd}
\end{equation*}
Chasing this diagram, we get 
\begin{align*}
\psi^*q_{*}\tau_{\Phi^*\omega}\jmath^*L_M&=q_{*}\imath^*\tau_{\Phi^*\omega}\jmath^*L_M\\
		&=q_{*}\tau_{\Phi^*\omega}\imath^*\jmath^*L_M\\
		&=q_{*}\imath^*\jmath^*L_M
\end{align*}
where the first equality follows from \cite[Lemma 1.16]{bal:21} and the third from the fact that the restriction of $\Phi^*\omega$ to $\Phi^{-1}(x)$ vanishes. In other words, $\varphi$ is a local Dirac diffeomorphism between $Q_\O$ and the reduction $Q_x$ of Example \ref{pointred} which intertwines the Dirac structures 
\[q_{*}\tau_{\Phi^*\omega}\jmath^*L_M\quad\text{and}\quad q_{*}\imath^*\jmath^*L_M.\]
\end{example}

In the special case of Hamiltonian Poisson and quasi-Poisson manifolds, Example \ref{orbitred} specializes to the usual Hamiltonian reduction along group orbits.

\begin{example}
(Marsden--Weinstein reduction along a coadjoint orbit.)
Let $\g$ be a Lie algebra and let $\mathcal{O}$ be a coadjoint orbit in $\g^*$ with symplectic form $\omega$. By Example \ref{orbit}, $(\O,\omega)$ is a reduction level of $\g^*$ with stabilizer subalgebroid
\[B_{\O,\omega}=\O\rtimes\g.\]
A strong Dirac map to $\g^*$ is given by a Hamiltonian Poisson $\g$-manifold $M$ with moment map $\Phi:M\too\g^*$. If the space
\[\Phi^{-1}(\mathcal{O})/B_{\O,\omega}=\Phi^{-1}(\mathcal{O})/\mathfrak{g}\]
is a manifold then its reduced Poisson structure coincides with the usual Marsden--Weinstein reduction \cite{mar.wei:74} along a coadjoint orbit.
\end{example}

\begin{example}
(Quasi-Poisson reduction along a conjugacy class.)
Let $G$ be a Lie group whose Lie algebra carries a nondegenerate inner product. Then $G$ has a Cartan--Dirac structure, and we let $\mathcal{O}\subset G$ be a conjugacy class with quasi-Hamiltonian $2$-form $\omega$. By Example \ref{orbit}, $(\mathcal{O},\omega)$ is a reduction level of $G$ with stabilizer subalgebroid
\[B_{\mathcal{O},\omega}=\mathcal{O}\rtimes\g.\]
A strong Dirac map to $G$ is a Hamiltonian quasi-Poisson $G$-manifold $M$ equipped with a moment map $\Phi:M\longrightarrow G$. If the reduced space
\[\Phi^{-1}(\mathcal{O})/B_{\mathcal{O},\omega}=\Phi^{-1}(\mathcal{O})/\g\]
is a manifold, Theorem \ref{main} equips it with a Poisson structure which coincides with the quasi-Hamiltonian reduction of \cite[Theorem 5.1]{ale.mal.mei:98} and \cite[Theorem 6.1]{ale.kos.mei:02}.
\end{example}

\begin{example}
\label{steinslice}
(Steinberg slices in quasi-Poisson manifolds.)
Let $G$ be a simply-connected, semisimple complex Lie group equipped with the Cartan--Dirac structure. The \emph{Steinberg cross-section} of $G$ is an affine space $\Sigma\subset G$ which consists entirely of regular elements and intersects every regular conjugacy class of $G$ exactly once and transversally \cite[Theorem 1.4]{ste:65}. This slice is a Poisson transversal in $G$, so by Lemma \ref{ptrans} it is a reduction level with trivial stabilizer subalgebroid. Moreover, if $\Phi:M\longrightarrow G$ is a quasi-Poisson moment map we have
\[T\Sigma+\im\Phi_*\supset T\Sigma+p_T(L_G)=TG\]
and therefore $\Sigma$ is transverse to $\Phi$. It then follows from Theorem \ref{main} that the pullback of the Dirac structure on $X$ to the preimage
\[\Phi^{-1}(\Sigma)\]
is a Poisson structure, recovering \cite[Theorem 2.3]{bal:21}.
\end{example}

\begin{example}
\label{unired}
(Parabolic reduction.)
Let $G$ be a reductive complex Lie group with Lie algebra $\g$, let $\p$ be a parabolic subalgebra of $\g$ with nilradical $\uu$, and write $P$ for the corresponding parabolic subgroup of $G$. Example \ref{unips} shows that $\uu$ is a reduction level of $\g$. If $\Phi:M\too\g$ is a Hamiltonian $G$-space, Theorem \ref{main} gives a Poisson structure on the quotient
\[\Phi^{-1}(\uu)/P.\]
This structure coincides with the following familiar Hamiltonian reduction. Since $M$ is also Hamiltonian for the action of $P$, there is a commutative diagram of moment maps
\begin{equation*}
\begin{tikzcd}[column sep=tiny]
M\arrow[rrd, "\Phi"]\arrow[dd,swap,"\Phi_P"]	&&\\
					&&\g\arrow[ld]\\
\,\,\quad\p^*\cong&\hspace{-.15in}\g/\uu.	&
\end{tikzcd}
\end{equation*}
Taking Hamiltonian reduction relative to $\Phi_P$, we obtain a Poisson structure on the quotient
\[\Phi_P^{-1}(0)/P=\Phi^{-1}(\uu)/P,\]
which agrees with the Poisson structure given by Theorem \ref{main}. In particular, when $M=T^*G$ is the cotangent bundle of $G$, we obtain the usual symplectic structure on the cotangent bundle
\[T^*(G/P)\cong G\times_P\uu\]
of the partial flag variety $G/P$.
\end{example}

\begin{example}
\label{parared}
(Unipotent reduction.)
Keeping the notation of Example \ref{unired}, let $\p=\l+\uu$ be a Levi decomposition with projection
\[\varphi:\p\too\l.\]
By Example \ref{parabs}, $\p$ is an $\l$-level of $\g$. If $\Phi:M\too\g$ is a Hamiltonian $G$-space, Theorem \ref{main} shows that the quotient of $\Phi^{-1}(\p)$ by the action of $U$ is a Hamiltonian Poisson $L$-space with moment map 
\[\phibar:\Phi^{-1}(\p)/U\too \l.\] As before, we may restrict to the action of $U$ to obtain the commutative diagram of moment maps
\begin{equation*}
\begin{tikzcd}[column sep=tiny]
M\arrow[rrd, "\Phi"]\arrow[dd,swap,"\Phi_U"]	&&\\
					&&\g\arrow[ld]\\
\,\,\quad\uu^*\cong&\hspace{-.15in}\g/\p	&
\end{tikzcd}
\end{equation*}
and reduce along $\Phi_U$. This gives a Poisson structure on the quotient
\[\Phi^{-1}_U(0)/U=\Phi^{-1}(\p)/U\]
which agrees with the structure obtained from Theorem \ref{main}. When $M=T^*G$, we obtain in this way the usual symplectic structure on the cotangent bundle
\[T^*(G/U)\cong G\times_U\p\]
of the base affine space $G/U$, for which the right action of $L$ is Hamiltonian.
\end{example}

\begin{example}
\label{sympimp}
(Symplectic implosion.)
Let $\g$ be the Lie algebra of a compact Lie group as in Example \ref{faces}, and fix a choice of closed dominant Weyl chamber
\[\t^*_+=\bigcup_{J\subset\Delta}\sigma_J\subset\t^*.\]
The inclusion map $\imath:\t^*_+\hooklongrightarrow\t^*$ makes $\t^*_+$ a stratified $\t$-level in the sense of Remark \ref{strata2}. If $\Phi:M\too \g^*$ is a symplectic Hamiltonian $G$-space, the reduced space 
\[\bigsqcup_J\Phi^{-1}(\sigma_J)/[G_J,G_J]\]
is a union of manifolds \cite[Theorem 5.10]{gui.jef.sja:02}. In view of Remark \ref{strata1}, this gives it the structure of a stratified Hamiltonian $T$-space with moment map $\overline{\imath}$, recovering the \emph{imploded cross-section} of $M$ developed in \cite[Theorem 2.10]{gui.jef.sja:02}.
\end{example}

\begin{example}
\label{qhamimp} 
(Quasi-Hamiltonian implosion.)
Let $G$ be a simply-connected compact Lie group equipped with a Cartan--Dirac quasi-Poisson structure, and fix a choice of closed dominant alcove $\overline{\mathcal{A}}$ as in Example \ref{alcove}. Its image under the exponential map then decomposes as a disjoint union
\[\exp(\bar{\mathcal{A}})=\bigsqcup_\sigma S_J,\]
so that inclusion map $\imath:\exp(\bar{\mathcal{A}})\hooklongrightarrow T$ gives $\exp(\bar{\mathcal{A}})$ the structure of a stratified $T$-level. If $\Phi:M\longrightarrow G$ is a quasi-Hamiltonian $G$-space, the reduced space 
\[\bigsqcup_J\Phi^{-1}(S_J)/[G_J,G_J]\]
is once again a union of smooth manifolds \cite[Corollary 3.12]{hur.jef.sja:06}, so it has the structure of a stratified quasi-Hamiltonian $T$-space with moment map $\overline{\imath}$. This recovers the quasi-Hamiltonian imploded cross-section of \cite[Theorem 3.17]{hur.jef.sja:06}.\\
\end{example}

%
%
%
%
%
%
%
\section{Fiber products and universality}
\label{third}
Fiber products over strong Dirac maps can be viewed as preimages of reduction levels, and this perspective has a number of immediate applications. After introducing it in Section \ref{3.1}, we apply it in Section \ref{3.2} to obtain an interpretation of the quasi-Poisson reduction of a fusion product in terms of Dirac geometry. In Section \ref{3.3}, we show that the Dirac reduction $\G_S$ of the presymplectic groupoid $\G\tto X$ at a given level is a ``universal'' reduced space, in the sense that any reduction along a strong Dirac map $\Phi: M\too X$ can be obtained from $\G_S$ by a quotient of the fiber product $M\times_X\G_S$.

\subsection{Fiber products of strong Dirac maps}
\label{3.1}
If $(X,L_X)$ is an $\eta_X$-twisted Dirac manifold, once again we write $\overline{X}$ for the manifold $X$ equipped with the opposite Dirac structure $-L_X$. Then the product $X\times\overline{X}$ carries an $(\eta_X,-\eta_X)$-twisted product Dirac structure, and we let $X_\Delta$ be the diagonal submanifold of this space.

\begin{lemma}
The subspace $X_\Delta$ is a reduction level of $X\times\overline{X}$.
\end{lemma}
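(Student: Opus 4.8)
The plan is to verify that the diagonal $X_\Delta \subset X \times \Xbar$, equipped with the zero $2$-form $\gamma = 0$, satisfies the two conditions of Definition \ref{levels}: that the distribution $A_{X_\Delta, 0} = L_{X\times\Xbar} \cap (TX_\Delta \oplus TX_\Delta^\circ)$ has constant rank, and that $p_T(A_{X_\Delta,0})$ lies in the kernel of $(\imath^*(\eta,-\eta) + d\cdot 0) = \imath^*(\eta, -\eta)$. The key observation is that the diagonal inclusion $\imath : X_\Delta \to X \times \Xbar$ can be identified with the map $X \to X \times \Xbar$, $x \mapsto (x,x)$, and I expect the pullback $\imath^*L_{X\times\Xbar}$ to be computable explicitly in terms of $L_X$ and $-L_X$.

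First I would compute $\imath^* L_{X\times\Xbar}$ pointwise. Using the definition of the pullback distribution, a pair $(v, \imath^*\alpha)$ lies in $\imath^*L_{X\times\Xbar}$ at a point iff $(\imath_* v, \alpha) \in L_X \times (-L_X)$; writing $v = v$ under the identification $X_\Delta \cong X$ and $\alpha = (\alpha_1, \alpha_2) \in T^*X \oplus T^*X$, this says $(v, \alpha_1) \in L_X$ and $(v, -\alpha_2) \in L_X$, while $\imath^*\alpha = \alpha_1 + \alpha_2$. So a $1$-form $\beta$ on $X_\Delta$ arises as $\imath^*\alpha$ for such an $\alpha$ exactly when $\beta = \alpha_1 + \alpha_2$ with $(v,\alpha_1), (v,-\alpha_2) \in L_X$, i.e.\ $(v,\beta) \in L_X - L_X$ in the sense of the sum of Dirac structures from the previous subsection. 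Thus $\imath^* L_{X\times\Xbar} = L_X - L_X$, the "difference" Dirac structure on $X$. This is a standard fact and I would cite the discussion of sums of Dirac structures; note $L_X - L_X$ is automatically Lagrangian, and its kernel is $\ker(L_X - L_X) = \{(v,0) : (v,\alpha), (v,\alpha) \in L_X\} = p_T(L_X)$ (one checks $v \in p_T(L_X)$ with $(v,\alpha) \in L_X$ gives $(v, 0) = (v,\alpha) + (v,-\alpha) \in L_X - L_X$, and conversely). Since $p_T(L_X)$ is an integrable distribution of locally constant rank on each nondegenerate leaf — and more to the point, $\ker(L_X - L_X)$ is the anchor-image-related kernel that one shows has constant rank along $X$ because $L_X - L_X$ is a genuine smooth Dirac structure — the rank is controlled. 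Concretely, $A_{X_\Delta, 0} \cong \ker \imath^* L_{X\times\Xbar} = p_T(L_X)$ via the isomorphism $A_{S} \cong \ker\imath^*L_X$ used in Lemma \ref{ptrans} (here applied with $X$ replaced by $X \times \Xbar$), so I need $p_T(L_X)$ to have constant rank; but $p_T(L_X)$ is the tangent distribution to the nondegenerate foliation, which need not have globally constant rank. The honest move is instead to argue that $\imath^*L_{X\times\Xbar} = L_X - L_X$ is a smooth subbundle — which follows because it is the image of the smooth bundle map $L_X \times_X L_X \to \TM$, $((v,\alpha),(v,\beta)) \mapsto (v, \alpha-\beta)$, whose kernel $\{((v,\alpha),(v,\alpha))\}$ has constant rank equal to $\operatorname{rk} L_X$ — hence by \cite[Section 3.1]{cou:90} its rank, and therefore the rank of $\ker \imath^* L_{X\times\Xbar} = A_{X_\Delta,0}$, is locally constant; on a connected $X$ this gives constant rank. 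This is the step I expect to be the main obstacle: pinning down why the intersection $L_{X\times\Xbar} \cap (TX_\Delta \oplus TX_\Delta^\circ)$ has constant rank, which ultimately reduces to the smoothness of the sum $L_X - L_X$.

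For the second condition, I would show $p_T(A_{X_\Delta,0}) \subseteq \ker \imath^*(\eta,-\eta)$. Under the identification above, $p_T(A_{X_\Delta,0}) = p_T(L_X - L_X) \subseteq p_T(L_X)$ (every element is a $v$ with $(v,\alpha)\in L_X$), and $\imath^*(\eta, -\eta)$ corresponds to the pullback of $(\eta,-\eta)$ along the diagonal, which is $\eta - \eta = 0$. So $\imath^*(\eta,-\eta) = 0$ identically, and the containment is trivial. Finally, invoking Proposition \ref{subalg}, $A_{X_\Delta,0}$ is then automatically a Lie subalgebroid, so $(X_\Delta, 0)$ is a bona fide reduction level; as noted in Remark \ref{uvxtvk5e}, since the twist pulls back to zero along the diagonal we may drop $\gamma$ from the notation and simply write $X_\Delta$ for this reduction level.
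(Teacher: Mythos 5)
Your computation of the pullback $\imath^*L_{X\times\Xbar}=L_X-L_X$ and of the twist ($\imath^*(\eta,-\eta)=\eta-\eta=0$ on the diagonal) are both correct, but the argument for constant rank has a genuine gap. You identify $A_{X_\Delta,0}$ with $\ker\imath^*L_{X\times\Xbar}$ via the isomorphism from Lemma \ref{ptrans}; that isomorphism requires the transversality condition $L_{X\times\Xbar}\cap TX_\Delta^\circ=0$, which here amounts to $L_X\cap T^*X=0$, i.e.\ to $L_X$ being presymplectic. In general there is only a surjection
\[
A_{X_\Delta,0}=L_{X\times\Xbar}\cap(TX_\Delta\oplus TX_\Delta^\circ)\ \too\ \ker\imath^*L_{X\times\Xbar}\cong p_T(L_X)
\]
with kernel $L_{X\times\Xbar}\cap TX_\Delta^\circ\cong L_X\cap T^*X=p_T(L_X)^\circ$. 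You correctly notice that $p_T(L_X)$ need not have constant rank, which should have signalled that the quantity you are computing is the wrong one; the attempted salvage does not repair this. Smoothness of $L_X-L_X$ as a Lagrangian family says nothing about the rank of its kernel (a smooth Dirac structure always has constant rank $\dim X$ while its kernel can jump), the domain $L_X\times_{TX}L_X$ of your bundle map is itself not of constant rank, and the kernel of the map $((v,\alpha),(v,\beta))\mapsto(v,\alpha-\beta)$ is $\{((0,\alpha),(0,\alpha))\}\cong L_X\cap T^*X$, not the full diagonal copy of $L_X$ that you assert.

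The fix is to compute the intersection directly rather than passing through the pullback: the condition $(\alpha,\beta)\in TX_\Delta^\circ$ forces $\beta=-\alpha$, and then membership in $L_{X\times\Xbar}$ requires $(u,\alpha)\in L_X$ and $(u,-\beta)=(u,\alpha)\in L_X$ --- one and the same condition --- so
\[
A_\Delta=\{((u,\alpha),(u,-\alpha))\mid(u,\alpha)\in L_X\}\cong L_X,
\]
which manifestly has constant rank $\dim X$. This is the paper's one-line argument. The point is that $A_\Delta$ is a copy of $L_X$ itself, not of $p_T(L_X)$; the two differ precisely by the quotient by $p_T(L_X)^\circ$ that your identification discards.
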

\begin{proof}
The pullback of $(\eta_X,-\eta_X)$ to $X_\Delta$ vanishes since the tangent bundle $TX_\Delta\subset TX\times T\overline{X}$ consists of diagonal pairs of tangent vectors. Moreover 
\[L_{X\times\overline{X}}=\{((u,\alpha),(v,\beta))\mid (u,\alpha),(v,-\beta)\in L_X\},\]
so it follows that 
\[B_\Delta\coloneqq L_{X\times\overline{X}}\cap (TX_\Delta\oplus TX_\Delta^\circ)=\{((u,\alpha),(u,-\alpha))\in L_{X\times\overline{X}}\}\]
has constant rank.
\end{proof}

If $\Phi:M\longrightarrow X$ and $\Psi:N\longrightarrow \overline{X}$ are two strong Dirac maps with the property that the diagonal $X_\Delta$ has clean intersection with $(\Phi,\Psi)$, then the fiber product $M\times_XN$ is smooth. If the quotient $(M\times_XN)/B_\Delta$ is a manifold, Theorem \ref{main} equips it with a Poisson structure. In particular, by letting $\Psi$ be the identity map we recover the canonical induced Poisson structure on the quotient $M/L_X$ \cite[Proposition 3.14]{bur.igl.sev:09}. 

\begin{proposition}
\label{quotients}
Let $\Phi:M\too X$ be a strong Dirac map, and suppose that the quotient $M/L_X$ is a manifold with quotient map $\pi:M\too M/L_X.$ 
\begin{enumerate}[label=\textup{(\arabic*)}]
\item The pushforward {$\pi_*(L_M-\Phi^*L_X)=\pi_*L_M$} is a Poisson structure on $M/L_X$.
\item If $M$ is nondegenerate, the symplectic leaves of this structure are the reductions
\[\Phi^{-1}(\O)/L_X\]
of $M$ along the reductions $(\O,\omega)$ given by the nondegenerate leaves of $X$.
\end{enumerate}
\end{proposition}

\begin{proof}
(1) Consider the strong Dirac map $(\Phi,\Id):M\times\Xbar\too X\times\Xbar$. The reduction level $X_\Delta\subset X\times\Xbar$ is transverse to $(\Phi,\Id)$, and fits into the diagram
\begin{equation*}
\begin{tikzcd}[row sep=large, column sep=large]
M\arrow{r}{(\Id,\Phi)}[swap]{\sim}\arrow[d,swap,"\Phi"]	&M\times_X\Xbar\arrow[r, hook, "\jmath"]\arrow[d, "\Phi_\Delta"]	&M\times\Xbar\arrow[d, "\Phi\times\Id"]\\
X\arrow{r}[swap]{\sim}						&X_\Delta		\arrow[r, hook]							&X\times\Xbar.
\end{tikzcd}
\end{equation*}
Here the right-hand square is Cartesian, and the horizontal isomorphisms in the left-hand square intertwine the actions of $L_X$ and $B_\Delta$, so that we obtain a second commutative diagram
\begin{equation*}
\begin{tikzcd}[row sep=large, column sep=large]
M\arrow{r}{(\Id,\Phi)}[swap]{\sim}\arrow[d,swap,"\pi"]	&M\times_X\Xbar\arrow[d, "\pi_\Delta"]\\
M/L_X\arrow{r}{\varphi}[swap]{\sim}						&(M\times_X\Xbar)/B_\Delta.
\end{tikzcd}
\end{equation*}
By Theorem \ref{main}, there is a Poisson structure on $M/L_X$ given by
\[\varphi^*\pi_{\Delta*}\jmath^*(L_M\times L_{\Xbar})=\pi_*(\Id,\Phi)^*\jmath^*(L_M\times L_{\Xbar}) =\pi_*(L_M-\Phi^*L_X),\]
where the first equality follows from \cite[Lemma 1.16]{bal:21}.

{Moreover, note that this Poisson structure agrees with the pushforward $\pi_*L_M$: since both Dirac structures are Poisson \cite[Proposition 3.14]{bur.igl.sev:09}, each $\alpha\in T^*X$ corresponds to unique points
\[(\pi_*v,\alpha)\in \pi_*(L_M-\Phi^*L_X)\quad\text{and}\quad (\pi_*w,\alpha)\in\pi_*L_M.\]
It follows from the left-hand side that $\pi^*\alpha=\beta-\Phi^*\gamma$ where $(v,\beta)\in L_M$ and $(\Phi_*v,\gamma)\in L_X$, and from the right-hand side that $(w,\beta-\Phi^*\gamma)\in L_M$. We obtain
\[(v-w, \Phi^*\gamma)\in L_M,\]
and therefore $v-w$ is in the image of the action of $L_X$. In particular, $v-w\in\ker\pi_*$, so $\pi_*v=\pi_*w$ and $\pi_*(L_M-\Phi^*L_X)=\pi_*L_M$.}

(2) Suppose now that $M$ is nondegenerate with presymplectic form $\Omega$, and that
\[X=\sqcup(\O,\omega)\]
is the partition of $X$ into nondegenerate leaves. Then the presymplectic foliation of $M\times\Xbar$ is given by
\[M\times\Xbar=\sqcup(M\times\O,\Omega-\omega),\]
and by Theorem \ref{main}\ref{4s6edzna} the nondegenerate leaves of $M/L_X$ are all of the form
\[\varphi^{-1}(\pi_\Delta(M\times_\O\overline{\O}))=\Phi^{-1}(\O)/L_X.\]
Considering the diagram
\begin{equation*}
\begin{tikzcd}[row sep=large, column sep=large]
\Phi^{-1}(\O)\arrow[r, hook, "{\jmath_\O}"]\arrow[d, swap, "\pi"]	&M\arrow[d, "\pi"]\\
\Phi^{-1}(\O)/L_X\arrow[r, hook, "{\imath_\O}"]			&M/L_X,
\end{tikzcd}
\end{equation*}
the Dirac structure on each leaf is 
\[\imath_\O^*\pi_*(L_M-\Phi^*L_X)=\pi_*\jmath_\O^*(L_M-\Phi^*L_X)=\pi_*\tau_{-\Phi^*\omega}\jmath^*L_M=\pi_*L_{\Phi^{-1}(\O)},\]
where $L_{\Phi^{-1}(\O)}$ is the Dirac structure defined on $\Phi^{-1}(\O)$ by \eqref{twistpull} when it is viewed as the pullback of the reduction level $(\O,\omega)$ of $X$. Therefore the symplectic structure on $\Phi^{-1}(\O)/L_X$ is precisely the one obtained in Example \ref{orbitred} by reduction at $(\O,\omega)$.
\end{proof}

In particular, when $\g$ is a Lie algebra, $X=\g^*$ is equipped with the Kirillov--Kostant--Souriau Poisson structure, and 
\[\Phi:M\too\g^*\]
is the moment map of a Hamiltonian $G$-manifold, Proposition \ref{quotients} gives a Poisson structure on the quotient $M/G$ which coincides with the usual Poisson structure obtained by restricting the Poisson bracket of $M$ to the subring of invariant functions. Similarly, when $X=G$ is a Lie group equipped with the Cartan--Dirac structure and 
\[\Phi:M\too G\]
is the moment map of a Hamiltonian quasi-Poisson $G$-manifold, Proposition \ref{quotients} recovers the usual Poisson structure on the quotient $M/G$ obtained by pushing forward the quasi-Poisson bivector.

%
%
%
%
%
%
%
\subsection{Hamiltonian and quasi-Hamiltonian reduction}
\label{3.2}
Suppose that $M$ and $N$ are Hamiltonian Poisson $\g$-manifolds with moment maps $\mu:M\too\g^*$ and $\nu:N\too\g^*.$ Then the Poisson structure on the product space $M\times N$ is Hamiltonian for the diagonal action of $\g$, and the corresponding moment map is
\begin{align*}
\Phi:M\times N&\longrightarrow \quad\g^*\\
	(m,n)&\longmapsto\mu(m)+\nu(n).
	\end{align*}
In this case the Hamiltonian reduction $\Phi^{-1}(0)/\g$ of $M\times N$ is precisely the Dirac reduction of the Poisson map
\[(\mu,-\nu):M\times N\too\g^*\times\overline{\g^*}\]
along the reduction level $\g^*_\Delta$.

This observation has the following quasi-Poisson counterpart. Let $G$ be a Lie group whose Lie algebra $\g$ carries an invariant nondegenerate symmetric bilinear form, and suppose that $(M,\pi)$ is a Hamiltonian quasi-Poisson $G\times G$-manifold with moment map
\[(\Phi_1,\Phi_2):M\too G\times G.\]
Once again letting $\{e_i\}\subset\g$ be an orthonormal basis, we define the $2$-tensor
\[\psi\coloneqq \frac{1}{2}\sum(e_i,0)\wedge(0,e_i)\in\wedge^2(\g\oplus\g).\]
The bivector 
\begin{equation}
\label{fusion}
\pi_{\text{fus}}\coloneqq \pi+\psi_M.
\end{equation}
defines a Hamiltonian quasi-Poisson structure on $M$ relative to the diagonal action of $G$, and the corresponding moment map is the product $\Phi_1\Phi_2$ of the original moment map components \cite[Section 5]{ale.kos.mei:02}. The triple
\[(M,\pi_{\text{fus}},\Phi_1\Phi_2)\]
is called the \emph{internal fusion} of the quasi-Poisson $G\times G$-manifold $M$.

If $(M,\pi_M,\Phi_M)$ and $(N,\pi_N,\Phi_N)$ are Hamiltonian quasi-Poisson $G$-manifolds, the product $M\times N$ is naturally a quasi-Poisson $G\times G$-manifold. The \emph{fusion product} of $M$ and $N$, denoted $M\circledast N$, is the Hamiltonian quasi-Poisson $G$-manifold 
\[\left(M\times N, (\pi_M+\pi_N)_{\text{fus}}, \Phi_M\Phi_N\right).\]
If $1$ is a regular value of the moment map $\Phi_M\Phi_N$ and the quotient
\[(\Phi_M\Phi_N)^{-1}(1)/G\]
is a manifold, quasi-Poisson reduction \cite[Theorem 6.1]{ale.kos.mei:02} gives this quotient a Poisson structure.

On the other hand, let $\overline{G}$ be the group $G$ equipped with the opposite Cartan--Dirac structure $-L_G$. Then the composition
\[\iota\circ\Phi_N:N\too\overline{G}\]
of the moment map $\Phi_N$ with the group inversion is a strong Dirac map, and we may consider the fiber product
\[M\times_GN=\left\{(m,n)\in M\times N\mid \Phi_M(m)=\Phi_N(n)^{-1}\right\}=(\Phi_M\Phi_N)^{-1}(1).\]
Its quotient by the action of $G$, if it exists, has a Poisson structure obtained by applying Theorem \ref{main} to the diagonal. 

\begin{proposition}\label{ooysgjbr}
The Dirac structure on the reduction of the strong Dirac map $M\times N\too G\times\overline{G}$ along the diagonal reduction level $G_\Delta$ agrees with the quasi-Poisson reduction of $M\circledast N$ at the identity.
\end{proposition}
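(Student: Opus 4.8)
The plan is to realize both reduced structures as applications of Theorem~\ref{main} carried out with a vanishing $2$-form, and then to show that they come from one and the same Dirac structure on a common submanifold of $M\times N$.

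First I would line up the underlying data. Under the isomorphism $L_G\cong G\rtimes\g$ of Example~\ref{cd}, the stabilizer subalgebroid $A_\Delta$ of the diagonal reduction level $G_\Delta\subset G\times\overline{G}$ is identified with the action Lie algebroid $G_\Delta\rtimes\g$ of the diagonal adjoint action, so reducing along $G_\Delta$ amounts to quotienting by the diagonal $\g$-action. Since $\Phi_N^{-1}$ denotes $n\mapsto\Phi_N(n)^{-1}$, we have $M\times_GN=(\Phi_M\Phi_N)^{-1}(1)$, and on this submanifold the $A_\Delta$-action is the restriction of the diagonal $G\times G$-action, which is exactly the action entering the quasi-Poisson reduction of $M\circledast N$ at the identity. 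Thus the two quotients agree as manifolds; write $C\coloneqq M\times_GN$ and let $\pi:C\too Q$ be the common quotient map, with $\jmath:C\hooklongrightarrow M\times N$. On the fiber-product side the reduction level is $(G_\Delta,0)$, and Proposition~\ref{fiberprod} (an instance of Theorem~\ref{main}) produces the Dirac structure $\pi_*\jmath^*(L_M\times L_N)$. On the fusion side, $(M\circledast N,L_{\text{fus}})\xrightarrow{\Phi_M\Phi_N}G$ is a Dirac realization by Example~\ref{ex:qpoisson1}, and reducing it at the point $\{1\}$ via Theorem~\ref{main} recovers the quasi-Poisson reduction of $M\circledast N$ by Example~\ref{pointred}; the outcome is $\pi_*\jmath^*L_{\text{fus}}$. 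Both reduction levels carry the zero $2$-form, and both twists ($-(\Phi_M\Phi_N)^*\eta$ on the one hand and $-\mathrm{pr}_M^*\Phi_M^*\eta-\mathrm{pr}_N^*\Phi_N^*\eta$ on the other) restrict to $0$ along $C$, so it suffices to prove the equality of Dirac structures
\[\jmath^*(L_M\times L_N)=\jmath^*L_{\text{fus}}\]
on $C$.

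Second I would compute both sides from the explicit model \eqref{qpdirac}. The key simplification is that $\Phi_M\Phi_N\equiv 1$ on $C$, so $d(\Phi_M\Phi_N)$ vanishes along $C$; hence $(\Phi_M\Phi_N)^*(\theta_i^L-\theta_i^R)$ and $(\Phi_M\Phi_N)^*\sigma(\xi)$ restrict to $0$ on $C$. This makes the bundle map $C^*$ attached to the fusion moment map restrict to the identity and annihilates its moment-map term, so that
\[\jmath^*L_{\text{fus}}=\bigl\{\bigl(v,\jmath^*(\alpha,\beta)\bigr)\ \big|\ v=\pi_{\text{fus}}^\#(\alpha,\beta)+(\xi_M,\xi_N)\in TC,\ (\alpha,\beta)\in T^*(M\times N),\ \xi\in\g\bigr\}.\]
Writing $\pi_{\text{fus}}^\#=\pi_M^\#\oplus\pi_N^\#+\psi_{M\times N}^\#$ with $\psi$ as in \eqref{fusion} and absorbing the $\psi$-term together with $(\xi_M,\xi_N)$ into generators $a_M$, $b_N$ for suitable $a,b\in\g$ shows that the tangent part of every element of $\jmath^*L_{\text{fus}}$ is also the tangent part of an element of $\jmath^*(L_M\times L_N)$.

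The remaining — and, I expect, the only delicate — point is that the \emph{covector} parts match. The corrections $C_M^*\alpha+\Phi_M^*\sigma(a)$ and $C_N^*\beta+\Phi_N^*\sigma(b)$ produced by the two factors of $L_M\times L_N$ do not vanish on $C$ individually, since $\Phi_M$ and $\Phi_N$ are separately nonconstant there; one must see that they cancel against one another and against the $\psi$-correction. Differentiating $\Phi_M\Phi_N\equiv1$ along $C$ and using the chain rule for the Maurer--Cartan forms under multiplication gives, on $C$,
\[\jmath^*\Phi_N^*\theta^L=-\jmath^*\Ad_{\Phi_M}\Phi_M^*\theta^L,\qquad \jmath^*\Phi_N^*\theta^R=-\jmath^*\Ad_{\Phi_M^{-1}}\Phi_M^*\theta^R,\]
and, combined with $\Ad_{\Phi_M}\Phi_M^*\theta^L=\Phi_M^*\theta^R$, these identities turn the $C_M^*$, $C_N^*$ and $\sigma$ terms of the two factors precisely into the $\psi_{M\times N}^\#$-correction of $\pi_{\text{fus}}$; equivalently, the fusion $2$-form restricts to $0$ on $C$, so no residual gauge transform survives. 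This is the infinitesimal shadow of the identity $m^*\eta=\mathrm{pr}_1^*\eta+\mathrm{pr}_2^*\eta-d\varpi$ for the multiplication $m:G\times G\to G$, and it can be checked directly in an orthonormal basis $\{e_i\}$ of $\g$. Granting this, both inclusions hold, so $\jmath^*(L_M\times L_N)=\jmath^*L_{\text{fus}}$ on $C$; applying $\pi_*$ then yields the asserted equality of reduced Dirac — here Poisson — structures.
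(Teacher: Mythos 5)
Your proposal is correct and follows essentially the same route as the paper: both reduce the statement to the pointwise equality $\jmath^*(L_M\times L_N)=\jmath^*L_{M\circledast N}$ on the fiber product, expand both sides via \eqref{qpdirac}, exploit $\jmath^*\circ(\Phi_M\Phi_N)^*=0$, and cancel the covector corrections using the Maurer--Cartan identities $\jmath^*\Phi_N^*\theta^L=-\jmath^*\Phi_M^*\theta^R$ and $\jmath^*\Phi_N^*\theta^R=-\jmath^*\Phi_M^*\theta^L$ coming from $\Phi_N=\Phi_M^{-1}$ on $C$. The paper delegates that final cancellation to the computation in the proof of \cite[Lemma 2.11]{bal:21} and gets the equality from a single inclusion plus a rank count, whereas you sketch the cancellation directly; these are only cosmetic differences.
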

\begin{proof}
Consider the inclusion $\jmath:M\times_GN\hooklongrightarrow M\times N$ and let $q:M\times_GN\too (M\times_GN)/G$ be the quotient map. To show that
\[q_*\jmath^*L_{M\circledast N}=q_*\jmath^*(L_M\times L_N),\]
it is sufficient to prove that
\[\jmath^*L_{M\circledast N}=\jmath^*(L_M\times L_N)\]
by showing that the left-hand side is contained in the right-hand side.

Suppose that $(v,\jmath^*\gamma)\in \jmath^*L_{M\circledast N}.$ According to \eqref{qpdirac} and \eqref{fusion}, this implies that 
\[v=\pi_M^\#(\alpha)+\pi_N^\#(\beta)+\rho_M(\xi-\xi_\beta)+\rho_N(\xi+\xi_\alpha)\]
and
\[\gamma=C^*_{M\circledast N}(\alpha+\beta)+(\Phi_M\Phi_N)^*\sigma(\xi).\]
Here $\alpha\in T^*M$, $\beta\in T^*N$, $\xi\in \g$, and we write
\[\xi_\alpha\coloneqq \sum\alpha(e_{iM})e_i\quad\text{and}\quad\xi_\beta\coloneqq \sum\beta(e_{iN})e_{i}.\]
It is then enough to show that the pullback $\jmath^*\gamma$ agrees with
\[\jmath^*(C^*_M(\alpha)+C^*_N(\beta)+\Phi_M^*\sigma(\xi-\xi_\beta)+\Phi_N\sigma(\xi+\xi_\alpha)).\]

The equality
\begin{equation}
\label{xyz}
\jmath^*\circ(\Phi_M\Phi_N)^*=0
\end{equation}
implies that
\[\jmath^*\gamma=\jmath^*C^*_{M\circledast N}(\alpha+\beta)+\jmath^*(\Phi_M\Phi_N)^*\sigma(\xi)=\jmath^*\alpha+\jmath^*\beta.\]
Moreover, as in the proof of \cite[Lemma 2.11]{bal:21}, composition \eqref{xyz} implies that $\xi_\alpha=-\xi_\beta$ and that
\[\jmath^*\Phi_N^*(\theta_i^L)=-\jmath^*\Phi_M(\theta^R)\quad\text{and}\quad\jmath^*\Phi_N^*(\theta_i^R)=-\jmath^*\Phi_M(\theta^L).\]
We therefore obtain
\[\jmath^*(C^*_M(\alpha)+C^*_N(\beta))+\jmath^*(\Phi_M^*\sigma(\xi-\xi_\beta)+\Phi_N\sigma(\xi+\xi_\alpha))=\jmath^*\alpha+\jmath^*\beta,\]
proving the desired equality. It follows that $(v,\jmath^*\gamma)\in \jmath^*(L_M\times L_N)$, and therefore that the Dirac pullbacks $\jmath^*L_{M\circledast N}$ and $\jmath^*(L_M\times L_N)$ coincide.
\end{proof}

%
%
%
%
%
%
%
\subsection{Reduction of a presymplectic groupoid and universality}
\label{3.3}
Now suppose that $(X,L_X)$ is integrable, and let 
\[(\sss,\ttt):\G \tto X\]
be a source-connected presymplectic groupoid which integrates it. By \cite[Example 2.5]{bur.cra:09}, the map
\[(\sss,\ttt):\G\longrightarrow \overline{X}\times X\]
is a strong Dirac map. If $(S,\gamma)$ is a $Z$-level of $X$, $\HH$ is a souce-connected groupoid integrating $B_{S,\gamma}$, and the quotient $(\G\times_XS)/\HH$ is a manifold when the fiber product is taken along the target map, then from Corollary \ref{cormain} we obtain a Dirac structure on the reduced space 
\[\G_S\coloneqq (\G\times_XS)/\HH\]
such that the source map descends to a strong Dirac map
\[(\bar{\sss},\pbar):\G_S\longrightarrow \overline{X}\times Z.\]

Let $\Phi:M\longrightarrow X$ be a strong Dirac map, and consider the product Dirac structure
\[M\times\G_S\longrightarrow X\times \overline{X}\times Z.\]
Under the clean intersection hypothesis, the quotient $(M\times_X\G_S)/\G$ is a Dirac manifold with a strong Dirac map to $Z$. We will show that it is isomorphic to the Dirac reduction of $M$ at $(S,\gamma)$.

For every $x\in X$, let $e_x\in\G$ be the identity arrow at $x$. Then the natural b-Dirac map
\begin{align*}
M&\longrightarrow M\times_X\G \\	
m&\longmapsto (m, e_{\Phi(m)})
\end{align*}
restricts to a b-Dirac map
\begin{equation}
\label{restriction-2}
M\times_XS\longrightarrow M\times_X(\G\times_XS).
\end{equation}

\begin{proposition}
\label{univred}
The map \eqref{restriction-2} descends to a Dirac diffeomorphism
\[\overline{F}:(M\times_XS)/\HH\longrightarrow (M\times_X\G_S)/\G.\]
which commutes with the strong Dirac maps to $Z$. In particular, if $Z$ is a point then this map is an isomorphism of Poisson manifolds.
\end{proposition}
\begin{proof}
Suppose that $(m,e_{\Phi(m)})$ and $(m',e_{\Phi(m')})$ are conjugate under the action of $\G_\Delta\times \HH$. Then there exist $g\in \G$ and $h\in \HH$ such that 
\[\ttt(h)=\sss(g)=\Phi(m),\quad g\cdot m=m',\qquad\text{and}\qquad g\cdot e_{\Phi(m)}\cdot h= e_{\Phi(m')},\]
which implies that $g=h^{-1}$ and therefore that $g\in\HH$. It follows that $m$ and $m'$ are in the same $\HH$-orbit on $M\times_XS$, and therefore that $\overline{F}$ is injective.

Since $\G$ acts transitively on the fibers of the restriction
\[\sss:\G\times_XS\longrightarrow X,\]
every element of $M\times_X (\G\times_XS)$ is $\G_\Delta$-conjugate to an element of the form $(m,e_x)$. Moreover, if $(m,e_x)\in M\times_X (\G\times_XS)$, this implies that $\Phi(m)=x\in S$ and therefore $(m, e_x)$ is in the image of \eqref{restriction-2}. It follows that $\overline{F}$ is surjective. 

Since \eqref{restriction-2} is an injection on tangent spaces, it follows that $\overline{F}$ is bijective on tangent spaces, and therefore it is a diffeomorphism. It remains to show that it is a Dirac diffeomorphism. Consider the commutative diagram
\begin{equation*}
\begin{tikzcd}[column sep=large, row sep=large]
(M\times_XS)/\HH		\arrow[r, "F"]\arrow[rd, swap, "\overline{F}"]			&M\times_X\G_S \arrow[d]\\
						&\quad(M\times_X\G_S)/\G.
\end{tikzcd}
\end{equation*}
The vertical quotient map is b-Dirac by Theorem \ref{main}\ref{8b2exmfa}. The map $F$ descends from \eqref{restriction-2} via the diagram
\begin{equation*}
\begin{tikzcd}[column sep=large, row sep=large]
M\times_XS		\arrow[r]\arrow[d]		& M\times_X\G\times_XS\arrow[d]\\
(M\times_XS)/\HH			\arrow[r, "F"]				&M\times_X\G_S,
\end{tikzcd}
\end{equation*}
where the vertical arrows are quotients by the action of $\HH$. Since the vertical maps are f-Dirac and \eqref{restriction-2} is b-Dirac, it follows from \cite[Lemma 1.16]{bal:21} that $F$ is also b-Dirac. Then, since $\overline{F}$ is the composition of two b-Dirac maps, it is also b-Dirac and therefore a Dirac diffeomorphism.
\end{proof}

Because of Proposition \ref{univred}, we call $\G_S$ the \emph{universal reduced space} associated to the reduction level $(S,\gamma)$. It generalizes the symplectic universal reduced spaces of \cite[Section 4.3]{cro.may:21}, and specializes to a number of examples which already appear in the literature.

\begin{example}
\begin{enumerate}[leftmargin=20pt]
\item When $X=\g^*$ is the dual of a compact Lie algebra and $S=\t^*_+$ is the closure of the dominant Weyl chamber as in Example \ref{sympimp}, the universal reduced space is the universal imploded cross-section of \cite[Theorem 4.9]{gui.jef.sja:02}.

\item When $X=G$ is a compact Lie group and $S$ is the closed alcove $S=\overline{\mathcal{A}}$ of Example \ref{qhamimp}, Proposition \ref{ooysgjbr} implies that the universal reduced space is the quasi-Hamiltonian universal imploded cross-section of \cite[Theorem 4.6]{hur.jef.sja:06}.

\item When $X=G$ is a simply-connected semisimple complex Lie group and $S=\Sigma$ is the Steinberg cross-section from Example \ref{steinslice}, Proposition \ref{ooysgjbr} implies that the universal reduced space is the one-sided Steinberg slice of \cite[Proposition 2.17]{bal:21}.\\
\end{enumerate}
\end{example}

When $X=G$ is a Lie group equipped with the Cartan--Dirac structure, its presymplectic groupoid is the internal fusion double $D(G)$ of Example \ref{fusdouble}. If $(S,\gamma)$ is a $Z$-level of $G$, the corresponding universal reduced space is the $G$-space
\[D(G)_S=(G\times S)/\HH.\]
Since Proposition \ref{ooysgjbr} shows that Dirac reduction along the diagonal agrees with the usual quasi-Poisson reduction of a fusion product at the origin, we obtain the following quasi-Poisson version of Proposition \ref{univred}.

\begin{corollary}
Let $M$ be a Hamiltonian quasi-Poisson manifold with moment map $\Phi:M\longrightarrow G$. Then there is a Dirac diffeomorphism
\[(M\times_XS)/\HH\longrightarrow (M\circledast D(G)_S)\sslash_1G.\]
which commutes with the strong Dirac maps to $Z$. In particular, if $Z$ is a point then this map is an isomorphism of Poisson manifolds.\\
\end{corollary}

%
%
%
%
%
%
%
\section{Reductions of quasi-Poisson structures}
\label{qpsec}
We now show how Theorem \ref{main} applies in the setting of quasi-Poisson manifolds. In Section \ref{4.1} we outline a procedure for quasi-Poisson reduction relative to a certain class of subgroup, recovering one case of the quasi-Poisson reduction studied by Li-Bland and \v{S}evera \cite{lib.sev:15} and by \v{S}evera \cite{sev:15}. In Section \ref{4.2} we give a quasi-Poisson analogue to Whittaker reduction in which the levels are transversal slices that were introduced by Steinberg \cite{ste:65} and generalized by He and Lusztig \cite{lus.he:12} and by Sevostyanov \cite{sev:11}. In Section \ref{4.3} we use our reduction procedure to construct multiplicative versions of the open Moore--Tachikawa \cite{moo.tac:12} spaces introduced by Ginzburg and Kazhdan \cite{gin.kaz:21}, and we show that they satisfy the expected gluing laws.

\subsection{Quasi-Poisson reduction by a subgroup}
\label{4.1}
Let $G$ be a Lie group whose Lie algebra carries an invariant nondegenerate symmetric bilinear form, and equip it with the associated Cartan--Dirac structure.

\begin{definition}
A Lie subalgebra $\h$ of $\g$ is \emph{reducible} if its annihilator $\h^\perp$ is also a Lie subalgebra. We denote by $H$ and $H^\perp$ the connected subgroups of $G$ which integrate $\h$ and $\h^\perp$, and we call such subgroups \emph{reducible} as well.
\end{definition}

\begin{remark}
We will show that one can use Theorem \ref{main} to perform quasi-Poisson reduction relative to the action of a reducible subgroup of $G$. Applying this observation when $G$ is a complex reductive group produces Poisson and quasi-Poisson structures on a number of spaces that appear in geometric representation theory, including some that have been considered in work of Boalch \cite{boa:11}. All of the reductions we mention in this section can also be constructed, using the coisotropic subalgebra $(\h\times\h)+(\h^\perp)_\Delta$ of $\g\times\g$, as special cases of a more general reduction procedure for quasi-Poisson manifolds given by Li-Bland and \v{S}evera \cite[Theorem 3.1]{lib.sev:15}.
\end{remark}

The invariance of the bilinear form implies that $\h$ is reducible if and only if it satisfies the condition
\[[\h,\h^\perp]\s\h\cap\h^\perp.\]
In particular, any subgroup whose Lie algebra is Lagrangian is reducible---for example, the diagonal subgroup $G_\Delta\subset G\times \overline{G}$ is always a reducible subgroup. Moreover, $H$ is a reducible subgroup whenever $\h$ is coisotropic, as well as whenever $\h$ is isotropic and contains $[\h^\perp,\h^\perp]$.  When $G$ is complex and reductive, the following subgroups are examples of reducible subgroups:
\begin{itemize}[topsep=1.5pt]
\item parabolic subgroups
\item the unipotent radical of any parabolic subgroup
\item the derived subgroup of any parabolic subgroup
\end{itemize}

When $\h$ is reducible, the intersection $\h\cap\h^\perp$ is a normal Lie subalgebra of $\h$, and the pullback of the bilinear form of $\g$ descends to a nondegenerate form on the quotient
\[\bh\coloneqq \h/(\h\cap\h^\perp).\]
Therefore the Lie group
\[\bH\coloneqq H/(H\cap H^\perp)\]
can also be equipped with a Cartan--Dirac structure. We write $\varphi:H\too\bH$ for the quotient map.

\begin{lemma}
\label{plevels}
The subgroup $H$ is a $\bH$-level of $G$ whose stabilizer subalgebroid is the action Lie algebroid $H\rtimes\h^\perp$.
\end{lemma}
\begin{proof}
Consider the diagram
\begin{equation*}
\begin{tikzcd}[column sep=large, row sep=large]
H \arrow[hook]{r}{\imath} \arrow{d}{\varphi} & G \\
\overline{H}.
\end{tikzcd}
\end{equation*}
Since the Cartan $3$-form $\eta_G\in\Omega^3(G)$ vanishes along $\h\cap\h^\perp$, the pullback of $\eta_{\overline{H}}$ along $\varphi$ agrees with the pullback of $\eta_G$ along $\imath$.

We will now show that $\varphi_* \imath^*L_G = L_{\overline{H}}$. An element of $\varphi_*\imath^*L_G$ at a point $\varphi(h)$ is of the form $(\varphi_*v, \alpha)$, where $v = \xi^L-\xi^R$ and $\varphi^*\alpha=\sigma(\xi)$ for some $\xi \in \g$ satisfying
\[\xi-\Ad_{h}\xi \in \h \quad \text{ and } \quad \xi+\Ad_{h}\xi \in (\h \cap \h^\perp)^\perp = \h + \h^\perp.\]
In particular, this implies that $\xi \in \h + \h^\perp$. We write
\[\xi = \xi_{\h} + \xi_{\h^\perp}\]
with respect to this decomposition, and we let $\overline{\xi}_\h$ be the image of $\xi_\h$ under the quotient map to $\bh.$ By reducibility, $\xi_{\h^\perp}-\Ad_{h} \xi_{\h^\perp} \in \h \cap \h^\perp$ and so
\[\varphi_*(\xi^L-\xi^R)=\overline{\xi}_\h^L-\overline{\xi}_\h^R.\]
Similarly, $\xi_{\h^\perp}+\Ad_{h}\xi_{\h^\perp} \in \h^\perp$ and therefore
\[\alpha =\sigma(\overline{\xi}_\h).\]
It follows that $(\varphi_*v, \alpha) \in L_{\overline{H}}$, and we obtain the desired equality of Dirac structures. 

At any point $h\in H$,
\begin{align*}
B_{H}&=L_G\cap (T(H\cap H^\perp)\oplus TH^\circ)\\
		&=\left\{(\xi^L-\xi^R,\sigma(\xi))\mid \xi-\Ad_h\xi\in\h\cap\h^\perp, \xi+\Ad_h\xi\in\h^\perp\right\}\\
		&=\left\{(\xi^L-\xi^R,\sigma(\xi))\mid \xi\in\h^\perp\right\},
		\end{align*}
where the last equality follows from the reducibility condition. Therefore the rank of this distribution is constant, and under the identification of Example \ref{cd} it coincides with the action Lie algebroid
\[B_{H}\cong H\rtimes\h^\perp.\]
This implies that $H$ is a $\bH$-level of $G$.
\end{proof}

Applying Corollary \ref{cormain} in this setting, we obtain the following proposition. 

\begin{proposition}
\label{qpred}
Let $(\mu,\nu):M\too K\times G$ be a quasi-Poisson $K\times G$-manifold, and suppose that $\h$ is a reducible Lie subalgebra of $\g$ whose integrating subgroup $H\subset G$ intersects $\nu$ cleanly. If the quotient
\[Q=\nu^{-1}(H)/H^\perp\]
is a manifold, it has a natural quasi-Poisson $K\times\bH$-structure and the maps $\varphi$ and $\mu$ descend to a quasi-Poisson moment map
\[(\mubar,\overline{\varphi}):Q\too K\times\bH.\]
\end{proposition}

Suppose now that $G$ is a complex reductive group, let $P\subset G$ be a parabolic subgroup, and let $U\subset P$ be its unipotent radical. Then $P^\perp=U$, so $P$ and $U$ are both reducible and we may apply Proposition \ref{qpred}.

\begin{example}
The subspace $U$ of $G$ is a reduction level. If $(\mu,\nu):M\too K\times G$ is a quasi-Poisson $K\times G$-manifold, the reduction
\[\nu^{-1}(U)/P\too K\]
is a quasi-Poisson $K$-manifold. In particular, reducing the internal fusion double $D(G)$ of Example \ref{fusdouble} gives a quasi-Hamiltonian $G$-structure on $G\times_UP$. The corresponding moment map is
\begin{align*}
G\times_PU &\too G\\
[g:u]&\longmapsto gug^{-1},
\end{align*}
and this space is a multiplicative counterpart of the cotangent bundle of the partial flag variety $G/P$ described in Example \ref{unired}.
\end{example}

\begin{example}
\label{parabolicred}
Let $L$ be the Levi quotient of $P$ and let
\[\varphi:P\too L\]
be the quotient map. Then $P$ is an $L$-level of $G$ and, for any quasi-Poisson $K\times G$-manifold $(\mu,\nu):M\too K\times G$, the reduction
\[\nu^{-1}(P)/U\too K\times L\]
is a quasi-Poisson $K\times L$-manifold. In particular, applying this to the internal fusion double $D(G)$ we obtain the space $G\times_UP$, which has a quasi-Hamiltonian $G\times L$-structure with moment map
\begin{align*}
G\times_UP&\too G\times L\\
	[g:p]&\longmapsto (gpg^{-1}, pU).
	\end{align*}
This quasi-Hamiltonian manifold, which was introduced in \cite[Theorem 9]{boa:11}, is a multiplicative analogue of the cotangent bundle of the quasiaffine variety $G/U$ described in Example \ref{parared}.
\end{example}

Taking a quotient by the action of $\bH$ and applying Proposition \ref{quotients} gives a further corollary to Proposition \ref{qpred}.

\begin{corollary}
\label{qpred2}
Let $(\mu,\nu):M\too K\times G$ be a quasi-Poisson $K\times G$-manifold, and let $\h$ be a reducible Lie subalgebra of $\g$ whose integrating subgroup $H\subset G$ intersects $\nu$ cleanly. Suppose that the quotient 
\[Q=\nu^{-1}(H)/HH^\perp\]
is a manifold. 
\begin{enumerate}[label=\textup{(\arabic*)}]
\item The quotient $Q$ has a natural quasi-Poisson $K$-structure and the map $\mu$ descends to a quasi-Poisson moment map
\[\mubar:Q\too K.\]
\item For every conjugacy class $\O\subset\bH$, let $\O_{H\cap H^\perp}$ denote its preimage in $H$. Then 
\[\nu^{-1}(\O_{H\cap H^\perp})/HH^\perp\]
is a quasi-Poisson submanifold of $Q$.
\item If $M$ is quasi-Hamiltonian, then
\[Q=\bigsqcup\nu^{-1}(\O_{H\cap H^\perp})/HH^\perp\]
is the foliation of $Q$ by quasi-Hamiltonian leaves.
\end{enumerate}
\end{corollary}

\begin{example}
Let $G$ be a complex reductive group and $P$ be a parabolic subgroup with unipotent radical $U$ as above. Applying Corollary \ref{qpred2} to Example \ref{parabolicred} gives the bundle $G\times_PP$ the structure of a Hamiltonian quasi-Poisson $G$-space with moment map 
\begin{align*}
\Phi:G\times_PP&\longrightarrow G\\
	[g:p]&\longmapsto gpg^{-1}.
	\end{align*}
The partition of this space into quasi-Hamiltonian leaves is given by
\[G\times_PP=\bigsqcup G\times_P\O U,\]
where the right-hand side is a union over the conjugacy classes $\O$ of $L$. This recovers the quasi-Hamiltonian $G$-structures on $G\times_P\O U$ constructed in \cite[Section 4]{boa:11}.

In particular, when $P=B$ is a Borel subgroup, $U$ is the maximal unipotent subgroup of $G$ and $L=T$ is a torus. We obtain a quasi-Poisson structure on the multiplicative Grothendieck--Springer resolution
\[G\times_BB=\bigsqcup G\times_BtU\]
with quasi-Hamiltonian leaves given by twisted unipotent bundles indexed by elements $t\in T$.
\end{example}

If $\h$ is reducible, then the sum $\h+\h^\perp$ is also a reducible subalgebra of $\g$. If we make the additional assumption that $[\h^\perp,\h^\perp]\subset\h$, then the subgroup $HH^\perp$ together with the map
\[\psi:HH^\perp\too HH^\perp/H^\perp\cong \bH\]
gives another type of reduction level of $G$, which coincides with that of Lemma \ref{plevels} when $\h^\perp$ is contained in $\h$. Parabolic subalgebras of a complex reductive Lie algebra, their commutator subalgebras, and maximal nilpotent subalgebras are all examples of reducible subalgebras satisfying this additional condition.

\begin{lemma}
Suppose that $\h$ is reducible and that 
\begin{equation}
\label{perps}
[\h^\perp,\h^\perp]\subset\h.
\end{equation} 
Then the pair $HH^\perp$ is an $\bH$-level of $G$ whose stabilizer subalgebroid is the action Lie algebroid $HH^\perp\rtimes(\h\cap\h^\perp)$.
\end{lemma}
\begin{proof}
Factoring the map $\psi$ gives the diagram
\begin{equation*}
\begin{tikzcd}[]
&HH^\perp \arrow[ld,swap, "\varphi"]\arrow[hook]{rr}{\imath} \arrow{dd}{\psi} && G \\
HH^\perp/(H\cap H^\perp)\arrow[rd, swap, "q"]&&&\\
&\overline{H},&&
\end{tikzcd}
\end{equation*}
where $\varphi$ is the map obtained from viewing $HH^\perp$ as a reducible subgroup as in Lemma \ref{plevels}. Both quotients of $HH^\perp$ inherit invariant nondegenerate symmetric bilinear forms from $G$, so that we can equip them with Cartan--Dirac structures for which the map $q$ is f-Dirac. We therefore have
\[\psi_*\imath^* L_G=q_*\varphi_*\imath^* L_G=L_{\bH},\]
where the second equality follows from Lemma \ref{plevels}. Moreover, since $(\h^\perp,[\h^\perp,\h^\perp])=0$, the Cartan $3$-form vanishes along $H^\perp$ and therefore the pullback of $\eta_{\overline{H}}$ along $\varphi$ agrees with the pullback of $\eta_G$ along $\imath$.

At any point $g\in HH^\perp$,
\begin{align*}
B_{HH^\perp,\psi}&=L_G\cap (TH^\perp\oplus T(HH^\perp)^\circ)\\
		&=\left\{(\xi^L-\xi^R,\sigma(\xi))\mid \xi-\Ad_g\xi\in\h^\perp, \xi+\Ad_g\xi\in\h\cap\h^\perp\right\}\\
		&=\left\{(\xi^L-\xi^R,\sigma(\xi))\mid \xi\in\h\cap\h^\perp\right\},
		\end{align*}
where the last equation follows from assumption \eqref{perps}. Therefore 
\[B_{HH^\perp,\psi}\cong HH^\perp\rtimes (\h\cap\h^\perp)\]
has constant rank, and so we conclude that $HH^\perp$ is an $\bH$-level of $G$.
\end{proof}

Applying Corollary \ref{cormain}, we obtain the following reduction.

\begin{proposition}
\label{qpred3}
Let $(\mu,\nu):M\too K\times G$ be a quasi-Poisson $K\times G$-manifold, and suppose that $\h$ is a reducible Lie subalgebra of $\g$ which satisfies \eqref{perps} and such that $HH^\perp\subset G$ intersects $\nu$ cleanly. If the quotient
\[Q=\nu^{-1}(HH^\perp)/H\cap H^\perp\]
is a manifold, it has a natural quasi-Poisson $K\times\bH$-structure and the maps $\psi$ and $\mu$ descend to a quasi-Poisson moment map
\[(\mubar,\overline{\psi}):Q\too K\times\bH.\]
\end{proposition}

We may once again use Proposition \ref{quotients} to quotient further by the action of $\bH$.
 
\begin{proposition}
\label{qpred4}
Let $(\mu,\nu):M\too K\times G$ be a quasi-Poisson $K\times G$-manifold, and let $\h$ be a reducible Lie subalgebra of $\g$ which satisfies \eqref{perps} and such that $HH^\perp\subset G$ intersects $\nu$ cleanly. Suppose that the quotient 
\[Q=\mu^{-1}(HH^\perp)/H\]
is a manifold. 
\begin{enumerate}[label=\textup{(\arabic*)}]
\item The quotient $Q$ has a natural quasi-Poisson $K$-structure and the map $\mu$ descends to a quasi-Poisson moment map
\[\mubar:Q\too K.\]
\item For every conjugacy class $\O\subset\bH$, let $\O H^\perp$ denote its preimage in $HH^\perp$. Then 
\[\nu^{-1}(\O H^\perp)/H\]
is a quasi-Poisson submanifold of $\nu^{-1}(HH^\perp)/H$.
\item If $M$ is quasi-Hamiltonian, then
\[Q=\bigsqcup\nu^{-1}(\O H^\perp)/H\]
is the foliation of $Q$ by quasi-Hamiltonian leaves.
\end{enumerate}
\end{proposition}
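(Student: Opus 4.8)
The plan is to obtain this statement as a further quotient of the reduction produced by Proposition \ref{qpred3}, in exact parallel with the way Proposition \ref{quotients} refines Theorem \ref{main} and with the identifications used in Example \ref{parags}.

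First I would invoke Proposition \ref{qpred3}: since $\h$ is reducible and satisfies \eqref{perps} and $HH^\perp$ intersects $\nu$ cleanly, the quotient $Q\coloneqq\nu^{-1}(HH^\perp)/(H\cap H^\perp)$ is a quasi-Poisson $K\times\bH$-manifold with moment map $(\mubar,\phibar):Q\too K\times\bH$, where $\phibar$ is induced by $\psi:HH^\perp\too\bH$. The manifold of the statement is then the residual quotient $Q/\bH=\nu^{-1}(HH^\perp)/H$, which is a manifold by hypothesis. Next I would reduce $Q$ along the $\bH$-factor. Equipping $\bH$ with its Cartan--Dirac structure, Example \ref{cd} identifies its nondegenerate leaves with the conjugacy classes $\O\subset\bH$, and Example \ref{orbit} shows each $(\O,\omega)$ is a reduction level with stabilizer subalgebroid $A_{\O,\omega}=L_{\bH}|_\O\cong\O\rtimes\bh$, the action Lie algebroid of the residual $\bh$-action. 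Applying Corollary \ref{cormain} to $(\phibar,\mubar):Q\too\bH\times K$ and the reduction level $(\O,\omega)$ shows that whenever the orbit space is a manifold, $\phibar^{-1}(\O)/A_{\O,\omega}$ carries a quasi-Poisson $K$-structure with moment map descended from $\mubar$; the orbits of $A_{\O,\omega}$ on $\phibar^{-1}(\O)$ are the $\bH$-orbits, and unwinding the definitions $\phibar^{-1}(\O)=\nu^{-1}(\O H^\perp)/(H\cap H^\perp)$, so this reduced space is exactly $M\sll{\O}H=\nu^{-1}(\O H^\perp)/H$. This yields part (2). For the global statements (1) and (3) I would use the evident analogue of Proposition \ref{quotients} for Dirac realizations into a product $\bH\times K$, obtained by rerunning the fiber-product argument of that proposition along the diagonal reduction level $\bH_\Delta\subset\bH\times\overline{\bH}$ but with Corollary \ref{cormain} in place of Theorem \ref{main}, so that the spectator factor $K$ is retained. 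This gives a quasi-Poisson $K$-structure on $Q/\bH=\nu^{-1}(HH^\perp)/H$ with moment map $\mubar$, and — when $M$, hence $Q$ by Theorem \ref{main}\ref{4s6edzna}, is nondegenerate — identifies its quasi-Hamiltonian leaves with the reduced spaces $\phibar^{-1}(\O)/\bH=M\sll{\O}H$, which is (3), while the leaf analysis simultaneously matches the quasi-Poisson $K$-structure of (2) with the one $M\sll{\O}H$ inherits as a submanifold of $Q/\bH$.

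The routine parts are the algebraic identifications of the iterated quotients and of the stabilizer subalgebroids, which mirror the reducibility computations in the proofs of Propositions \ref{plevels} and \ref{qpred3}. The main obstacle is the spectator version of Proposition \ref{quotients}: one must check that reducing $(\phibar,\mubar):Q\too\bH\times K$ only along the $\bH$-factor genuinely descends $(\phibar,\mubar)$ to a Dirac realization of $K$ and that its nondegenerate leaves are precisely the pieces $M\sll{\O}H$ — this is exactly where Corollary \ref{cormain} must do the work that Theorem \ref{main} did in Proposition \ref{quotients}. A secondary point is verifying that each $\phibar^{-1}(\O)$, equivalently each $\nu^{-1}(\O H^\perp)$, is a submanifold and that $M\sll{\O}H$ embeds in $Q/\bH$; this follows from clean intersection of $\nu$ with $HH^\perp$ together with the fact that $\O$ is a nondegenerate leaf of $\bH$, and is automatic in the quasi-Hamiltonian case of (3), where the $M\sll{\O}H$ are simply the leaves of the quasi-Hamiltonian foliation.
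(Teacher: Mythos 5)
Your proposal matches the paper's approach: the paper justifies this proposition only by the remark that one ``may once again quotient further by the action of $\bH$'' after Proposition \ref{qpred3}, which is exactly the two-step reduction you carry out, with the $K$-retaining variant of Proposition \ref{quotients} supplied by the fiber-product argument and Corollary \ref{cormain}. Your elaboration is correct (and considerably more detailed than the paper's), the only caveat being that, like the paper, it implicitly assumes the intermediate quotient $\nu^{-1}(HH^\perp)/(H\cap H^\perp)$ of Proposition \ref{qpred3} is itself a manifold, which is not among the stated hypotheses.
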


\begin{example}
Once again let $G$ be a complex reductive group and let $P$ be a parabolic subgroup with unipotent radical $U$. Consider the reducible subgroup $H=[P,P]$. In this case we have
\[H^\perp=Z_LU, \quad HH^\perp=P, \quad H\cap H^\perp=U,\quad\text{and}\quad \bH=[L,L],\]
where $Z_L$ is the center of the Levi component of $P$. We may then reduce the fusion product $D(G)\too G\times G$ as in Proposition \ref{qpred4}. This gives a quasi-Poisson $G$-structure along the moment map
\begin{align*}
G\times_{[P,P]}P&\too G\\
		[g:p]&\longmapsto gpg^{-1}.
		\end{align*}
The quasi-Hamiltonian leaves of this manifold are indexed by the conjugacy classes of $[L,L]$ and the corresponding foliation is given by
\[G\times_{[P,P]}P=\bigsqcup G\times_{[P,P]}\O Z_LU.\]
\end{example}

%
%
%
%
%
%
%
\subsection{Quasi-Poisson Whittaker reduction}
\label{4.2}
Now let $G$ be a simply-connected, semisimple complex group equipped with the Cartan--Dirac structure and write $\g$ for its Lie algebra. Let $T$ be a fixed choice of maximal torus with Lie algebra $\t$, and let $W$ be the Weyl group of $G$ associated to $T$. 

Following Sevostyanov \cite{sev:11}, fix a Weyl group element $w\in W$ and let $\p$ be the associated parabolic subalgebra of $\g$. We write $\uu$ for its nilradical, $\l$ for its Levi component, and $P$, $U$, and $L$ for the corresponding subgroups of $G$. We let $U^-$ be the nilradical of the opposite parabolic subgroup $P^-$. 

Let $w\in N_G(T)$ be a normal representative of the Weyl group element $w$, which we denote by the same symbol, and let
\[Z=\{g\in L\mid wgw^{-1}=g\}\]
be the fixed-point subgroup of $w$ under the conjugation action. We write $\z$ for its Lie algebra. Then $[L,L]$ is contained in $Z$, and the intersection of $\z$ with $\t$ is precisely the fixed-point set $\t^w$. Moreover, let
\[U_w=\{u\in U\mid wuw^{-1}\in U^-\}=U\cap w^{-1}U^-w\]
be the subgroup of $U$ generated by the root spaces whose sign is flipped by $w$. This subgroup has dimension equal to the length of $w$, and it is normalized by $Z$. The \emph{Sevostyanov slice} through $w^{-1}$ is the product
\[\Sigma\coloneqq U_wZw^{-1}.\]
This slice is transverse to each conjugacy class that it meets \cite[Proposition 2.3]{sev:11}, and the conjugation action \cite[Proposition 2.1]{sev:11} gives an isomorphism
\begin{equation}
\label{target}
U\times\Sigma\xlongrightarrow{\sim} UZw^{-1}U\eqqcolon \Theta.
\end{equation}
In particular, when $w$ is a Coxeter element, we recover the Steinberg slice of Example \ref{steinslice}.

Let $\c$ be the annihilator of $\t^w$ in $\t$. Then the direct sum decomposition
\begin{equation}
\label{ortho}
\g=(\uu+\z+\uu^-)\oplus\c,
\end{equation}
is orthogonal with respect to the Killing form \cite[Equation 2.35]{sev:11}. This implies that the restriction of the Killing form to $\z$ is nondegenerate, and we can equip $Z$ with the associated Cartan--Dirac structure. We will now show that the projection map
\begin{align*}
\varphi:\qquad\Sigma\quad\,&\longrightarrow\,\, Z\\
			uzw^{-1}&\longmapsto\,\, z
			\end{align*}
makes the Sevostyanov slice $\Sigma$ a $Z$-level of $G$.

\begin{proposition}
\label{sigma}
The slice $\Sigma$ is a $Z$-level of $G$ with trivial stabilizer subalgebroid.
\end{proposition}
\begin{proof}
Consider the diagram
\begin{equation*}
\begin{tikzcd}[column sep=large, row sep=large]
\Sigma \arrow[hook]{r}{\imath} \arrow{d}{\varphi} & G \\
Z.
\end{tikzcd}
\end{equation*}
First we show that the pullback of the $3$-form $\eta_G$ through $\imath$ agrees with the pullback of $\eta_Z$ through $\varphi$. At any point $g\in\Sigma$, let $V_1,V_2,V_3\in T_g\Sigma$, and write 
\[V_i = n_i^R + z_i^R\quad\text{ for }n_i \in \uu_w\,\text{ and }\,z_i\in\z.\]
Then, since $\uu_w$ is isotropic and $\uu_w$ and $\z$ are orthogonal under the Killing form,
\[\imath^*\eta_G(V_1,V_2,V_3)=(n_1+z_1, [n_2+z_2,n_3+z_3]) 
					=(z_1, [z_2,z_3])
					=\varphi^*\eta_{Z}(V_1,V_2,V_3).\]

Next we show that $\varphi_*\imath^*L_G=L_Z$, by showing that the first bundle is contained in the second. An element of $\varphi_*\imath^*L_G$ at the point $\varphi(g)$ is of the form $(\varphi_*v,\alpha)$, where $v=\xi^L-\xi^R$ and $\varphi^*\alpha=\sigma(\xi)$ for an element $\xi\in\g$ that satisfies
\begin{equation}
\label{conditions}
\Ad_{g}\xi-\xi\in\uu_w+\z\quad\text{and}\quad\Ad_{g}\xi+\xi\in\uu_w^\perp.
\end{equation}
In view of \eqref{ortho}, it follows that 
\[\xi\in\uu_w^\perp=\uu+\z+\c+(\uu^-\cap\Ad_{w^{-1}}\uu^-),\]
and we write $\xi=u+z+c+\bar{u}$ with respect to this decomposition.

The first condition of \eqref{conditions} implies that $c=0$, while the second implies that
\[\Ad_g(u)\in\uu+\z+(\uu^-\cap\Ad_{w^{-1}}\uu^-).\]
Decomposing into the direct sum
\[\uu=(\uu\cap \Ad_{w}\uu)+(\uu\cap \Ad_{w}\uu^-),\]
we see that \eqref{conditions} is satisfied only if $u\in \uu\cap \Ad_{w}\uu$. Therefore
\[\Ad_g\xi-\xi=(\Ad_gu-u)+(\Ad_gz-z)+(\Ad_g\bar{u}-\bar{u}).\]
where the first term is contained in $\uu$ and the third is contained in $\uu+\uu^-$. This implies that
\[\varphi_*v=z^L-z^R.\]
Similarly, 
\[\Ad_g\xi+\xi=(\Ad_gu+u)+(\Ad_gz+z)+(\Ad_g\bar{u}+\bar{u})\]
and we obtain
\[\alpha=\sigma(z).\]
It follows that $(\varphi_*v,\alpha)\in\ L_Z$, and therefore the two Dirac structures coincide.

At any point $g\in\Sigma$ the distribution $B_{\Sigma,\varphi}$ consists of points of the form $(\xi^L-\xi^R,\sigma(\xi))$ with the property that
\begin{equation}
\label{conditions2}
\Ad_{g}\xi-\xi\in\uu_w\quad\text{and}\quad\Ad_{g}\xi+\xi\in(\uu_w+\z)^\perp.
\end{equation}
Once again by decomposition \eqref{ortho},
\[\xi\in(\uu_w+\z)^\perp=\uu+\c+(\uu^-\cap \Ad_{w^{-1}}\uu)\]
and we write $\xi=u+c+\bar{u}$. The first condition of \eqref{conditions2} implies that $c=0$, while the second again implies that $u\in\uu\cap\Ad_w\uu.$ Therefore 
\[\Ad_g\bar{u}\in\uu^-\cap \Ad_{w^{-1}}\uu,\]
and by the second condition of \eqref{conditions2} this means that $\Ad_g\bar{u}-\bar{u}=0$ and
\[\Ad_g\xi-\xi=\Ad_gu-u\in\uu_w.\]
The transversality given by isomorphism \eqref{target} then implies that $\Ad_g\xi-\xi=0$. Therefore 
\[B_{\Sigma,\varphi}=L_G\cap (T\Sigma\oplus T\Sigma^\circ)\cap(\ker\varphi_*\oplus T^*G)=0\]
is the zero vector bundle.
\end{proof}

The target $\Theta$ of isomorphism \eqref{target} can be written as a direct product
\[\Theta=U_wZw^{-1}U,\]
and therefore has a natural projection $\psi:\Theta\longrightarrow Z$ onto the reductive group $Z$.
Moreover, since at every point $g\in\Theta$ we can express the tangent space as
\[T_g\Theta = \{x^R + z^R+y^L \mid x, y \in \uu, z\in \z\},\]
we can define a $2$-form $\gamma\in\Omega^2(\Theta)$ by
\[\gamma_g(x_1^R + z_1^R+y_1^L, x_2^R + z_2^R+y_2^L) = \tfrac{1}{2}( x_2+ z_2, \Ad_gy_1)-\tfrac{1}{2}(x_1+ z_1, \Ad_gy_2).\]
Since the Killing form is $G$-invariant, $\uu$ is isotropic, and $\z$ annihilates $\uu$, it follows that $\gamma$ is well-defined.

\begin{proposition}
The pair $(\Theta,\gamma)$ is a $Z$-level of $G$ with stabilizer subalgebroid $\Theta\rtimes\uu$.
\end{proposition}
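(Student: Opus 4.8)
The plan is to compute $A_{\Theta,\gamma}$ explicitly via the identification $L_G\cong G\rtimes\g$ of the Cartan--Dirac structure from Example~\ref{cd}, under which a point of $L_G$ over $g$ is $(\xi^L-\xi^R,\sigma(\xi))$ for a unique $\xi\in\g$. Trivialising $T_gG\cong\g$ by right translation --- so that $\xi^L-\xi^R$ corresponds to $(\Ad_g-1)\xi$ and the subspace $T_g\Theta=\{x^R+y^L\mid x,y\in\uu\}$ corresponds to $\uu+\Ad_g\uu$ --- the fiber of $A_{\Theta,\gamma}$ at $g$ is identified with the set of $\xi\in\g$ satisfying
\[
(\Ad_g-1)\xi\in\uu+\Ad_g\uu\qquad\text{and}\qquad\imath^*\sigma(\xi)=\gamma^\flat(\xi^L-\xi^R),
\]
and I claim this set is exactly $\uu$. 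Granting the claim, the map $(g,\xi)\mapsto(\xi^L-\xi^R,\sigma(\xi))$ identifies $\Theta\rtimes\uu$ with $A_{\Theta,\gamma}$; it is fiberwise injective since $(\xi^L-\xi^R,\sigma(\xi))=0$ forces $\xi=0$. This at once shows $A_{\Theta,\gamma}$ has constant rank and pins down the stabilizer subalgebroid.

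The easy inclusion $\uu\subseteq A_{\Theta,\gamma,g}$ goes as follows: for $\xi\in\uu$ the tangency $(\Ad_g-1)\xi=\Ad_g\xi-\xi\in\Ad_g\uu+\uu$ is immediate, and $\imath^*\sigma(\xi)=\gamma^\flat(\xi^L-\xi^R)$ is verified by evaluating both sides on $x^R+y^L$ with $x,y\in\uu$, using the definition of $\gamma$, invariance of the form, and isotropy of $\uu$ --- this is precisely the computation $\gamma$ is built to make work.

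The reverse inclusion is the main obstacle, and the crux is the vanishing $\g_g\cap\b=0$ for every $g\in\Theta$. To see it, write $\zeta\in\g_g\cap\b$ as $\zeta=\zeta_0+\zeta_+$ with $\zeta_0\in\t$, $\zeta_+\in\uu$; then $\Ad_{g^{-1}}\zeta=\zeta$ gives
\[
(\Ad_{g^{-1}}-1)\zeta_0=\zeta_+-\Ad_{g^{-1}}\zeta_+\in(\Ad_{g^{-1}}-1)\t\cap(\Ad_{g^{-1}}\uu+\uu)=0
\]
by Lemma~\ref{vsf93e3e}, so $\zeta_+\in\uu\cap\g_g$, which is zero because the $U$-conjugation action on $\Theta$ is free by the slice isomorphism~\eqref{cross}, and $\zeta=\zeta_0\in\t\cap\g_g$, which is zero because otherwise $g$ would centralise the nontrivial subtorus $\overline{\exp(\C\zeta_0)}$ and hence lie in a proper Levi subgroup of $G$, impossible since $\Theta\s BcB$ and a Coxeter element lies in no proper parabolic subgroup of $W$ --- equivalently has no nonzero fixed vector on $\t$. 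With $\g_g\cap\b=0$ in hand, a dimension count using \eqref{cross} shows that the tangency condition alone cuts out $\uu\oplus\g_g$: the image of $\Ad_g-1$ is $\g_g^\perp$, and $\g_g^\perp+(\uu+\Ad_g\uu)=\g$ exactly because its orthogonal $\g_g\cap\b\cap\Ad_g\b=\g_g\cap\b$ vanishes, so the solution space has dimension $\dim\uu+\dim\g_g=\dim(\uu\oplus\g_g)$. Finally, for $\xi=\xi_\uu+\xi'$ in this space with $\xi_\uu\in\uu$, $\xi'\in\g_g$, we have $\xi^L-\xi^R=\xi_\uu^L-\xi_\uu^R$, so the second condition reduces --- via the easy inclusion applied to $\xi_\uu$ --- to $\imath^*\sigma(\xi')=0$; but $\xi'$ being $\Ad_g$-fixed, $\sigma(\xi')$ is represented under right trivialisation by $\xi'$ itself, so this says $\xi'\perp\uu+\Ad_g\uu$, i.e. $\xi'\in\g_g\cap\b=0$. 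Hence $\xi=\xi_\uu\in\uu$.

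It remains to check the second condition of Definition~\ref{levels}, that $p_T(A_{\Theta,\gamma})=\{\xi^L-\xi^R\mid\xi\in\uu\}$ lies in $\ker(\imath^*\eta+d\gamma)$; at each $g$ this is the tangent space of the free $U$-orbit through $g$, equivalently of the slice $\Theta\cap\O$ of the ambient conjugacy class $\O$. I would contract $\imath^*\eta+d\gamma$ with $\xi^L-\xi^R$, $\xi\in\uu$, and check it vanishes directly, using $d\eta=0$, bi-invariance of $\eta$, and isotropy of $\uu$; I expect $\gamma$ to restrict on each $\Theta\cap\O$ to the quasi-Hamiltonian $2$-form of the conjugacy class, so that in fact $\imath^*\eta+d\gamma=0$ on all of $\Theta$, which gives the condition at once. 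With both conditions verified, Proposition~\ref{subalg} makes $A_{\Theta,\gamma}$ a Lie algebroid, and $(\Theta,\gamma)$ is a reduction level with stabilizer subalgebroid $\Theta\rtimes\uu$.
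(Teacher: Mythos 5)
Your identification of the stabilizer subalgebroid is correct, but it takes a genuinely different route from the paper. The paper attacks the covector condition first: writing out $\gamma_g(\xi^R-\xi^L,\cdot)=\imath^*\sigma(\xi)$ and using isotropy of $\uu$, it collapses to $(\xi,x+y)=0$ for all $x,y\in\uu$, i.e.\ $\xi\in\uu^\perp=\b$, and then Lemma \ref{vsf93e3e} together with the tangency condition forces $\xi\in\uu$. You instead isolate the statement $\g_g\cap\b=0$ for $g\in\Theta$, use it to run a dimension count showing that the tangency condition alone cuts out $\uu\oplus\g_g$, and only then invoke the covector condition to kill the $\g_g$-component. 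Both arguments ultimately rest on Lemma \ref{vsf93e3e} (note that to get $\t\cap\g_g=0$ you need the content of that lemma's \emph{proof} --- that the $\t$-component itself vanishes, via the absence of nonzero Coxeter-fixed vectors --- not just the stated vanishing of the intersection; your supplementary Bruhat/Levi argument supplies this, if somewhat circuitously). Your route is longer but buys the intrinsic fact $\g_g\cap\b=0$ and a transparent linear-algebra picture of the tangency condition; the paper's is shorter because the form condition does most of the work immediately.

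There is, however, a genuine gap in your verification of the second condition of Definition \ref{levels}, namely $p_T(A_{\Theta,\gamma})\s\ker(\imath^*\eta+d\gamma)$. You state that you \emph{would} contract $\imath^*\eta+d\gamma$ with $\xi^L-\xi^R$ and check vanishing, and that you \emph{expect} $\imath^*\eta+d\gamma=0$ on $\Theta$; neither computation is carried out. This is precisely the half of the proposition that requires real work: the paper proves the identity $d\gamma=-\imath^*\eta$ on all of $T\Theta$ by an explicit evaluation of $d\gamma(V_1,V_2,V_3)$ against $\eta(V_1,V_2,V_3)$ for $V_i=x_i^R+y_i^L$, using invariance of the form and isotropy of $\uu$, and it is this identity (not anything weaker) that makes $\gamma$ the correct choice of $2$-form. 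Moreover, your proposed fallback --- that $\gamma$ restricts on each $\Theta\cap\O$ to the quasi-Hamiltonian $2$-form of the conjugacy class --- would, even if verified, only control $(\imath^*\eta+d\gamma)$ evaluated on \emph{three} vectors tangent to the orbit directions $T(\Theta\cap\O)$, whereas the definition requires $\imath_v(\imath^*\eta+d\gamma)=0$ as a $2$-form on all of $T\Theta$ for $v$ in the orbit directions; since $\dim\Theta=\dim\uu+\operatorname{rk}\g>\dim(\Theta\cap\O)$, that restriction statement is strictly weaker than what is needed. So the contraction computation (or the full identity $\imath^*\eta+d\gamma=0$) must actually be performed to complete the proof.
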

\begin{proof}
Let $p:\Theta\too\Sigma$ be the projection map induced by \eqref{target}. Then the inclusion of $\Sigma$ into $\Theta$ is a section of $p$ and, since $\psi$ is invariant along conjugacy classes of $U$, $\psi=\varphi\circ p$. We obtain a diagram
\begin{equation*}
\begin{tikzcd}[row sep=large, column sep=large]
\Sigma\arrow[r, hook,swap, "\imath"]\arrow[rd, swap, "\varphi"]	&\Theta\arrow[l, bend right=30, swap, "p"]\arrow[r, hook, "\jmath"]\arrow[d, "\psi"]	&G\\
&Z.	&
\end{tikzcd}
\end{equation*}
First we show that the differential of $\gamma$ satisfies the first condition of Definition \ref{genlevels}. Fix a point $g\in\Theta$, let $V_1,V_2,V_3\in T_g\Theta$, and write $V_i = x_i^R + z_i^R+y_i^L$ for $x_i,y_i \in \uu$,  and $z_i\in\z$. Then
\begin{align*}
2d\gamma(V_1, V_2, V_3) &= 2V_1 \gamma(V_2, V_3) + 2\gamma(V_1, [V_2, V_3]) + c.p. \\
					&= ([x_1, x_2], \Ad_gy_3) + (\Ad_g[y_1, y_2], x_3) + c.p.
					\end{align*}
and, since $\z$ annihilates $\uu$,
\begin{align*}
2\jmath^*\eta(V_1, V_2, V_3) &= (\Ad_{g^{-1}}x_1 + y_1, [\Ad_{g^{-1}}x_2 + y_2, \Ad_{g^{-1}}x_3 +  y_3])+2(z_1,[z_2,z_3])\\
					&= -2d\gamma(V_1, V_2, V_3)+2\psi^*\eta_Z(V_1,V_2,V_3).
					\end{align*}

Now we show that $p_*\tau_{-\gamma}\jmath^*L_G=\imath^*\jmath^*L_G.$ Suppose that $(v,\alpha)$ is an element of the right-hand side. Then there exists a Lie algebra element $\xi\in\g$ such that $\xi^L-\xi^R$ is tangent to $\Theta$ and
\[p_*(\xi^L-\xi^R)=v\quad\text{and}\quad p^*\alpha=\sigma(\xi)_{\vert\Theta}-\gamma^\flat(\xi^L-\xi^R).\]
By \eqref{target}, there exists an element $u\in\uu$ such that $(\xi-u)^L-(\xi-u)^R$ is tangent to $\Sigma$. Writing $\xi'=\xi-u$, this means that
\[\xi'^L-\xi'^R=p_*(\xi^L-\xi^R)=v\quad\text{and}\quad(\xi'^L-\xi'^R,\sigma(\xi')_{\vert\Theta})\in\jmath^*L_G.\]
Moreover, for any $x\in\uu_w$ and $z\in\z$,
\begin{align*}
\sigma(\xi')(x^R+z^R)&= \sigma(\xi)(x^R+z^R)-(\Ad_gu,x+z)\\
				&=\sigma(\xi)(x^R+z^R)-\gamma(u^L-u^R,x^R+z^R)\\
				&=\sigma(\xi)(x^R+z^R)-\gamma(\xi^L-\xi^R,x^R+z^R)\\
				&=p^*\alpha(x^R+z^R),
				\end{align*}
where the third equality follows since $\gamma$ vanishes along $\Sigma$. Then 
\[\imath^*\jmath^*\sigma(\xi')=\imath^*p^*\alpha=\alpha,\]
and this implies that $(v,\alpha)=(\xi'^L-\xi'^R,\imath^*\jmath^*\sigma(\xi'))\in\imath^*\jmath^*L_G.$ We therefore obtain, using Proposition \ref{sigma}, that
\[\psi_*\tau_{-\gamma}\jmath^*L_G=\varphi_*p_*\tau_{-\gamma}\jmath^*L_G=\varphi_*\imath^*\jmath^*L_G=L_Z.\]

Now we compute the distribution 
\[B_{\Theta,\gamma}=L_G\cap\imath_*L_{\gamma}\cap (\ker\psi_*\oplus T^*G).\]
In view of Example \ref{cd}, a pair $(g,\xi) \in \Theta\times \g$ lies in $B_{\Theta, \gamma}$ if and only if
\[ \Ad_{g}\xi-\xi \in \Ad_{g} \uu + \uu\]
and
\[\gamma_g(\xi^L- \xi^R, x^R+ z^R+y^L)= \frac{1}{2}(\Ad_{g} \xi+\xi, x + z+\Ad_{g}y)\quad\text{ for all $x,y\in\uu, z\in\z$}.\]
The second condition simplifies to
\[(\xi, x+y+z)=0\quad\text{ for all $x,y\in\uu, z\in\z$},\]
which occurs if and only if $\xi\in\p^\perp=\uu$. Therefore $B_{\Theta,\gamma}=\Theta\rtimes\uu.$
\end{proof}

Now let $K$ be a complex Lie group whose Lie algebra has an invariant inner product, and let $M$ be a quasi-Poisson $G\times K$-manifold with moment map
\[(\mu,\nu):M\longrightarrow G\times K.\]
Since the affine spaces $\Sigma$ and $\Theta$ are transverse to the conjugacy classes of $G$ \cite[Proposition 2.3]{sev:11}, their preimages are smooth submanifolds of $M$. Moreover, in view of \cite[Proposition 2.1]{sev:11}, there is an isomorphism 
\begin{equation}
\label{steiso}
\chi:\mu^{-1}(\Sigma)\xlongrightarrow{\sim}\mu^{-1}(\Theta)/U.
\end{equation}
In particular, the quotient $\mu^{-1}(\Theta)/U$ is alway a manifold, and the main Theorem \ref{main} equips both sides of this isomorphism with quasi-Poisson $Z\times K$-structures.

\begin{theorem}
\label{whittaker}
The map \eqref{steiso} is an isomorphism of quasi-Poisson $Z\times K$-manifolds.
\end{theorem}
\begin{proof}
Since it is clear that $\chi$ intertwines the quasi-Poisson moment maps of the two spaces, by \cite[Proposition 3.24]{bur.cra:05} it is sufficient to show that $\chi$ is f-Dirac. Consider the commutative diagram
\begin{equation*}
\begin{tikzcd}[row sep=large]
\mu^{-1}(\Sigma)\arrow[r, hook, "\imath"]\arrow[rd, swap, "\chi"]	&\mu^{-1}(\Theta)\arrow[r, hook, "\jmath"]\arrow[d, "\pi"]	&M\\
&\mu^{-1}(\Theta)/U.	&
\end{tikzcd}
\end{equation*}
The quasi-Poisson structure on $\mu^{-1}(\Sigma)$ is given by the Dirac pullback
\[L_\Sigma\coloneqq\tau_{-\mu^*\eta_G}\imath^*\jmath^*L_M,\]
and the quasi-Poisson structure on the quotient $\mu^{-1}(\Theta)/U$ is the Dirac bundle
\[L_\Theta\coloneqq \pi_*\tau_{-\mu^*\eta_G}\jmath^*L_M.\]

Chasing the diagram, we see that
\begin{align*}
\chi^*L_\Theta=\imath^*\pi^*\pi_*\tau_{-\mu^*\eta_G}\jmath^*L_M &=\imath^*\tau_{-\mu^*\eta_G}\jmath^*L_M\\
			&=\tau_{-\mu^*\eta_G}\imath^*\jmath^*L_M=L_\Sigma.
			\end{align*}
Here the second equality follows from the fact that the quotient map $\pi$ is both b-Dirac and f-Dirac by Theorem \ref{main}\ref{8b2exmfa}. This implies that the isomorphism $\chi$ is b-Dirac and therefore also f-Dirac.
\end{proof}

%
%
%
%
%
%
%
\subsection{The multiplicative Moore--Tachikawa TQFT}
\label{4.3}

For each complex reductive group $G$, Moore and Tachikawa \cite{moo.tac:12} conjectured the existence of a certain 2-dimensional topological quantum field theory (TQFT) valued in a category of holomorphic symplectic varieties. Such a TQFT is a symmetric monoidal functor defined on the category $\mathrm{Cob}_2$ of $2$-bordisms, where objects are 1-dimensional compact manifolds, morphisms are cobordisms between them, and composition is given by gluing. Its target is the category $\mathrm{HS}$ whose objects are complex reductive groups, a morphism from a group $G$ to a group $H$ is an affine symplectic variety with a Hamiltonian action of $G \times H$, and two morphisms $M:G \too H$ and $N : H \too K$ are composed by taking the symplectic reduction of $M \times N$ by the diagonal $H$-action. The Moore--Tachikawa conjecture states that, for each complex reductive group $G$, there is a TQFT 
\[\mathcal{F}_G:\mathrm{Cob}_2\too\mathrm{HS}\]
that sends the cap to the symplectic manifold $G \times \mathcal{S}$, where $\mathcal{S}$ is a Kostant slice. 
The non-triviality of this conjecture is exemplified by the case $G = \operatorname{SL}(3, \C)$, where $\mathcal{F}_G$ maps the pair of pants to the closure of the minimal nilpotent orbit of the exceptional Lie algebra $E_6$.

Using the gluing laws, the functor $\mathcal{F}_G$ is completely determined by the symplectic varieties to which it maps $n$-punctured spheres, which are known as the \emph{Moore--Tachikawa varieties}. Candidates for these varieties were constructed in \cite{gin.kaz:21} by defining their coordinate ring explicitly, reducing the conjecture to the statement that these rings are finitely generated. In type $A$, the conjecture was then settled by identifying the spaces constructed in \cite{gin.kaz:21} with Coulomb branches of star shaped quiver gauge theories \cite{bra.fin.nak:19}. In general Lie type, open dense submanifolds of the Moore--Tachikawa varieties can be realized as Hamiltonian reductions relative to the action of a group scheme known as the universal centralizer \cite{gin.kaz:21} (see also \cite{bie:21} for an equivalent construction).

In \cite{cro.may:21} it was shown that the construction of \cite{gin.kaz:21} is a special case of a more general procedure of symplectic reduction along a submanifold. In this section, we formulate a multiplicative version of the Moore--Tachikawa conjecture where the morphisms in the target category are quasi-Hamiltonian $G \times H$-varieties. The conjecture then requires a functor which sends the cap to the quasi-Hamiltonian $G$-space $G \times \Sigma$, where $\Sigma$ is the Steinberg cross-section discussed in Example \ref{steinslice}. The quasi-Hamiltonian varieties corresponding to the $n$-punctured spheres under this conjectural functor are multiplicative analogues of the Moore--Tachikawa varieties. We use our main Theorem \ref{main} to construct open dense submanifolds of these spaces and show that they satisfy the expected gluing laws.

Let $G$ be a simply-connected complex semisimple group and let $\Gad$ be its adjoint form. Consider the double
\[\mathbf{D} \coloneqq G\times\Gad ,\]
which is the quotient of the internal fusion double $D(G)$ of Example \ref{fusdouble} by the left action of the center of $G$. The quasi-Hamiltonian structure on $D(G)$ descends to a quasi-Hamiltonian structure on $\D$. Moreover, since the conjugation action of $G$ on itself descends to an action of $\Gad$, the double $\D$ is in fact a twisted presymplectic groupoid integrating the Cartan--Dirac structure on $G$. For any element $h \in G$, we write
\[Z_{\ad}(h) \coloneqq \{a \in \Gad \mid aha^{-1} = h\}\]
for its isotropy group with respect to this action, and $\g_h$ for the corresponding Lie subalgebra of $\g$.

Let $m$ and $n$ be non-negative integers, let $\Sigma \s G$ be the Steinberg cross-section, and identify it with the closed subvariety 
\[\Sigma_{m, n} \coloneqq \left\{(g,h) \in G^m \times G^n \mid g_1 = \cdots = g_m = h_1^{-1} = \cdots = h_n^{-1} \in \Sigma\right\}.\]
via the closed embedding
\begin{align*}
\Sigma&\hooklongrightarrow \qquad G^m\times G^n \\
	s&\longmapsto (s,\ldots,s,s^{-1},\ldots,s^{-1}).
	\end{align*}

\begin{proposition}
The submanifold $\Sigma_{m, n}$ is a reduction level of $G^{m+n}$ with stabilizer subalgebroid
\[B_{m,n}\coloneqq \left\{(s, x, y) \in \Sigma_{m,n}\times \g^m \times \g^n \mid x_1 + \cdots + x_m = y_1 + \cdots + y_n\text{ and }x_i, y_i\in\g_s\right\}.\]
\end{proposition}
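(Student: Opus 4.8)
The plan is to verify directly that the pair $(\Sigma_{m,n},0)$ satisfies Definition \ref{levels}, computing its stabilizer subalgebroid along the way; here $G^{m+n}$ carries the product of $m+n$ copies of the Cartan--Dirac structure $L_G$, twisted by $\eta^{(m+n)}\coloneqq\sum_k\mathrm{pr}_k^*\eta$. First, $\Sigma_{m,n}$ is a closed submanifold, being the image of the affine space $\Sigma\subset G$ under the proper injective immersion $s\mapsto(s,\dots,s,s^{-1},\dots,s^{-1})$, whose differential at $s$ sends $v$ to $(v,\dots,v,\iota_*v,\dots,\iota_*v)$ for $\iota:G\too G$ the inversion. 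I will work with the identification $L_G\cong G\rtimes\g$ of Example \ref{cd}, so that $L_{G^{m+n}}$ is the action Lie algebroid of the coordinatewise conjugation action: a section over a point is a tuple $(g;x_1,\dots,x_m,y_1,\dots,y_n)\in G^{m+n}\times\g^{m+n}$ whose anchor is the corresponding family of conjugation vector fields $\xi^L-\xi^R$ and whose $1$-form part is $\bigoplus_i\sigma(x_i)\oplus\bigoplus_j\sigma(y_j)$.

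The key external input is a small piece of linear algebra distilled from Steinberg's theorem. For $s\in\Sigma$, right-trivialize $T_sG\cong\g$ and let $\overline{T_s\Sigma}\subseteq\g$ be the image of $T_s\Sigma$. Since $\Sigma$ meets the conjugacy class $\O_s$ transversally (as $\Sigma$ is a cross-section for the regular classes) and $\dim\Sigma+\dim\O_s=r+(\dim\g-r)=\dim\g$, the subspaces $\overline{T_s\Sigma}$ and the right-trivialization of $T_s\O_s$ are complementary in $\g$; the latter is $\mathrm{im}(\Ad_{s^{-1}}-1)=\g_s^\perp$ by invariance of the form. Hence $\overline{T_s\Sigma}\cap\g_s^\perp=0$ and, taking perpendiculars, $(\overline{T_s\Sigma})^\perp\cap\g_s=0$; these two facts drive the whole computation.

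Next I compute $A_{\Sigma_{m,n},0}=L_{G^{m+n}}\cap(T\Sigma_{m,n}\oplus T\Sigma_{m,n}^\circ)$ at a point $p=(s,\dots,s,s^{-1},\dots,s^{-1})$. The condition that the anchor lie in $T_p\Sigma_{m,n}$ forces all the vectors $x_i^L|_s-x_i^R|_s$ to agree and to equal some $w\in T_s\Sigma$, with $y_j^L|_{s^{-1}}-y_j^R|_{s^{-1}}=\iota_*w$ for every $j$; using $\iota_*(\xi^L|_s)=-\xi^R|_{s^{-1}}$ and $\iota_*(\xi^R|_s)=-\xi^L|_{s^{-1}}$, this says $x_i-x_1\in\g_s$ and $y_j-x_1\in\g_s$ for all $i,j$, together with $(\Ad_{s^{-1}}-1)x_1\in\overline{T_s\Sigma}$. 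As the left-hand side of the last relation also lies in $\mathrm{im}(\Ad_{s^{-1}}-1)=\g_s^\perp$, the first fact forces $x_1\in\g_s$, whence every $x_i$ and $y_j$ lies in $\g_s$ and the anchor vanishes. Pairing the $1$-form part against $(w',\dots,w',\iota_*w',\dots,\iota_*w')$ and using $x_i,y_j\in\g_s$, a short computation gives the value $(\sum_i x_i-\sum_j y_j,\,\overline{w'})$, so the annihilator condition reads $\sum_i x_i-\sum_j y_j\in(\overline{T_s\Sigma})^\perp$; since this element lies in $\g_s$, the second fact forces $\sum_i x_i=\sum_j y_j$. Conversely, if $x_i,y_j\in\g_s$ with $\sum_i x_i=\sum_j y_j$, the anchor is zero and the same pairing vanishes, so the section lies in $A_{\Sigma_{m,n},0}$. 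This identifies $A_{\Sigma_{m,n},0}$ with $A_{m,n}$; its rank is then constant, equal to $(m+n)\dim\g_s-\dim\g_s=(m+n-1)r$, since every element of $\Sigma$ is regular. Finally $p_T(A_{m,n})=0$, so the last condition of Definition \ref{levels} holds vacuously, and likewise $\eta^{(m+n)}$ restricts to $0$ on $\Sigma_{m,n}$ because $\overline{T_s\Sigma}\cap\g_s^\perp=0$ shows $T\Sigma_{m,n}$ meets every nondegenerate leaf in the zero space.

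I expect the only genuine difficulty to be the bookkeeping: keeping the inversion $\iota$, the left/right trivializations, and the map $\sigma$ consistent, since the conceptual heart --- transversality of $\Sigma$ becoming the complementarity $\overline{T_s\Sigma}\oplus\g_s^\perp=\g$ used in both halves --- is elementary. It is worth noting that, unlike $\Sigma\subset G$ itself, the submanifold $\Sigma_{m,n}$ is \emph{not} transverse to the nondegenerate leaves of $G^{m+n}$ when $m+n\geq 2$, so Lemma \ref{ptrans} cannot be invoked and the direct verification above is necessary.
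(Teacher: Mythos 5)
Your proof is correct and follows essentially the same route as the paper: compute $A_{\Sigma_{m,n},0}=L_{G^{m+n}}\cap(T\Sigma_{m,n}\oplus T\Sigma_{m,n}^\circ)$ directly, use transversality of $\Sigma$ to the regular conjugacy classes to force the anchor components to vanish (hence $x_i,y_j\in\g_s$), and then read off $\sum_i x_i=\sum_j y_j$ from the annihilator condition. If anything you are slightly more explicit than the paper, which leaves implicit the dual fact $(\overline{T_s\Sigma})^{\perp}\cap\g_s=0$ that you correctly isolate as the input needed for the ``only if'' direction of the second condition.
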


\begin{proof}
Since the restriction of the Cartan $3$-form $\eta_G$ to the Steinberg cross-section vanishes, we must compute the distribution
\[B_{m,n}=L_{G^{m+n}}\cap (T\Sigma_{m,n}\oplus T\Sigma_{m,n}^\circ).\]
Let $\iota:G\too G$ be the inversion map. At every point $s\in\Sigma_{m,n}$, the fiber of $B_{m,n}$ consists of the points $(x,y)\in\g^m\times\g^n$ such that
\[(x_1^L - x_1^R)_s = \cdots = (x_m^L - x_m^R)_s = \iota_*(y_1^L - y_1^R)_{s^{-1}} = \cdots = \iota_*(y_n^L - y_n^R)_{s^{-1}} \in T\Sigma,\]
and
\[\left(x^{L\vee} + x^{R\vee}, y^{L\vee} + y^{R\vee}\right) \in T\Sigma_{m, n}^\circ.\]

Because $\Sigma$ intersects each conjugacy class transversally, all the terms in the first condition must vanish and so we obtain
\[\Ad_s (x_i) = x_i\quad \text{ and} \quad\Ad_s (y_i) = y_i \quad\text{ for all } i.\]
The second condition then implies that $(x^L,y^L) \in T\Sigma_{m, n}^\circ$, and this occurs if and only if
\[x_1+\cdots+x_m=y_1+\cdots+y_m.\]
It follows that $B_{m,n}$ is the desired subalgebroid and therefore $\Sigma_{m,n}$ is a reduction level.
\end{proof}

Consider the source-connected subgroupoid of $\D^{m+n}$ given by
\[H_{m, n} \coloneqq \{(s, a, b) \in \Sigma_{m,n}\times \Gad^m \times \Gad^n \mid a_1 \cdots a_m = b_1 \cdots b_n\text{ and }a_i, b_i \in Z_{\ad}(s)\}\]
which integrates $B_{m,n}$. We may view $\D^{m+n}$ as a quasi-Hamiltonian $G^{m+n} \times G^{m+n}$-space with moment map
\begin{align*}
\D^{m+n}&\too G^{m+n}\times G^{m+n}\\
		(g,a)&\mtoo (aga^{-1},g^{-1}).
		\end{align*}
Applying Corollary \ref{cormain} and reducing at the reduction level $\Sigma_{m,n}$ of the second factor of $G^{m+n}$, we obtain the quotient
\[\mathfrak{Z}_{m, n} \coloneqq (\Sigma\times\Gad^{m+n})/H_{m,n}.\]
This quotient is always a manifold because $H_{m,n}$ is a closed subgroupoid of $\Sigma\times \Gad^{m+n}$ and therefore acts properly. Reduction gives it the structure of a quasi-Hamiltonian $G^{m + n}$-space with moment map
\begin{align*}
\Z_{m,n}\,\,&\too \qquad G^{m+n} \\
(s,a,b)&\mtoo (a_is^{-1}a_i^{-1},b_isb_i^{-1}),
\end{align*}
and we refer to it as an \emph{open multiplicative Moore--Tachikawa variety}. 

\begin{example}
Taking $m=0$ and $n=1$ we obtain the space
\[\mathfrak{Z}_{0, 1} =\Sigma\times\Gad,\]
which is a quasi-Hamiltonian $G$-space studied in \cite[Section 2.2]{bal:21}. Taking $m=n=1$ gives
\[\mathfrak{Z}_{1, 1}\cong G^{\mathrm{r}}\times\Gad,\]
which is the restriction of the groupoid $\D$ to the regular locus of $G$.
\end{example}

Under the conjectural multiplicative Moore--Tachikawa correspondence, the space $\mathfrak{Z}_{m,n}$ is expected to be an open dense subset of the variety associated with the genus-0 cobordism from $m$ circles to $n$ circles. Accordingly, the composition of $\mathfrak{Z}_{m, n}$ and $\mathfrak{Z}_{\tm, \tn}$ corresponding to gluing one of the $n$ and $\tm$ circles, which is given by quasi-Hamiltonian reduction of the fusion product relative to the diagonal $G$-action, should be isomorphic to $\mathfrak{Z}_{m + \tm - 1, n + \tn - 1}$. The rest of this section is devoted to the proof of this fact.

Consider the quasi-Hamiltonian moment maps
\[(\mu,\mu_0):\mathfrak{Z}_{m,n}\too G^{m+n-1}\times G\quad\text{and}\quad(\tmu,\tmu_0):\mathfrak{Z}_{\tm,\tn}\too G^{\tm+\tn-1}\times G,\]
where $\mu_0$ and $\tmu_0$ are defined by
\[\mu_0(s,a,b)=b_1sb_1^{-1}\quad\text{and}\quad\tmu_0(\ts, \tilde{a}, \tilde{b})=\tilde{a}_1\ts^{-1} \tilde{a}_1^{-1}.\]
Using these two maps to reduce along the diagonal, we see that the open Moore--Tachikawa spaces satisfy the following recursion.

\begin{theorem}
\label{mooretac}
There is a canonical isomorphism of quasi-Hamiltonian $G^{m + \tm - 1} \times G^{n + \tn - 1}$-spaces
\[\Z_{m+\tm-1,n+\tn-1}\cong\left(\Z_{m,n}\circledast\Z_{\tm,\tn}\right)\sll{1}G.\]
\end{theorem}

\begin{proof}
We first show that $\Z_{m+\tm-1,n+\tn-1}$ is isomorphic to 
\[(\Z_{m,n}\times_G\Z_{\tm,\tn})/G,\]
where the fiber product is taken along the maps $\mu_0$ and $\imath\circ\tmu_0$. Since $\Sigma$ is a slice for the conjugation action of $G$ on $G^{\mathrm{r}}$, 
\[\Z_{m,n}\times_G\Z_{\tm,\tn}= \left\{(s,a,b,\ts, \tilde{a}, \tilde{b})\in \Z_{m,n}\times\Z_{\tm,\tn}\mid s=\ts\text{ and } \tilde{a}_1^{-1} b_1 \in Z_{\ad}(s)\right\}.\]
We can therefore define the $G^{m + \tm - 1} \times G^{n + \tn - 1}$-equivariant map
\begin{align*}
F: \Z_{m + \tm - 1, n + \tn - 1} &\too \Z_{m,n}\times_G\Z_{\tm,\tn}\\
		(s,a, a',b,b')&\longmapsto(s,a,\bar{b},s,\bar{a}',b').
		\end{align*}
Here $a\in G^m, a'\in G^{\tm-1}, b\in G^{n-1}, b'\in G^{\tn}$, and for any tuple $h=(h_1,\ldots,h_k)$ of elements of $G$ we write $\bar{h}\coloneqq (1,h_1,\ldots, h_k).$ The map $F$ descends to an isomorphism
\[\overline{F}:\Z_{m + \tm - 1, n + \tn - 1} \too (\Z_{m,n}\times_G\Z_{\tm,\tn})/G.\]

To show that $\overline{F}$ is a Dirac diffeomorphism, consider the commutative diagram
\begin{equation*}
\begin{tikzcd}[column sep=large, row sep=large]
\Z_{m + \tm - 1, n + \tn - 1}		\arrow[r, "F"]\arrow[rd, swap, "\overline{F}"]			&\Z_{m,n}\times_G\Z_{\tm,\tn} \arrow[d]\\
						&(\Z_{m,n}\times_G\Z_{\tm,\tn})/G.
\end{tikzcd}
\end{equation*}
The vertical map is b-Dirac by Theorem \ref{main}\ref{8b2exmfa}, and the horizontal map $F$ descends from a diagram
\begin{equation*}
\begin{tikzcd}[column sep=large, row sep=large]
\Sigma\times\Gad^{m+\tm-1}\times \Gad^{n+\tn-1}		\arrow[r]\arrow[d]		& \Sigma\times\Gad^{m+n}\times_G \Gad^{\tm+\tn}\arrow[d]\\
\Z_{m + \tm - 1, n + \tn - 1}			\arrow[r, "F"]				&\Z_{m,n}\times_G\Z_{\tm,\tn}.
\end{tikzcd}
\end{equation*}
Here the vertical arrows are quotients by the actions of $H_{m + \tm - 1, n + \tn - 1}$ and $H_{m,n}\times H_{\tm,\tn}$, and are therefore f-Dirac. The top horizontal map is the restriction of a map
\[\D^{k-2}\,\,\longrightarrow\,\, \D^{k}\,\cong\, \D^{k-2}\times \D^2\]
which has the form
\[\,\,\,\,\,(h,c)\,\,\longmapsto\,\, ((h,c),(h_1,h_2,1,1)).\]
---since the quasi-Hamiltonian $2$-form on $\D^k$ is a sum of the quasi-Hamiltonian $2$-forms on $\D^{k-2}$ and $\D^2$, and the second of these vanishes along the identity section of the presymplectic groupoid $\D^2$, this map is also b-Dirac. It then follows from \cite[Lemma 1.16]{bal:21} that $F$ is also b-Dirac.

Then, since $\overline{F}$ is the composition of two b-Dirac maps, it is also b-Dirac and therefore a Dirac diffeomorphism. By Proposition \ref{ooysgjbr}, the quasi-Poisson structure on its target agrees with the structure of the reduced fusion product $\left(\Z_{m,n}\circledast\Z_{\tm,\tn}\right)\sll{1}G$, completing the proof.\qedhere\\
\end{proof}

%
%
%
%
%
%
%
\section{Global version and interpretation in shifted symplectic geometry}
\label{fifth}

We now give a global version of the main Theorem \ref{main} in the case of a reduction level $(S, \gamma)$ of $X$ and a strong Dirac map $\Phi:M\too X$ from a nondegenerate Dirac manifold $M$. That is, we allow for quotients by Lie groupoids which integrate the stabilizer subalgebroid $B_{S, \gamma}$ and are not necessarily source-connected. We then give an interpretation of this reduction procedure in shifted symplectic geometry, as an intersection of two Lagrangians.

\subsection{Global version}
Let $(X,L_X)$ be an $\eta_X$-twisted Dirac manifold with presymplectic groupoid $\G\tto X$, and write $\Omega$ for the corresponding presymplectic form on $\G$. A \emph{Hamiltonian $\G$-space} \cite{xu:04} is a nondegenerate Dirac manifold $M$ with presymplectic form $\omega\in\Omega^2(M)$ and an action of $\G$ along a map $\Phi:M\too X$ such that
\begin{itemize}[topsep=1.5pt]
\item the graph of this action, which is given by
\[\Gamma = \{(g, x, g \cdot x) \in \G \times M \times M \mid {\sss}(g) = \mu(x)\},\]
is isotropic with respect to $(\Omega, \omega, -\omega)$,
\item $d\omega = -\Phi^*\eta$, and
\item $\ker \Phi \cap \ker \omega = 0$.
\end{itemize}
The first bullet point implies that $\Phi$ is a forward Dirac map, and the last two are equivalent to the condition that $\Phi$ is a strong Dirac map.

This notion generalizes both Hamiltonian and quasi-Hamiltonian manifolds for the action of a group $G$. When 
\[\G=T^*G\tto\g^*\]
is the symplectic groupoid of the Kirillov--Kostant--Souriau Poisson structure on $\g^*$, $M$ is a Hamiltonian $\G$-space if and only if it is Hamiltonian for the induced action of the group $G$. When 
\[\G=D(G) \tto G\]
is the quasi-Hamiltonian internal fusion double which integrates the Cartan--Dirac structure on $G$, a Hamiltonian $\G$-space is the same thing as a quasi-Hamiltonian $G$-manifold.

Symplectic and quasi-Hamiltonian reduction also generalize to the case of Hamiltonian $\G$-spaces---at any point $x$ of $X$ which is a regular value of $\mu$, if the quotient 
\[M \sll{x} \G \coloneqq \mu^{-1}(x) / \G_x\]
is a manifold then it has a natural reduced symplectic structure \cite[Theorem 3.18]{xu:04}. If the isotropy group $\G_x$ is connected, this reduction is a special case of Theorem \ref{main}. It is therefore natural to try to extend Theorem \ref{main} by taking quotients with respect to groupoids that integrate the stabilizer subalgebroid $B_{S, \gamma}$ but that are not source-connected. To do this, we will need the following definition.

\begin{definition}
\label{gpoids}
Let $(S,\gamma)$ be a reduction level in $X$. A \emph{stabilizer subgroupoid} of $S$ in $\G$ is a Lie groupoid $\HH \tto S$ together with a Lie groupoid morphism $f : \HH \too \G$ which
\begin{itemize}[topsep=1.5pt]
\item induces a Lie algebroid isomorphism from $\Lie(\HH)$ to $B_{S, \gamma}$, and such that 
\item $f^*\Omega ={\ttt^*\gamma - \sss^*\gamma}$.
\end{itemize}
\end{definition}

If $X$ is a Poisson manifold, {$\gamma = 0$}, and $\HH$ is source-connected, the second condition of Definition \ref{gpoids} is automatically satisfied \cite[Proposition 7.2]{cat.zam:09}. It is also easy to see that, in general, this condition holds along the identity section of any groupoid.

The usual reduction of Hamiltonian $\G$-spaces then has the following generalization.

\begin{theorem}\label{0rue6qjh}
Let $(S, \gamma)$ be a reduction level in $X$ and let $\HH \tto S$ be a stabilizer subgroupoid, and suppose that 
\[(\Phi,\Psi):M\longrightarrow X\times Y\]
is a Hamiltonian $\G_X\times\G_Y$-space such that $\HH$ acts freely and properly on $\Phi^{-1}(S)$. Then $S$ intersects $\Phi$ transversally and
\[M \sll{S} \G_X \coloneqq \Phi^{-1}(S) / \HH\]
is a Hamiltonian $\G_Y$-space. The map $\Psi$ descends to a moment map $\overline{\Psi}:M \sll{S} \G_X\too Y$, and the corresponding $2$-form $\overline{\omega}$ is uniquely characterized by the property that
\[\pi^*\overline{\omega} = \jmath^*\omega - \Phi^*\gamma,\]
where $\pi$ is the quotient map from $\Phi^{-1}(S)$ to $M \sll{S} \G$ and $\jmath$ is the inclusion of $\Phi^{-1}(S)$ into $M$. In particular, if $Y$ is a point then $M \sll{S} \G_X$ is a symplectic manifold.
\end{theorem}

\begin{proof}
By Proposition \ref{freeness}, $S$ intersects $\Phi$ transversally and therefore $\Phi^{-1}(S)/\HH$ is a manifold. Once again let $C \coloneqq \Phi^{-1}(S)$ and consider the action groupoids
\[\begin{tikzcd}
\HH \ltimes C  \arrow[shift right=1, swap]{d}{\sss_C} \arrow[shift left=1]{d}{\ttt_C} \arrow{r}{k} & (\G_X \times \G_Y) \ltimes M \arrow[shift right=1, swap]{d}{\sss_M} \arrow[shift left=1]{d}{\ttt_M} \\
C \arrow{r}{\jmath} & M.
\end{tikzcd}\]
The condition that the graph of the action of $\G_X \times \G_Y$ on $M$ is isotropic is equivalent to 
\[\ttt_M^*\omega - \sss_M^*\omega = \mathrm{pr}_{\G_X \times \G_Y}^*(\Omega_X, \Omega_Y),\]
where we write $\mathrm{pr}_{\G} : \G \ltimes M \too \G$ for the projection. Let $f : \HH \too \G_X$ be the groupoid morphism corresponding to the stabilizer subgroupoid $\HH$ and consider the commutative diagram
\[\begin{tikzcd}
\HH \ltimes C \arrow[shift right=1, swap]{d}{\sss_C} \arrow[shift left=1]{d}{\ttt_C} \arrow{r}{\mathrm{pr}_\HH} & \HH \arrow[shift right=1, swap]{d}{\sss} \arrow[shift left=1]{d}{\ttt} \\
C \arrow{r}{\Phi} & S
\end{tikzcd}\]
We have
\begin{align*}
\sss_C^*\jmath^*\omega - \ttt_C^*\jmath^*\omega&= k^*(\sss_M^*\omega - \ttt_M^*\omega) \\
									&= -k^*\mathrm{pr}_{\G_X \times \G_Y}^*(\Omega_X, \Omega_Y) \\
									&= - \mathrm{pr}_{\HH}^*(f^*\Omega_X) \\
									&= \mathrm{pr}_{\HH}^*(\sss^*\gamma - \ttt^*\gamma) \\
									&= \sss_C^*\Phi_{C}^*\gamma - \ttt_C^* \Phi_{C}^*\gamma,
\end{align*}
and therefore
\[\sss_C^*(\jmath^*\omega - \Phi_{C}^*\gamma) = \ttt_C^*(\jmath^*\omega - \Phi_{C}^*\gamma).\]
It follows that $\jmath^*\omega - \Phi^*\gamma$ descends to a 2-form $\overline{\omega}$ on the quotient $C / \HH$. A simple diagram chase shows that 
\[\ttt_Q^*\overline{\omega} - \sss_Q^*\overline{\omega} = \mathrm{pr}_{\G_Y}^*\Omega_Y,\]
where $\sss_Q$ and $\ttt_Q$ are the source and target maps of the action groupoid $\G_Y \ltimes Q \tto Q$. Moreover, we compute that
\[\pi^* d\overline{\omega} = \jmath^*d\omega - \Phi_{C}^*d\gamma = -\jmath^*(\Phi^*\eta_X + \Psi^*\eta_Y) + \Phi_{C}^*\imath^*\eta_X = - \jmath^*\Psi^*\eta_Y = - \pi^* \overline{\Psi}^*\eta_Y,\] 
and therefore $d\overline{\omega} = - \overline{\Psi}^*\eta_Y$. Finally, by Proposition \ref{distribution},
\[\ker\overline{\omega}\cap\ker\overline{\Psi}=\pi_*(\ker(\jmath^*\omega-\Phi^*\gamma)\cap\ker\Psi)=\pi_*(\im\rho_C)=0.\]
Therefore $\overline{\omega}$ gives $C / \HH$ the structure of a Hamiltonian $\G_Y$-space.
\end{proof}

\begin{example}\cite[Theorem 3.16]{xu:04}
Let $\G_X \tto X$, $\G_Y \tto Y$, and $\G_Z \tto Z$ be twisted presymplectic groupoids, let $M$ be a Hamiltonian $\G_X \times \overline{\G_Y}$-space, and $N$ a Hamiltonian $\G_Y \times \overline{\G_Z}$-space. Then 
\[M \times N \too X \times \bar{Y} \times Y \times \bar{Z}\]
is a Hamiltonian $\G_X \times \overline{\G_Y} \times \G_Y \times \overline{\G_Z}$-space. By Section \ref{third}, the diagonal $Y_\Delta \s \overline{Y} \times Y$ is a reduction level with stabilizer subgroupoid the diagonal of $\overline{\G_Y} \times \G_Y$. It follows from Theorem \ref{0rue6qjh} that if $\G_Y$ acts freely and properly on $M$ and $N$, then 
\[(M \times_Y N)/\G_Y\]
is a Hamiltonian $\G_X \times \overline{\G_Z}$-space.
\end{example}

\begin{example}
If $\G$ is a symplectic groupoid and $\gamma = 0$, then Theorem \ref{0rue6qjh} recovers the notion of symplectic reduction along a submanifold developed in \cite[Theorem A(ii)]{cro.may:21}.
\end{example}

%
%
%
%
%
%
%
\subsection{Interpretation in shifted symplectic geometry}
When $Y$ is a point, Theorem \ref{0rue6qjh} can also be seen as a consequence of interpreting our construction in the framework of shifted symplectic geometry \cite{pan.toe.vaq.vez:13}, as an intersection of two Lagrangians in a 1-shifted symplectic stack. Recall that there is an equivalence between 1-shifted symplectic 1-stacks and twisted presymplectic groupoids up to Morita equivalence \cite[Section 1.2.3]{cal:20}. Namely, given a Lie groupoid 
\[\G \tto X,\]
a 1-shifted symplectic structure on the quotient stack $[X/\G]$ consists precisely of a 2-form $\Omega$ on $\G$ and a 3-form $\eta$ on $X$ satisfying the definition of a presymplectic groupoid. Two twisted presymplectic groupoids correspond to the same 1-shifted symplectic stack if and only if they are Morita equivalent in the sense of \cite{xu:04}.

One of the main existence results for shifted symplectic structures is through the notion of Lagrangian morphisms. Given an $n$-shifted symplectic derived stack $Z$ and two Lagrangians $L_1 \too Z$ and $L_2 \too Z$, the fibre product $L_1 \times_Z L_2$ (in the sense of derived algebraic geometry) is $(n-1)$-shifted symplectic \cite[Theorem 2.9]{pan.toe.vaq.vez:13}. In the case where $Z = [X/\G]$ is a 1-shifted symplectic 1-stack, the notion of Lagrangians has the following simple description. Let $\HH \tto S$ be a Lie groupoid together with a Lie groupoid morphism $f : \HH \too \G$, which gives a morphism of quotient stacks
\[[f] : [S/\HH] \too [X/\G].\]
An \emph{isotropic structure} on $[f]$ is a $2$-form $\gamma\in\Omega^2(S)$ such that $f^*\Omega = \ttt^*\gamma - \sss^*\gamma\quad\text{and}\quad -d\gamma = f^*\eta.$ This structure is \emph{Lagrangian} if the induced morphism
\begin{equation}
\label{ldm50cv0}
\mathbb{T}_f  \too \mathbb{L}_{[S/\HH]}
\end{equation}
is a quasi-isomorphism, where 
\[\mathbb{T}_f \coloneqq f^*\mathbb{T}_{[X/\G]}[-1] \oplus \mathbb{T}_{[S/\HH]}\]
is the relative tangent complex endowed with the differential obtained from the map 
\[\mathbb{T}_{[S/\HH]} \too f^*\mathbb{T}_{[X/\G]},\]
and $\mathbb{L}_{[S/\HH]}$ is the cotangent complex.

The notion of Hamiltonian $\G$-space has a simple interpretation \cite[Example 1.31]{cal:20} in this setting: given a groupoid action of $\G \tto X$ along a smooth map $\Phi : M \too X,$ a 2-form $\omega$ on $M$ gives it the structure of Hamiltonian $\G$-space if and only if $\omega$ is a Lagrangian structure on the morphism of quotient stacks
\[[\Phi] : [M/\G] \too [X/\G].\]
To obtain 0-shifted symplectic structures---that is, ordinary symplectic structures---it then suffices to find another Lagrangian morphism $L \too [X/\G]$ and take the fibre product 
\[L \times_{[X/\G]} [M/\G].\]
The main result of this section is to show that Lagrangian \emph{substacks} of $[X/\G]$ are precisely stablizer subgroupoids of reduction levels.

\begin{theorem}\label{ex4wuat6}
Let $[X/\G]$ be a 1-shifted symplectic 1-stack and $\HH \tto S$ be a Lie subgroupoid of $\G \tto X$. A 2-form $\gamma\in\Omega^2(S)$ is a Lagrangian structure on 
\[[S/\HH] \too [X/\G]\]
if and only if $(S, \gamma)$ is a reduction level and $\HH \tto S$ is a stabilizer subgroupoid of $(S, \gamma)$.
\end{theorem}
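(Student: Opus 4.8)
The plan is to unwind the definition of a Lagrangian structure on the morphism of quotient stacks $[f]:[S/\HH]\too[X/\G]$ and match its two constituent parts—the isotropic condition and the nondegeneracy (quasi-isomorphism) condition—against the two bullet points in Definition~\ref{gpoids} together with the conditions defining a reduction level in Definition~\ref{levels}. First I would record that an isotropic structure on $[f]$ with $2$-form $-\gamma$ is by definition a $2$-form $-\gamma\in\Omega^2(S)$ satisfying $f^*\Omega=\sss^*(-\gamma)-\ttt^*(-\gamma)$ and $d(-\gamma)=f^*\eta$; the first of these is precisely the second condition in Definition~\ref{gpoids} (up to sign), and the second, after pulling back along the identity bisection and using $p_T(A_{S,\gamma})\s\ker(\imath^*\eta+d\gamma)$ read on $S$ rather than on $\HH$, is equivalent to the second bullet in Definition~\ref{levels}. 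Here I would use that $\Lie(\HH)\cong A_{S,\gamma}$ already forces $\HH$ to sit over $S$ as a subgroupoid whose source-fiber tangent spaces are $p_T(A_{S,\gamma})$, so the condition $d\gamma|_{p_T(A_{S,\gamma})}=\imath^*\eta|_{p_T(A_{S,\gamma})}$ is exactly the statement that $d(-\gamma)$ and $f^*\eta$ agree infinitesimally, which for a source-connected-enough situation is equivalent to $d(-\gamma)=f^*\eta$ on all of $S$; I should be careful to phrase this so that the equivalence is genuinely iff and not merely an implication, perhaps by taking $d\gamma=\imath^*\eta$ as part of the defining data when $\HH$ is not source-connected.

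Next I would analyze the Lagrangian (quasi-isomorphism) condition on \eqref{ldm50cv0}. The relative tangent complex $\mathbb{T}_f=f^*\mathbb{T}_{[X/\G]}[-1]\oplus\mathbb{T}_{[S/\HH]}$ and the cotangent complex $\mathbb{L}_{[S/\HH]}$ are both two-term complexes of vector bundles on $S$ (in degrees $-1,0$ and $0,1$ respectively, up to conventions), and the induced map is a map of two-term complexes. The key computation is that, using the presymplectic groupoid structure (the map $(\rho,\phi):\Lie(\G)\too\TM$ of \eqref{grpdmaps} realizing $L_X$, here pulled back to $S$), the mapping cone of \eqref{ldm50cv0} is quasi-isomorphic to a two-term complex built out of $A_{S,\gamma}$ and $TS\cap p_T(L_X)$, and it is acyclic exactly when $A_{S,\gamma}=L_X\cap\imath_*L_\gamma$ has the "correct" rank—that is, when $A_{S,\gamma}$ is a subbundle, i.e.\ has constant rank, which is precisely the first condition in Definition~\ref{levels}. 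Concretely I would identify $H^0$ of the cone with the failure of $p_T(A_{S,\gamma})$ to fill up the expected directions and $H^{-1}$ with the failure of the pairing between $A_{S,\gamma}$ and $TS/p_T(A_{S,\gamma})$ to be perfect, and show both vanish iff $\imath^*L_X$ (gauge-shifted by $-\gamma$) is a genuine Dirac structure with the stated stabilizer, which is the constant-rank condition.

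Having both directions in hand, the proof assembles as: a $2$-form $-\gamma$ is Lagrangian on $[S/\HH]\too[X/\G]$ iff (isotropic part) $f^*\Omega=\sss^*\gamma-\ttt^*\gamma$, $d\gamma=\imath^*\eta$ on $S$, and $p_T(A_{S,\gamma})\s\ker(\imath^*\eta+d\gamma)$, and (nondegeneracy part) $A_{S,\gamma}$ has constant rank and $f_*:\Lie(\HH)\xrightarrow{\sim}A_{S,\gamma}$; the first two of these are exactly the two bullets of Definition~\ref{gpoids} plus the twist-compatibility, and the last two say $(S,\gamma)$ is a reduction level with $\HH$ its stabilizer subgroupoid. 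The main obstacle I expect is the identification of the cohomology of the cone of \eqref{ldm50cv0} with the constant-rank condition on $A_{S,\gamma}$: this requires carefully writing down the two-term complexes $\mathbb{T}_f$ and $\mathbb{L}_{[S/\HH]}$ in terms of $TS$, $\Lie(\HH)$, $\Phi^*T^*X$ and the anchor/$\phi$ maps, tracking all degree and sign conventions, and checking that the symplectic pairing on $\Lie(\G)\oplus T^*X$ (equivalently the Courant pairing restricted to $L_X$) induces exactly the duality that makes "quasi-isomorphism" equivalent to "$A_{S,\gamma}$ is a Lagrangian subbundle of $\imath^*L_X$ relative to the degenerate form $\imath^*\eta+d\gamma$"—i.e.\ to Proposition~\ref{subalg} together with constant rank. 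Once that linear-algebra-over-$S$ dictionary is set up, the rest is bookkeeping.
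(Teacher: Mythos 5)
Your overall strategy coincides with the paper's: split the Lagrangian condition into its isotropic part (which matches the second bullet of Definition \ref{gpoids} and the twist condition on the reduction level) and its nondegeneracy part, namely the quasi-isomorphism \eqref{ldm50cv0}, and show that the latter encodes the remaining conditions. The problem is that the step you explicitly defer as ``the main obstacle''---writing out the map of two-term complexes and computing its cohomology---is the entire mathematical content of the theorem, so what you have is a plan rather than a proof. In the paper this computation is the diagram \eqref{64x0xucj}, with top row $L \too L_{X\vert S}\oplus TS \too TX_{\vert S}$ and bottom row $0 \too T^*S \too L^*$ for $L=\Lie(\HH)$, and the induced map on middle cohomology is identified explicitly as $\varphi: p_T^{-1}(TS)/L \too \ker(p_{T,L}^*)$, $a\mapsto \imath^*p_{T^*}(a)-\gamma^\flat(p_T(a))$.

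More seriously, your description of what acyclicity encodes is not correct as stated. You claim the cone is acyclic ``exactly when $A_{S,\gamma}$ has constant rank.'' What the computation actually shows is that vanishing of the middle cohomology is equivalent to the pointwise equality $\Lie(\HH)=A_{S,\gamma}$; constant rank of $A_{S,\gamma}$ is then a \emph{consequence} of this (because $\Lie(\HH)$ is a vector bundle), not the condition itself---if $\Lie(\HH)\neq A_{S,\gamma}$ somewhere, the map fails to be a quasi-isomorphism even when $A_{S,\gamma}$ happens to have constant rank. Moreover, for the ``if'' direction one must also check that $\Lie(\HH)\cong A_{S,\gamma}$ forces the outer cohomologies to match, and this rests on two nontrivial exact sequences (the paper's Lemma \ref{19xp3wff}): $0 \too A_{S,\gamma} \too L_X\cap(TS\oplus T^*X) \too \ker(p_{T,A}^*) \too 0$ and $0 \too A_{S,\gamma}\cap T^*X \too T^*X \too (\im p_T+TS)^* \too 0$, whose proofs use the Lagrangian property of $L_X$ in an essential way (e.g.\ $p_{T^*}(L_X\cap(TS\oplus T^*X))=p_T(L_X\cap(TX\oplus TS^\circ))^\circ$). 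Neither ingredient appears in your sketch, and without them the equivalence cannot be closed. Your flag about $d\gamma=\imath^*\eta$ versus the weaker reduction-level condition $p_T(A_{S,\gamma})\s\ker(\imath^*\eta+d\gamma)$ is a legitimate subtlety to raise, but your proposed fix of building $d\gamma=\imath^*\eta$ into the definition would alter the statement being proved rather than prove it.
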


\begin{remark}
\label{abzwgl6w}
If $(S, \gamma)$ is a reduction level of $X$ and $\gamma$ extends to a $2$-form $\tilde{\gamma}\in\Omega^2(X)$, the stabilizer subalgebroid $B_{S,\gamma}$ is equal to the stabilizer subalgebroid of the reduction level $(S, 0)$ relative to the gauge-transformed Dirac structure $\tau_{\tilde{\gamma}}L_X.$ In the holomorphic or algebraic setting such a global extension may not exist, but this still allows us to reduce local results to the case where $\gamma = 0$, as we will do here.
\end{remark}

\begin{lemma}\label{19xp3wff}
Let $(S, \gamma)$ be a reduction level in $X$ and let $p_{T,B} : B_{S, \gamma} \too TS$ be the anchor map. Then the sequences
\[\begin{tikzcd}[row sep = -1mm]
0 \arrow{r} & B_{S, \gamma} \arrow{r} & L_X \cap (TS \oplus T^*X) \arrow{r} & \ker(p_{T,B}^*) \arrow{r} & 0,\\
& & \hspace{2cm}(v, \alpha) \arrow[mapsto]{r} & \imath^*\alpha - \gamma^\flat(v)
\end{tikzcd}\]
and
\[\begin{tikzcd}
0 \arrow{r} & B_{S,\gamma} \cap T^*X \arrow{r} & T^*X \arrow{r} & (\im p_T + TS)^* \arrow{r} & 0
\end{tikzcd}\]
are exact.
\end{lemma}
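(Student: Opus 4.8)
The plan is to verify both sequences fiberwise, so fix $x \in S$ and write $V = T_xX$, $W = T_xS$, $L = L_{X,x} \s V \oplus V^*$, and $A = (A_{S,\gamma})_x$; thus $L$ is a Lagrangian subspace of $V \oplus V^*$ for the canonical pairing, $p_T : L \too V$ is the anchor, and $\gamma$ denotes $\gamma_x \in \wedge^2 W^*$. To lighten the bookkeeping one may first gauge $\gamma$ away via Remark~\ref{abzwgl6w} and assume $\gamma = 0$, but this is inessential, and the argument uses only that $L$ is Lagrangian, not integrability of $L_X$. The single external input I would invoke is the standard duality for Lagrangian subspaces: a covector $\xi \in V^*$ satisfies $(0,\xi) \in L$ if and only if $\xi \in (p_T(L))^\circ$, which follows at once from $L = L^\perp$.

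I would dispatch the second sequence first, since it is essentially formal. The map $T^*X \too (\im p_T + TS)^*$ is restriction of linear functionals, hence surjective with kernel $(\im p_T + TS)^\circ = (p_T(L))^\circ \cap TS^\circ$. On the other hand, an element of $A_{S,\gamma} \cap T^*X$ is some $(0,\xi) \in L_X$ with $\imath^*\xi = 0$, i.e. $\xi \in TS^\circ$ with $(0,\xi) \in L_X$; by the Lagrangian duality above this is exactly $(p_T(L))^\circ \cap TS^\circ$. This matches the kernel, giving exactness.

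For the first sequence, exactness at $A_{S,\gamma}$ is trivial, and exactness at $L_X \cap (TS \oplus T^*X)$ is immediate: the kernel of $(v,\alpha) \mapsto \imath^*\alpha - i_v\gamma$ is $\{(v,\alpha) \in L_X : v \in TS,\ \imath^*\alpha = \gamma^\flat(v)\}$, which is the definition of $A_{S,\gamma}$. Two small points remain. First, I would check that the map lands in $\ker(p_{T,A}^*) \s T^*S$, the annihilator of $p_T(A_{S,\gamma})$ inside $T^*S$: given $(v,\alpha) \in L_X$ with $v \in TS$ and any $(w,\beta) \in A_{S,\gamma}$, isotropy of $L_X$ gives $\alpha(w) + \beta(v) = 0$, and since $v, w \in TS$ this rearranges to $(\imath^*\alpha - i_v\gamma)(w) = 0$, so the image annihilates $p_T(A_{S,\gamma})$.

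The real content is surjectivity onto $\ker(p_{T,A}^*)$, which I would obtain from a dimension count rather than by constructing preimages. Writing $B = L \cap (W \oplus V^*) = p_T^{-1}(W)$ gives $\dim B = \dim(L \cap V^*) + \dim(p_T(L) \cap W)$; feeding in $\dim(L \cap V^*) = \dim V - \dim p_T(L)$, the identity $\dim(A \cap V^*) = \dim V - \dim(p_T(L) + W)$ established in the second sequence, and $\dim \ker(p_{T,A}^*) = \dim W - \dim p_T(A) = \dim W - \dim A + \dim(A \cap V^*)$, a short computation yields $\dim B - \dim A = \dim \ker(p_{T,A}^*)$. Since the first-sequence map has kernel $A$ and image contained in $\ker(p_{T,A}^*)$, this forces equality, completing exactness. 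The only real obstacle is keeping this bookkeeping consistent; as a sanity check I would test the count in the nondegenerate case $L_X = L_\omega$, where $\im p_T = TX$ and the surjectivity statement collapses to the familiar fact that $v \mapsto (i_v\omega)|_{TS}$ carries $TS$ onto $(TS^\omega)^\circ$.
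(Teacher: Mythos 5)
Your proof is correct, and it diverges from the paper's only in how surjectivity in the first sequence is established. The paper also works pointwise, gauges $\gamma$ to zero via Remark \ref{abzwgl6w}, and handles the second sequence exactly as you do (via the Lagrangian duality $(0,\xi)\in L_X \iff \xi\in(\im p_T)^\circ$). For the first sequence, however, the paper computes the image of $(v,\alpha)\mapsto\imath^*\alpha$ \emph{directly}, identifying it as $\bigl(p_{T^*}(L_X\cap(TS\oplus T^*X))+TS^\circ\bigr)/TS^\circ$ and then invoking the annihilator identities $p_{T^*}(L_X\cap(TS\oplus T^*X))=p_T(L_X\cap(TX\oplus TS^\circ))^\circ$ and $p_T(A_S)^\circ=TS^\circ+p_T(L_X\cap(TX\oplus TS^\circ))^\circ$, both consequences of $L_X$ being Lagrangian. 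You instead prove only the containment $\im\varphi\s\ker(p_{T,A}^*)$ (which you verify correctly, including the $\gamma$-correction) and then match dimensions; I checked your count and it reduces precisely to $\dim(p_T(L)+W)=\dim p_T(L)+\dim W-\dim(p_T(L)\cap W)$, so it closes. Your route has the minor advantages of avoiding the two intermediate annihilator identities and of working for arbitrary $\gamma$ without needing it to extend off $S$ (a point the paper flags as potentially delicate in the algebraic setting); the paper's route is more explicit about which covectors actually arise as images, which is arguably more informative but costs a bit more Lagrangian bookkeeping.
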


\begin{proof}
Since this is a pointwise result, by Remark \ref{abzwgl6w} we may assume that $\gamma = 0$. For the first sequence, we need to show that the map $\varphi : (v, \alpha) \longmapsto \imath^*\alpha$ is surjective. By identifying 
\[\ker(p_{T,B}) = p_T(B_S)^\circ / TS^\circ\]
and writing $p_{T^*}:L_X\too T^*X$ for the second projection, the image of $\varphi$ is 
\[(p_{T^*}(L_X \cap (TS \oplus T^*X)) + TS^\circ) / TS^\circ.\]
Since 
\[p_T(B_S)^\circ = TS^\circ + p_T(L_X \cap (TX \oplus TS^\circ))^\circ,\]
the statement now follows from the fact that
\[p_{T^*}(L_X \cap (TS \oplus T^*X)) = p_T(L_X \cap (TX \oplus TS^\circ))^\circ.\]

For the second sequence, we need to show that 
\[B_{S, \gamma} \cap T^*X = (\im p_T + TS)^\circ.\]
Note that $\alpha \in B_{S, \gamma} \cap T^*X$ if and only if $(0, \alpha) \in L_X$ and $\alpha \in TS^\circ$. By the isotropy of $L_X$, this is equivalent to $\alpha \in (\im p_T)^\circ$. 
\end{proof}

\begin{proof}[Proof of Theorem \ref{ex4wuat6}]
Suppose that $\gamma$ is a Lagrangian structure and let $L \coloneqq \Lie(\HH)$. Since $\imath^*\eta = -d\gamma$, it suffices to show that $L = B_{S, \gamma}$. The map of complexes \eqref{ldm50cv0} is then given by
\begin{equation}\label{64x0xucj}
\begin{tikzcd}[row sep=large]
L \arrow{r}{\alpha} \arrow{d} & L_{X\vert S} \oplus TS \arrow{r}{\beta} \arrow{d}{\gamma} & TX_{\vert S} \arrow{d}{\delta} \\
0 \arrow{r} & T^*S \arrow{r}{\varepsilon} & L^*,
\end{tikzcd}
\end{equation}
where the maps are defined by
\[\alpha(l) = (l, \rho_L(l)),\quad \beta(a, v) = v - \rho(a),\quad\text{and}\quad \gamma(a, v) = \imath^*\sigma(a) - \gamma^\flat(v),\]
$\delta$ is the dual of 
\[L \too L_{X\vert S} \xlongrightarrow{p_{T^*}} T^*X_{\vert S},\]
and $\varepsilon$ is the dual of the anchor map $p_{T,L}:L\too TS$.

The middle map on cohomologies $\ker \beta / \im \alpha \too \ker \varepsilon$ then reduces to
\begin{align}\label{ekskha1y}
\varphi : p_T^{-1}(TS) / L &\too \ker(p_{T,L}^*)\\
a&\mtoo \imath^*p_{T^*}(a) - \gamma^\flat(p_T(a)).\nonumber
\end{align}
The assumption $\ker \varphi = 0$ implies that $L = B_{S, \gamma}$. Therefore $S$ is a reduction level and $\HH$ is a stabilizer subgroupoid of $S$.

Conversely, suppose that $(S, \gamma)$ is a reduction level and $\HH \tto S$ is a stabilizer subgroupoid of $S$. We need to show that \eqref{64x0xucj} is a quasi-isomorphism. Since $L = B_{S, \gamma}$, Lemma \ref{19xp3wff} shows that \eqref{ekskha1y} is an isomorphism. Showing that the third map on cohomologies is an isomorphism amounts to the exactness of the sequence
\[\begin{tikzcd}
0 \arrow{r} & \im \rho + TS \arrow{r} & TX|_S \arrow{r} \arrow{r} & (\ker \rho_L)^* \arrow{r} & 0,
\end{tikzcd}\]
which is the dual of the second sequence in Lemma \ref{19xp3wff}.
\end{proof}

In the case that $Y$ is a point, Theorem \ref{0rue6qjh} then also follows from the observation that
\[M \sll{S} \G =  [S/\HH] \times_{[X / \G]} [M/\G].\]
Moreover, note that even in the absence of smoothness conditions we can still interpret $M \sll{S} \G$ as a 0-shifted symplectic derived stack.

\begin{remark}\label{8fwv7gf8}
The results of this section can be generalized to degenerate Dirac manifolds via coisotropic intersections in shifted Poisson geometry \cite{cptvv}.
Namely, a Hamiltonian action of a quasi-symplectic groupoid $\G \tto X$ on a Dirac manifold $(M, L_M)$ (in the sense of \cite{bur.cra:05}) gives a 1-shifted coisotropic structure on $[M/\G] \to [X/\G]$.
The intersection $M \sll{S} \G = [S/\HH] \times_{[X/\G]} [M/\G]$ is then 0-shifted Poisson \cite{melani-safranov-1,melani-safranov-2}.
More generally, if $(M, L_M)$ is acted on by $\G_X \times \G_Y$, then $M \sll{S} \G_X$ is a Hamiltonian $\G_Y$-space.
See \cite{mayrand:2025} for details.
\end{remark}

%
%
%
%
%
%
%
\bibliographystyle{plain}
\bibliography{biblio}

\end{document}